\definecolor{dgreen}{rgb}{0.0, 0.5, 0.0}
\definecolor{byzantium}{rgb}{0.44, 0.16, 0.39}
\newtheorem{prop}{Proposition}
\newtheorem{theorem}{Theorem}
\newtheorem{hyp}{Hypothesis}
\newtheorem{lemma}{Lemma}
\newtheorem{rem}{Remark}[section]
\newtheorem{definition}{Definition}
\newenvironment{lemma_appendix}[1]
 {\taggedlemma}
 {\endtaggedlemma}
\DeclareRobustCommand\widecheck[1]{{\mathpalette\@widecheck{#1}}}
\def\@widecheck#1#2{%
    \setbox\z@\hbox{\m@th$#1#2$}%
    \setbox\tw@\hbox{\m@th$#1%
       \widehat{%
          \vrule\@width\z@\@height\ht\z@
          \vrule\@height\z@\@width\wd\z@}$}%
    \dp\tw@-\ht\z@
    \@tempdima\ht\z@ \advance\@tempdima2\ht\tw@ \divide\@tempdima\thr@@
    \setbox\tw@\hbox{%
       \raise\@tempdima\hbox{\scalebox{1}[-1]{\lower\@tempdima\box
\tw@}}}%
    {\ooalign{\box\tw@ \cr \box\z@}}}
\def\N {\mathbb{N}}
\def\R {\mathbb{R}}
\newcommand\pare[1]{\left(#1\right)}
\newcommand\croch[1]{\left[#1\right]}
\newcommand{\sumi}{\sum_{i=1}^N}
\newcommand{\sumj}{\sum_{j=1}^N}
\newcommand{\elts}{\{1,\cdots,N\}}
\renewcommand{\P}{\mathcal{P}}
\newcommand{\E}{\mathbb{E}}
\newcommand{\one}{\mathbb{1}}
\newcommand{\schema}[1]{\b{\sc #1}}
\newcommand{\xij}{\xi_{ij}}
\newcommand{\uin}{u_i^N}
\newcommand{\ujn}{u_j^N}
\newcommand{\uine}{u_i^{N,\varepsilon}}
\newcommand{\ujne}{u_j^{N,\varepsilon}}
\newcommand{\uinei}{\tilde{u}_i^{N,\varepsilon,k}}
\newcommand{\ujnei}{\tilde{u}_j^{N,\varepsilon,k}}
\newcommand{\uina}{u_i^{N,\text{Av}}}
\newcommand{\ujna}{u_j^{N,\text{Av}}}
\newcommand{\vin}{v_i^N}
\newcommand{\vjn}{v_j^N}
\newcommand{\dsp}{\displaystyle}
\newcommand{\wijn}{W_{ij}^N}
\newcommand{\fijn}{f_{ij}^N}
\newcommand{\wijk}{W_{ik}^N}
\newcommand{\fijk}{f_{ik}^N}
\newcommand{\bw}{\bar{w}}
\newcommand{\V}{\mathbb{V}}
\newcommand{\PP}{\mathbb{P}}
\newcommand{\sz}{\sigma_Y}
\newcommand{\talpha}{\tilde{\alpha}_N}
\newcommand{\tgamma}{\tilde{\gamma}_N}
\newcommand{\tZ}{\tilde{Z}}
\newcommand{\tY}{\tilde{Y}}
\definecolor{carnelian}{rgb}{0.7, 0.11, 0.11}
\definecolor{crimsonglory}{rgb}{0.75, 0.0, 0.2}
\definecolor{darkraspberry}{rgb}{0.53, 0.15, 0.34}
\definecolor{darkmagenta}{rgb}{0.55, 0.0, 0.55}
\begin{document}

\title{Graph Limit for Interacting Particle Systems on Weighted Random Graphs}

\author{
Nathalie Ayi\thanks{Sorbonne Universit\'e, CNRS, Universit\'e Paris Cit\'e, Inria, Laboratoire Jacques-Louis Lions (LJLL), F-75005 Paris, France} 
 \and
Nastassia Pouradier Duteil\thanks{Sorbonne Universit\'e, Inria, CNRS, Universit\'e Paris Cit\'e, Laboratoire Jacques-Louis Lions (LJLL), F-75005 Paris, France} }

\maketitle

\abstract{In this article, we study the large-population limit of interacting particle systems posed on weighted random graphs. In that aim, we introduce a general framework for the construction of weighted random graphs, generalizing the concept of graphons. We prove that as the number of particles tends to infinity, the finite-dimensional particle system converges in probability to the solution of a deterministic graph-limit equation, in which the graphon prescribing the interaction is given by the first moment of the weighted random graph law. 
We also study interacting particle systems posed on switching weighted random graphs, which are obtained by resetting the weighted random graph at regular time intervals. We show that these systems converge to the same graph-limit equation, in which the interaction is prescribed by a constant-in-time graphon. 
}

\section*{Introduction}

Models for interacting particle systems provide a general framework to study any population of agents (also referred to as ``particles'') interacting pairwise, and generally giving rise to collective behavior without any centralized intelligence. These models can be refined and adapted to many applications, as varied as animal behavior \cite{CKFL05,CS07}, opinion formation \cite{HK,BST22}, cell movement \cite{BDPZ20}, crowd motion \cite{MV11}, etc.

Interacting particle systems can broadly be grouped into two categories: models for indistinguishable particles, and models for non-indistinguishable (or non-exchangeable) particles \cite{AyiPou}.
In the former, the interaction between particles is exclusively based on their position in the state space, so that exchanging two particles has no effect on the overall dynamics. In the latter, particles are tagged with a specific identity, and their interaction also depends on an underlying interaction graph: they are non-exchangeable.

In this article, we aim to derive the large-population limit of coupled dynamical systems posed on \textit{random weighted graphs}, of the form 
\begin{equation} \label{eq:gensyst}
\frac{d}{dt} \uin(t) = \frac{1}{N} \sum_{j=1}^N  \xi_{ij} D(\ujn(t) - \uin(t)),~~~~ i \in \{1, \dots, N\}.
\end{equation}
Here, the time-evolving variables $(\uin)_{i\in\elts}\in \R^N$ can represent oscillator frequencies as in the Kuramoto model \cite{Kuramoto84, MedvedevR}, opinions as in the Hegselmann-Krause model \cite{HK}, neuron potentials \cite{Hoppensteadt97}, positions, or any other quantity likely to evolve based on an underlying graph. 
The interaction function $D$ regulates the interaction due to the values of $\uin$ and $\ujn$ at time $t$, \textit{independently of the underlying network}. The network's role, on the other hand, is encoded in the weights $(\xi_{ij})_{i,j\in\elts}\in(\R_+)^N$, which depend exclusively on the nodes $i$ and $j$, and not on the values of the evolving variable. 


The relevance of such dynamical systems posed on random interaction networks has been pointed out in many applications. 
For instance, Watts and Strogatz have introduced a model for a ``small-world'' network, that they apply to systems as varied as the \textit{C. elegans} neural network, the US power grid or the social structure of the movie actors community  \cite{WS98}.
 The construction procedure involves connecting each node with its closest neighbors to form a ring lattice, and then rewiring each edge at random with probability $p$.
The constructed network reflects the well-known ``small-world'' property according to which each individual has a small probability to be connected with another individual supposedly outside its circle. 

Because of their large size, dynamical systems of the form \eqref{eq:gensyst} are generally difficult to analyze. However, it has been shown in many cases that their continuum limit (as $N$ goes to infinity) often provides a good approximation of their behavior \cite{WileyStrogatzGirvan06}. 
The limit of such a system posed on an underlying deterministic graph was derived in \cite{Medvedev14} in the graph limit framework, and in  \cite{JabinPoyatoSoler21, KuehnXu22} in the mean-field framework.
In \cite{MedvedevR}, Medvedev derived the limit of \eqref{eq:gensyst} when $N$ goes to infinity, in the case of a $W-$random graph in which the weights take random values in $\{0,1\}$, signifying the presence ($\xi_{ij}=1$) or absence ($\xi_{ij}=0$) of edges. Given a graphon $W\in L^\infty([0,1]^2;[0,1])$, the probability for the presence or absence of each edge is assumed to satisfy $\PP[\xi_{ij}=1]=1-\PP[\xi_{ij}=0]=W(x_i,x_j)$, where $x_i, x_j$ are deterministic or random variables that depend respectively on $i$ and $j$.
The limiting equation is the nonlinear heat equation
\begin{equation}\label{eq:graph-limit_W_intro}
\partial_t u(t,x) = \int_I W(x,y) D(u(t,y)-u(t,x)) \, dy ,
\end{equation}
in which the graphon $W$ now plays the role of \textit{deterministic} edge weights between the (continuous) vertices indexed by $x$ and $y$. Thus, the limit of a dynamical system posed on a \textit{random unweighted} graph is a nonlinear heat equation posed on a \textit{deterministic weighted} graphon: the stochasticity of the system is lost as $N$ goes to infinity.

However, as remarked in \cite{BarratBarthelemyPastorVespignani04}, binary (or unweighted) graphs, in which each edge is either present or absent, lack essential features needed to fully capture the complexity of a real network. This motivated the introduction of \textit{weighted graphs} in which each edge is assigned a weight proportional to the intensity of the connection between the nodes it links. The worldwide airport network is an example of a weighted graph in which the nodes model the cities containing an airport, the edges model the existing direct-flight connections between any two cities, and the edge weights model the intensity of these connections (for instance in number of seats per year) \cite{BarratBarthelemyPastorVespignani04}. Many real-life networks are in fact better modeled by weighted graphs, where the weights can represent communication frequencies or intensities. It then seems natural to couple the notions of \textit{random graphs} and \textit{weighted graphs} to define \textit{weighted random graphs}.

A \textit{weighted graph} of size $N$, denoted by $G_N = \langle V(G_N), E(G_N), W(G_N)\rangle$, is composed of $N$ vertices and of a maximum of $N^2$ edges. With this notation, $V(G_N)=\elts$ denotes the set of vertices, $E(G_N)\subset\elts^2$ the set of edges, and $W(G_N)$ the set of edge weights, considered to be non-negative. 
For weighted graphs, one can always consider that the number of edges is exactly equal to the maximal number $N^2$, by attributing zero-weights to some edges, as follows: 
$$(W(G_N))_{ij} := \left\{
\begin{array}{ll}
\dsp{\xi_{ij}}\in \R^*_+ ,&  \text{ if } (i,j) \in E(G_N)\\
~\\
\dsp{0} &\text{ otherwise.}
\end{array} \right. $$
A weighted graph is said to be a \textit{weighted random graph} if the edge weights are attributed randomly. 

In \cite{Garlaschelli_2009}, Garlaschelli introduced a weighted random graph model in which the probability of drawing an edge of discrete weight $w\in \N$ between vertices $i$ and $j$ is given by $\PP[\xi_{ij}=w]:=q_{ij}(w) = (y_i y_j)^w (1-y_i y_j)$, where $(y_i)_{i\in\elts}$ tune the expected stength of the vertices. In \cite{FossKonstantopoulos18}, a general model was introduced permitting to assign random weights to the egdes and to the vertices, so that all edge weights are i.i.d. random variables.
In \cite{AminiLelarge15}, the authors introduce and study a weighted random graph in which all edge weights are i.i.d., following the law of an exponential random variable.

Although specific examples of \textit{weighted random graphs} such as the few above can be found in the literature, to the best of our knowledge there lacks a general framework encompassing all models into one.
In the present article, we propose a general framework for \textit{weighted random graphs}, able to encompass specific examples such as those given in \cite{AminiLelarge15, FossKonstantopoulos18,Garlaschelli_2009, WS98} as well as unweighted random graphs (which can be seen as weighted random graphs with weights belonging to $\{0,1\}$).
Notably, our framework allows for non identically distributed edges.

In this aim, we define a \textit{weighted random graph law} $q:[0,1]\times [0,1] \rightarrow \mathcal{P}(\R_+)$, i.e. a function taking values in $[0,1]^2$, such that for every $(x,y)\in [0,1]^2$, $q(x,y;\cdot)$ is a probability measure.
Similarly to the approach in \cite{MedvedevR}, we can then define a $q-$weighted random graph of size $N$ in two ways, that differ from one another in their level of randomness:
\begin{enumerate}
\item[(r-r)] Given a sequence of i.i.d. random variables $(X_i)_{i\in \N}$ uniformly distributed in $[0,1]$, for each edge $(i,j)$, we randomly select the weight $\xi^N_{ij}$ with probability $q(X_i,X_j;\cdot)$.
\item[(r-d)] Given a deterministic sequence $(x^N_i)_{i\in\elts}$ satisfying $x^N_i\in [\frac{i-1}{N}, \frac{i}{N})$ for all $i$, we randomly select the value of $\xi^N_{ij}$, with probability $q(x^N_i,x^N_j;\cdot)$.
\end{enumerate}
The approach (r-r) (standing for random-random) requires randomness in both steps: that of randomly attributing the random variables $X_i$ to each node $i$, and that of randomly selecting the weights $\xi_{ij}$ from the law $q(X_i,X_j;\cdot)$. It is the approach presented in \cite{MR2274085}.
The approach (r-d) (for random-deterministic) requires randomness only in the second step, and has been presented in \cite{MedvedevR}. The advantage of the second one is that it is convenient for applications, since as mentioned in \cite{MedvedevR}, it can often easily be linked  to  existing random graph models (as for instance the interpretation of the ``small-world'' graph as a $W$-random graph generated by a deterministic sequence).

Figure \ref{Fig:Intro_SmallWorld_matrices} shows pixel representations of random matrices $(\xi_{ij})_{i,j\in\elts}$ corresponding to a weighted version of the small-world network (in the case (r-d)). The right-most plot shows the limit graphon $(x,y)\mapsto \bw(x,y):= \int_{\R_+} w \, q(x,y; dw) $.

\begin{figure}[h!]
\centering
\includegraphics[width = 0.32\textwidth, trim = 0cm 0cm 0cm 0cm, clip=true]{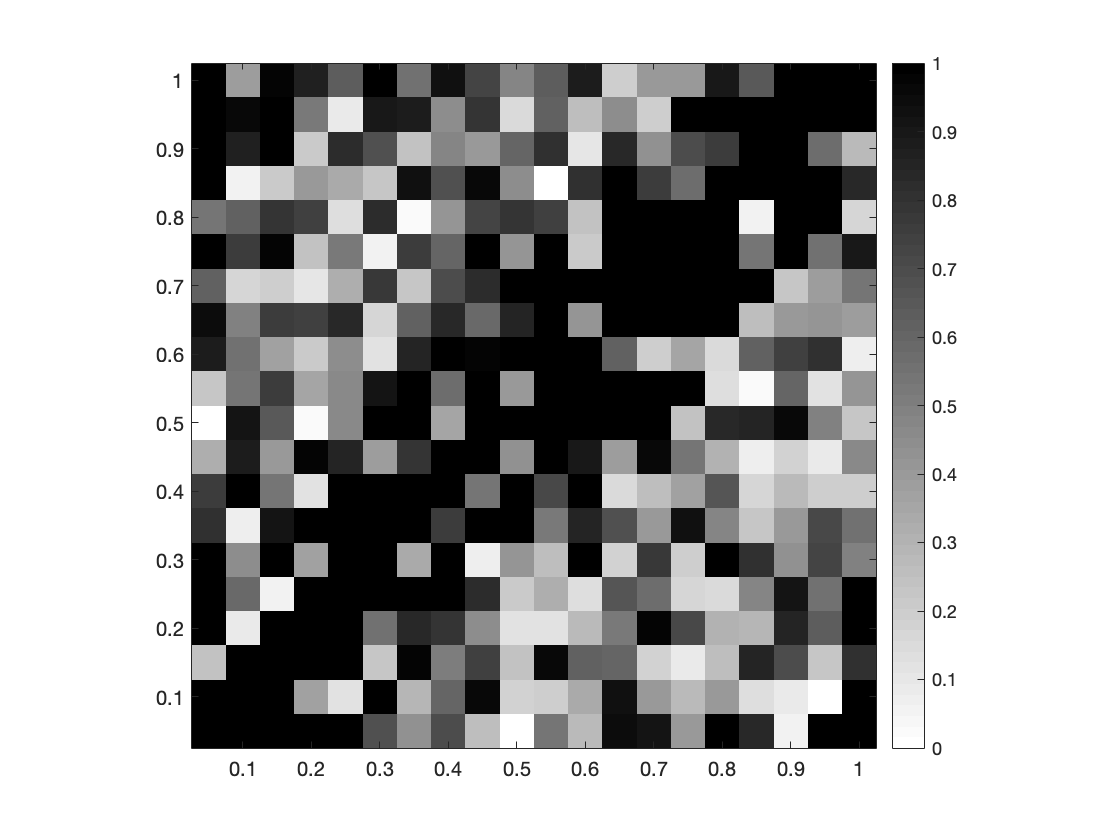}
\includegraphics[width = 0.32\textwidth, trim = 0cm 0cm 0cm 0cm, clip=true]{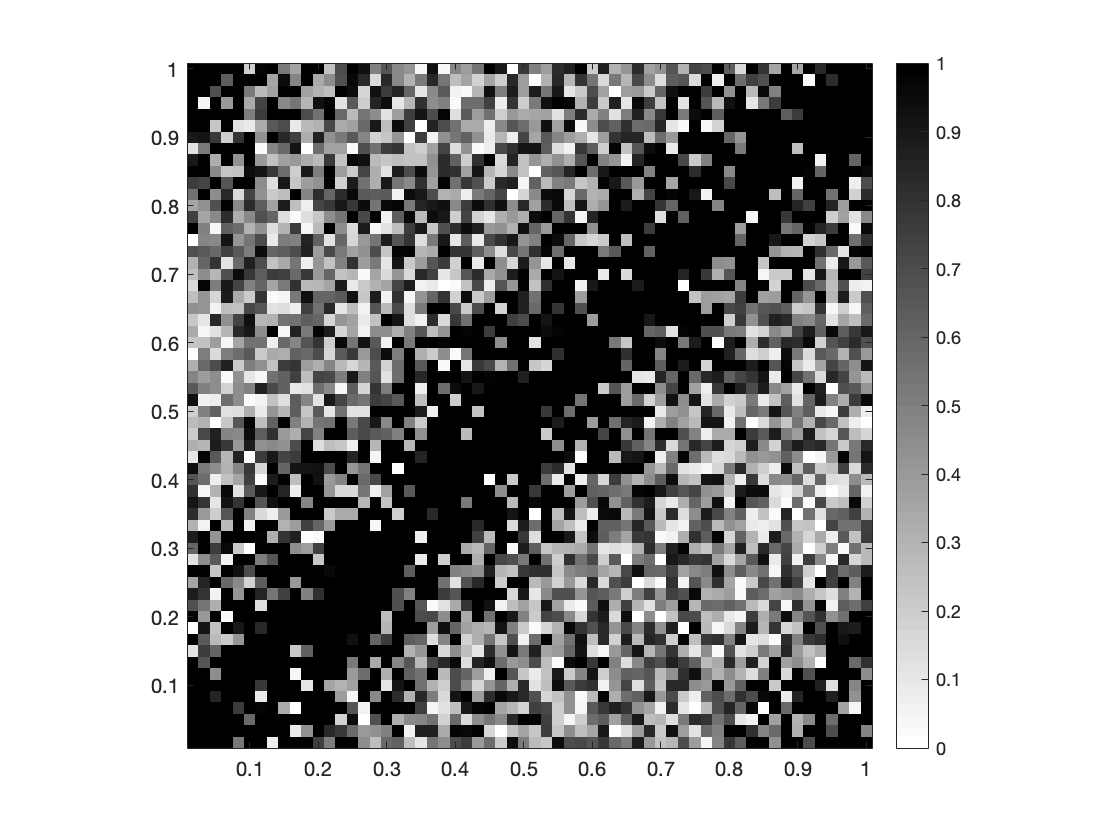}
\includegraphics[width = 0.32\textwidth, trim = 0cm 0cm 0cm 0cm, clip=true]{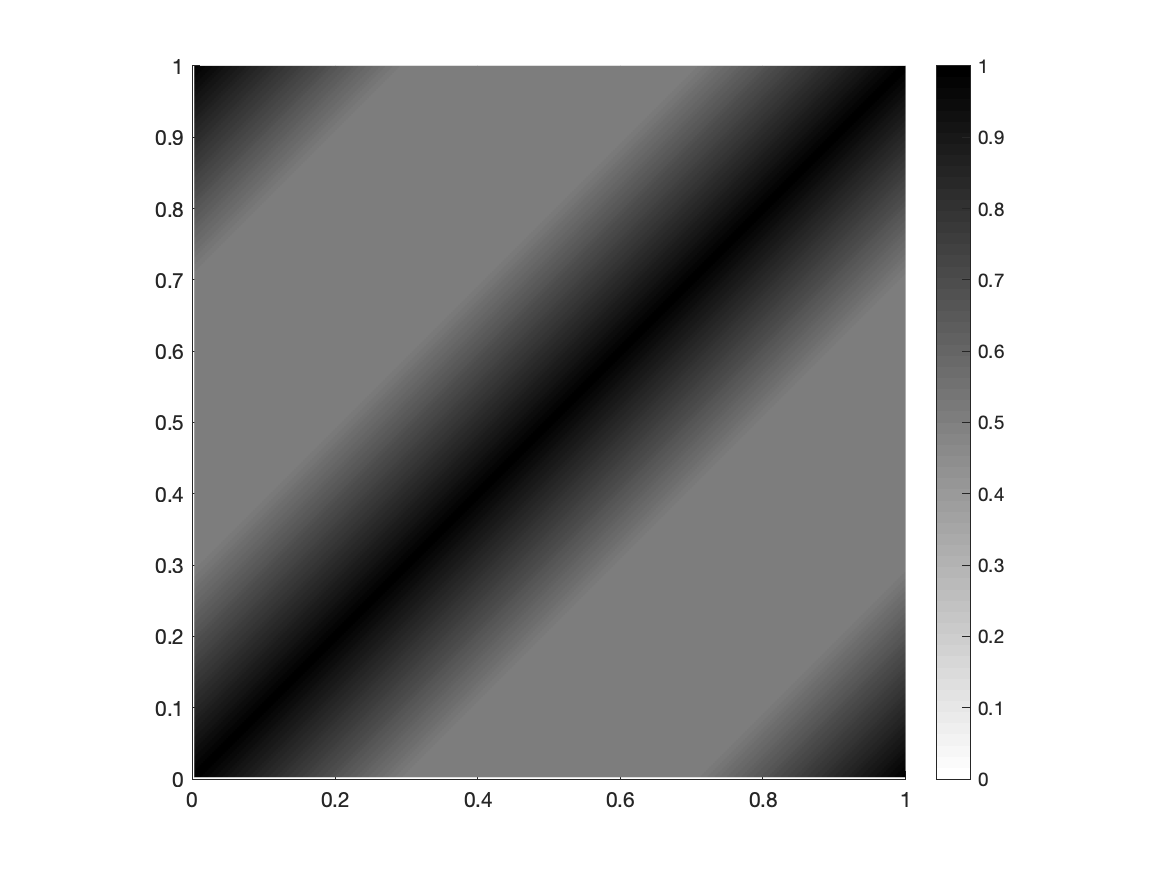}
\caption{Values of the random interaction matrices generated from a deterministic sequence according to the Small-World weighted random graph law \eqref{eq:Example_SmallWorld} for $N=20$ and $N=60$. Right: Corresponding continuous graphon $(x,y)\mapsto \bw(x,y):= \int_{\R_+} w \, q(x,y; dw) $. }
\label{Fig:Intro_SmallWorld_matrices}
\end{figure} 

In each of these cases, we will prove that as $N$ tends to $+\infty$, the microscopic system \eqref{eq:gensyst} converges in probability to the solution of the following graph limit equation: 
\begin{equation}\label{eq:graph-limit_intro}
\partial_t u(t,x) = \int_I \left(\int_{\R_+} w \, q(x,y; dw) \right) D(u(t,y)-u(t,x)) \, dy ,
\end{equation}
which is a \textit{deterministic} integro-differential equation, in which the weight of the edge $(x,y)$ is given by the expected value of the weighted random graph law $q(x,y;\cdot)$. 
More precisely, if the probability measures $q(x,y;\cdot)$ have uniformly bounded first four moments, we prove that on any finite time interval $[0,T]$, the solutions $u^N$ to the discrete system \eqref{eq:gensyst} and the solution $u$ to the integro-differential equation~\eqref{eq:graph-limit_intro} satisfy
\begin{equation*}
 \mathbb{P} \croch{ \sup_{t \in [0,T]}  \|u^N(t) - \mathbf{P}_{\tilde{X}^N} u(\cdot,t) \|_{2,N} \geq \frac{C(T)}{\sqrt{N}}} \leq \frac{\tilde{C}}{N},
\end{equation*}
where the constants can be computed explicitely. This quantitative result is obtained in both cases (r-r) (Theorem \ref{th:graph_limit}) and (r-d) (Theorem \ref{th:deterministic}), but in the latter, additional regularity assumptions are required on the initial data, the weighted random graph law $q$ and the interaction function $D$.

We then focus our attention on time-dependent weighted random networks. 
Again, this consideration is motivated by practical applications \cite{belykh_SW}. In neural networks, neurons interact via electric spikes that take place only intermittently. In packet switching technology (such as the Internet), data channels are occupied only during the transmission of information packets. Both are examples of ``blinking networks'', in which connections are sporadic.
In \cite{belykh_SW}, Belykh et al introduced the so-called ``blinking model'' in the context of a small-world network composed of a regular locally coupled lattice, to which is superimposed a time-varying random small-world network. The varying small-world network is obtained by randomly switching on or off some new shortcuts on the graph every lapse of time $\varepsilon$.
Since then, it has been studied for many dynamical systems on graphs (as in \cite{belykh_kuramoto,FaggianGinelliRosasLevnajic19} for the Kuramoto oscillators models for instance). 
It has been proven that when the ``blinking'' time is small compared to the characteristic synchronization time, a few random shortcut additions significantly lower the synchronization threshold of the system. 

We extend this concept to the framework of weighted random graphs. 
Applying results from Averaging theory, we show that for a given $N$, when the ``blinking'' time tends to zero, the blinking system tends to a system in which the communication weights are constant, and given by the expected values of the blinking weights.
We then prove that the continuous limit (as $N \varepsilon$ goes to infinity) of the finite-dimensional blinking system is the same graph limit equation \eqref{eq:graph-limit_intro}, obtained as the limit of non-blinking systems.

The article is organized as follows. After defining the framework of weighted random graphs in Section \ref{Sec:Prelim}, we prove the convergence of the coupled dynamical system \eqref{eq:gensyst} towards its graph limit \eqref{eq:graph-limit_intro} in the context of random graphs generated by random (Section \ref{Sec:Random}) or deterministic (Section \ref{Sec:Deter}) sequences.
Section \ref{Sec:blinking} is devoted to the study of the interplay between the blinking time and the number of agents in dynamical systems evolving on blinking weighted random graphs. We illustrate our results with numerical simulations in Section \ref{Sec:Sim}.


\section{Notations and preliminary concepts}\label{Sec:Prelim}

In this paper, we will use the notation $I:=[0,1]$. $\P(\R_+)$ denotes the set of probability measures with support in $\R_+$. We will denote by $\E$ and $\V$ the expectation and variance of random variables.

In everything that follows, $D:\R\rightarrow\R$ will denote the interaction function, and $g:I\rightarrow\R$ will provide the initial condition of both microscopic and macroscopic systems. 
We will make the following technical assumptions:
\begin{hyp}\label{hyp:D}
Let $D\in L^\infty(\R)$ be bounded 
and Lipschitz continuous, with $\|D\|_{L^\infty(\R)}:=K$ and $\|D'\|_{L^\infty(\R)}:=L$. 
\end{hyp}

In the seminal paper \cite{LS}, a procedure was introduced to construct \textit{unweighted} random graphs from a limiting object named graphon. Formally, a graphon is a measurable function 
\[
\begin{array}{rccl}
W :& I\times I &\rightarrow& [0,1]\\
  &(x,y)&\mapsto& W(x,y).
\end{array}
\]
Using the graphon $W$, a $W$-random graph can then be constructed in two ways \cite{VGF21}: 
\begin{enumerate}
\item[(r-r)] $W$-random graph generated by a random sequence: given a sequence of i.i.d. random variables $(X_i)_{i\in \N}$ uniformly distributed in $[0,1]$, an edge between the nodes $i$ and $j$ is attributed with probability $W(X_i,X_j)$.
\item[(r-d)] $W$-random graph generated by a deterministic sequence: given a sequence $(x_i)_{i\in \N}$ satisfying for all $i\in\elts$, $x_i\in [\frac{i-1}{N}, \frac{i}{N})$, an edge between the nodes $i$ and $j$ is attributed with probability $W(x_i,x_j)$.
\end{enumerate}

We introduce the concept of \textit{weighted random graph law}, which will allow us to generalize $W$-random graphs to $q$-weighted random graphs. Since it will be central to all that follows, we define it here.
\begin{definition}
A \textbf{weighted random graph law} is a function
$$
\begin{array}{llll}
q :&I \times I &  \to & \mathcal{P}(\mathbb{R_+})\\
 & (x,y) &\mapsto & q(x,y;\cdot).
\end{array}$$
\end{definition}
In this article, we will only consider weighted random graph laws with uniformly bounded first four moments, requiring the following:
\begin{hyp}
\label{hypo_moment}
There exists $M>0$ such that for all $(x,y) \in I^2$, for all $k\in\{1,\cdots,4\},$ 
\begin{equation}\label{eq:moments}
\left( \int_{\mathbb{R}_+} w^k q(x,y;dw)\right)^{1/k} \leq M,
\end{equation}
i.e. the first four moments of the probability measure $q(x,y;\cdot)$ are bounded uniformly in $x$ and $y$.
\end{hyp} 
From here onwards, for all $x,y\in I$, we denote by $ \bw(x,y) := \int_{\mathbb{R_+}}  w q(x,y;dw)$ the first moment of the probability distribution $q(x,y;\cdot)$.

Similarly to the construction of $W$-random graphs, we will also consider two sampling methods to construct a $q$-weighted random graph. 
The first one, in which the graph is generated from a sequence of independent uniformly distributed random variables $(X_i)_{i\in \N}$, is introduced in Section \ref{Sec:Random}.
The second one, in which the graph is generated from a deterministic sequence of  points $(x_i)_{i\in \N}$, is introduced in Section \ref{Sec:Deter}.

Given $\dsp (x^N)_{i\in\elts} :=\{x_1^N, x_2^N, \dots, x_N^N\}$ a set of distincts points in $I$, for all $t\in [0,T]$, we denote the projection of a function $f\in C(I)$ onto $x^N$ by the evaluation of $f$ at these points, i.e.
\begin{equation}
\mathbf{P}_{x^N} f = (f(x_1^N), f(x_2^N), \dots, f(x_N^N)) \subset \R^N,
\end{equation}
which is a vector of $\R^N$.
For such vectors $u,v\in\R^N$, we use the weighted Euclidean inner product
\begin{equation*}
(u,v)_N := \frac{1}{N} \sum_{i=1}^N u_i v_i, \quad u=(u_1,u_2, \dots,u_N)^T,  v=(v_1,v_2, \dots,v_N)^T
\end{equation*}
and the corresponding norm $\|u\|_{2,N} := \sqrt{(u,u)_N}$. 


\section{Networks on weighted random graphs generated by random sequences}\label{Sec:Random}

Denote $X = (X_1, X_2, X_3, \dots )$ and $X^N = (X_1, X_2, \dots, X_N)$ where $X_i, i \in \mathbb{N}$ are independent identically distributed (i.i.d.) random variables. We suppose that $X_1$ follows the law of a uniform random variable on $I$, i.e. $\mathcal{L}(X_1) = \mathcal{U}(I)$. We define general \textit{weighted random graphs}, which generalize the study of particular weighted (or unweighted) random graph models like in \cite{AminiLelarge15,Garlaschelli_2009,LS, WS98}.

\begin{definition}
Let
$$
\begin{array}{llll}
q :&I \times I &  \to & \mathcal{P}(\mathbb{R_+})\\
 & (x,y) &\mapsto & q(x,y;\cdot)
\end{array}$$
be a weighted random graph law.
A \textbf{$q$-weighted random graph} on $N$ nodes \textbf{generated by the random sequence $X$}, denoted $G_N$, is such that the weight of each edge $(i,j), i \neq j,$ of $G_N$ is randomly attributed, and its law is $q(X_i,X_j;\cdot)$. \\
The decision of the attribution of the weight of a pair $(i,j) \in \{1, \dots, N\}^2, i \neq j$ is made independently from the decision for other pairs.
\end{definition}

\begin{rem}\label{rem:examples}
This general definition of $q$-weighted random graphs encompasses many specific examples, including unweighted random graphs.
\begin{itemize}
\item \textbf{$W$-random graphs} \cite{LS}: given a graphon $W:I^2\rightarrow [0,1]$, an (unweighted) $W$-random graph can be defined with the Bernoulli weighted random graph law: $q(x,y,\cdot)=W(x,y)\delta_1 + (1-W(x,y))\delta_0$
\item Weighted Erdös-Rényi random graph: the model of Garlaschelli \cite{Garlaschelli_2009} can be recovered with the weighted random graph law : $q(x,y;\cdot) =  \dsp (1-\frac{xy}{2}) \sum_{i=0}^{+\infty}(\frac{xy}{2})^i\delta_i$.
\item Weighted random graph with i.i.d. exponential weights \cite{AminiLelarge15}:
$q(x,y;dw) =  \index{w\geq 0} \lambda e^{-\lambda w} dw$
\end{itemize} 
\end{rem}

Given $(X_i)_{i\in\elts}$ a sequence of i.i.d. random variables satisfying $\mathcal{L}(X_1) = \mathcal{U}(I)$, we consider the system of differential equations
\begin{equation}\tag{$\mathcal{S}_N^\mathrm{r-r}$}
\label{eq:ODEs}
\begin{cases}
\displaystyle \frac{d}{dt} \uin(t) = \frac{1}{N} \sum_{j=1}^N  \xi_{ij} D(\ujn(t) - \uin(t)),\\ 
\uin(0)  =g(X_i^N),~~~~   i \in \{1, \dots, N\}
\end{cases}
\end{equation}
where for all $(i,j)\in\elts^2$, $\mathcal{L}(\xi_{ij}| {X}) = q(X_i,X_j;\cdot)$.
We denote this system \eqref{eq:ODEs} due to its twice random nature: for each pair $(i,j)\in\elts^2$, the weight $\xi_{ij}$ is \textit{randomly} attributed from the law $q(X_i,X_j;\cdot)$, generated by the \textit{random} variables $X_i$ and $X_j$.
Our goal will be to prove the convergence of the microscopic system \eqref{eq:ODEs} towards the continuum limit
\begin{equation}\tag{C}
\begin{cases}
\label{eq:graph_limit}
\partial_t u(x,t)  = \dsp \int_I \left( \int_{\mathbb{R_+}}  w q(x,y;dw)\right) D(u(y,t)-u(x,t))dy\\
u(x,0)   =  g(x),  ~~~x \in I,
\end{cases}
\end{equation}
in a sense that we will clarify.

We start by recalling the well-posedness of the limit equation \eqref{eq:graph_limit}. 
\begin{theorem}
Suppose that $D$ is Lipschitz continuous, that there exists $M>0$ such that for all $ (x,y) \in I^2$, $\displaystyle \int_{\R_+} w q(x,y;dw) \leq M$ and that $g \in L^\infty(I)$. 
Then, for any $T>0$, there exists a unique solution $u \in \mathcal{C}^1([0,T];L^\infty(I))$ to \eqref{eq:graph_limit}.
\end{theorem}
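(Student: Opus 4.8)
The plan is to treat \eqref{eq:graph_limit} as an abstract ODE in the Banach space $L^\infty(I)$ and apply the Picard–Lindelöf (Cauchy–Lipschitz) theorem. Define the operator $\mathcal{F}: L^\infty(I) \to L^\infty(I)$ by
\begin{equation*}
\mathcal{F}[v](x) := \int_I \bw(x,y)\, D(v(y)-v(x))\, dy,
\end{equation*}
where $\bw(x,y) = \int_{\R_+} w\, q(x,y;dw)$. First I would check that $\mathcal{F}$ maps $L^\infty(I)$ into itself: since $|\bw(x,y)| \leq M$ for all $(x,y)$ by hypothesis and $D$ is bounded on $\R$ (a Lipschitz function on $\R$ that we evaluate only at bounded arguments is bounded on the relevant range, or one uses Hypothesis \ref{hyp:D} directly), we get $\|\mathcal{F}[v]\|_{L^\infty(I)} \leq M\|D\|_{L^\infty}$, hence $\mathcal{F}[v]\in L^\infty(I)$; measurability in $x$ follows from Fubini/Tonelli and measurability of $\bw$.

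Next I would establish that $\mathcal{F}$ is globally Lipschitz on $L^\infty(I)$. For $v_1, v_2 \in L^\infty(I)$,
\begin{equation*}
|\mathcal{F}[v_1](x) - \mathcal{F}[v_2](x)| \leq \int_I \bw(x,y)\, \big|D(v_1(y)-v_1(x)) - D(v_2(y)-v_2(x))\big|\, dy,
\end{equation*}
and using the Lipschitz bound on $D$ with constant $L$ together with $|\bw(x,y)|\leq M$,
\begin{equation*}
|\mathcal{F}[v_1](x)-\mathcal{F}[v_2](x)| \leq M L \int_I \big(|v_1(y)-v_2(y)| + |v_1(x)-v_2(x)|\big)\, dy \leq 2ML\,\|v_1-v_2\|_{L^\infty(I)}.
\end{equation*}
Taking the supremum over $x$ gives $\|\mathcal{F}[v_1]-\mathcal{F}[v_2]\|_{L^\infty(I)} \leq 2ML\,\|v_1-v_2\|_{L^\infty(I)}$, so $\mathcal{F}$ is globally Lipschitz. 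Since $\mathcal{F}$ is autonomous (no explicit $t$-dependence) and globally Lipschitz, the Cauchy–Lipschitz theorem in the Banach space $L^\infty(I)$ yields, for the initial datum $g \in L^\infty(I)$, a unique global-in-time solution $u \in \mathcal{C}^1([0,T];L^\infty(I))$ of $\frac{d}{dt}u(t) = \mathcal{F}[u(t)]$, $u(0)=g$, for every $T>0$; global existence is automatic from the global (rather than merely local) Lipschitz bound, with no blow-up. That the abstract solution coincides with a solution of \eqref{eq:graph_limit} in the stated pointwise-in-$x$ sense is immediate from the definition of $\mathcal{F}$.

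The only genuinely delicate point — and the step I would expect to need the most care — is the measurability and Banach-space-valued regularity bookkeeping: one must verify that $x \mapsto \mathcal{F}[v](x)$ is measurable (so that $\mathcal{F}$ indeed takes values in $L^\infty(I)$ and not merely in the bounded functions), which requires $\bw$ to be jointly measurable on $I^2$; this follows because $q$ is a weighted random graph law and the first moment of a measurably-parametrized family of measures is measurable. One should also note that $\mathcal{C}^1([0,T];L^\infty(I))$ regularity of $u$ in time is a direct consequence of the fixed-point construction together with continuity of $t\mapsto \mathcal{F}[u(t)]$, the latter following from the Lipschitz estimate above and continuity of $t \mapsto u(t)$. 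Everything else is a routine application of the contraction mapping principle, so I would keep that part brief.
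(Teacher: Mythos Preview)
Your argument is correct and is essentially the standard Picard--Lindel\"of/contraction-mapping proof one would expect here. The paper itself does not give a proof of this theorem at all: it simply states ``We refer the reader to \cite{Medvedev14} for a proof in a more general framework,'' so there is no in-paper proof to compare against. Your proposal is in the spirit of that reference and would be a fine self-contained substitute.
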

We refer the reader to \cite{Medvedev14} for a proof in a more general framework.
Our main result can then be stated as follows:

\begin{theorem}
\label{th:graph_limit}
Let $D$ satisfy Hyp. \ref{hyp:D}, let $g \in L^\infty(I)$ and let $q$ be a weighted random graph law satisfying Hyp.~\ref{hypo_moment}.
Then, as $N$ goes to infinity, the solution $u^N$ to the discrete system \eqref{eq:ODEs} converges to the solution $u$ of the continuous model \eqref{eq:graph_limit}. More precisely, 
\begin{equation}\label{eq:conv1}
 \mathbb{P} \croch{ \sup_{t \in [0,T]}  \|u^N(t) - \mathbf{P}_{\tilde{X}^N} u(\cdot,t) \|_{2,N} \geq \frac{C_1(T)}{\sqrt{N}}} \leq \frac{\tilde{C}_1}{N}
\end{equation}
where the constants $C_1(T)$ and $\tilde{C}_1$ are respectively defined by
$C_1(T):= \sqrt{T} \sqrt{1 + M^2 K^2 }e^{(\frac{1}{2}+4ML)T}$ and
$\tilde{C}_1 := 3 M^4 K^4+ 6$.
\end{theorem}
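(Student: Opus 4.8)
\medskip

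\noindent The plan is to prove the convergence in two stages, inserting an intermediate ``averaged'' system between the doubly-random system \eqref{eq:ODEs} and the continuum equation \eqref{eq:graph_limit}. First I would introduce, for the given realization of $X = (X_i)_{i\in\N}$, the auxiliary finite-dimensional system in which the random weights $\xi_{ij}$ are replaced by their conditional expectations $\bw(X_i,X_j) = \int_{\R_+} w\, q(X_i,X_j;dw)$; call its solution $v^N$, with the same initial data $v_i^N(0) = g(X_i^N)$. The triangle inequality then splits the error:
\begin{equation*}
\|u^N(t) - \mathbf{P}_{\tilde X^N} u(\cdot,t)\|_{2,N} \le \|u^N(t) - v^N(t)\|_{2,N} + \|v^N(t) - \mathbf{P}_{\tilde X^N} u(\cdot,t)\|_{2,N}.
\end{equation*}
The second term is the \emph{deterministic-graphon} discretization error for the graphon $\bw$, which is controlled (conditionally on $X$, hence almost surely) by the classical graph-limit estimate of Medvedev-type: a Gr\"onwall argument using the Lipschitz bound $L$ on $D$, the $L^\infty$ bound $K$, and the moment bound $M$ on $\bw$, together with the law of large numbers for the i.i.d. sample $X^N$ to handle the quadrature error $\frac1N\sum_j \bw(X_i,X_j)D(\cdots) - \int_I \bw(x_i,y)D(\cdots)\,dy$. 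This is where a $1/N$-type fluctuation appears and contributes to $\tilde C_1$.

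\medskip

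\noindent The first term, $\|u^N - v^N\|_{2,N}$, is the genuinely new part and the main obstacle. Subtracting the two ODE systems, writing $e_i := u_i^N - v_i^N$ and $\eta_{ij} := \xi_{ij} - \bw(X_i,X_j)$ (which, conditionally on $X$, are independent mean-zero with variance bounded via Hyp.~\ref{hypo_moment}), one gets
\begin{equation*}
\frac{d}{dt} e_i = \frac1N\sum_{j=1}^N \eta_{ij} D(u_j^N - u_i^N) + \frac1N\sum_{j=1}^N \bw(X_i,X_j)\big(D(u_j^N-u_i^N) - D(v_j^N-v_i^N)\big).
\end{equation*}
Testing against $e_i$, summing, and using Lipschitzness of $D$ on the second sum gives a term bounded by $\frac{2}{N}\sum_i |e_i| \cdot L \cdot \frac1N\sum_j \bw(X_i,X_j)(|e_i|+|e_j|)$, which after Cauchy--Schwarz and the moment bound yields a $C\,L\,M\,\|e\|_{2,N}^2$ contribution — this is the $4ML$ appearing in the exponential rate. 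The first sum is the noise term: one writes $\frac1N\sum_i e_i \cdot \frac1N\sum_j \eta_{ij} D(u_j^N-u_i^N) \le \frac12\|e\|_{2,N}^2 + \frac12 \cdot \frac1N\sum_i \big(\frac1N\sum_j \eta_{ij}D(u_j^N-u_i^N)\big)^2$, which produces the $\frac12$ in the exponent and a remainder $R_N(t) := \frac1N\sum_i \big(\frac1N\sum_j \eta_{ij}D(u_j^N-u_i^N)\big)^2$. Gr\"onwall's inequality then gives $\|e(t)\|_{2,N}^2 \le e^{(1+8ML)t}\int_0^t R_N(s)\,ds$, so everything reduces to bounding $\sup_{t\in[0,T]}R_N(t)$ in probability.

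\medskip

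\noindent For the noise estimate I would first bound $R_N(t)$ pathwise \emph{uniformly in $t$} in terms of a time-independent random variable: since $|D(\cdot)| \le K$, crudely $R_N(t) \le \frac{K^2}{N^2}\sum_i \big(\frac1N\sum_j |\eta_{ij}|\big)^2$ — but this is too lossy, so instead I would keep the sign cancellation by expanding the square, $R_N(t) = \frac{1}{N^3}\sum_{i}\sum_{j,k}\eta_{ij}\eta_{ik}D(u_j^N-u_i^N)D(u_k^N-u_i^N)$, take conditional expectation given $X$: only the diagonal $j=k$ survives the $\eta$-independence, giving $\E[R_N(t)\mid X] \le \frac{K^2}{N^3}\sum_{i,j}\V(\xi_{ij}\mid X) \le \frac{K^2 M^2}{N}$ (using $\V(\xi_{ij}\mid X) \le \E[\xi_{ij}^2\mid X] \le M^2$). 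Then $\E\big[\int_0^T R_N(s)\,ds\big] \le \frac{T K^2 M^2}{N}$, hence $\E\|e(T)\|_{2,N}^2 \lesssim \frac{TK^2M^2}{N}e^{(1+8ML)T}$. To upgrade this to the uniform-in-$t$ high-probability bound \eqref{eq:conv1} I would combine a Markov inequality on $\int_0^T R_N$ with the fact that the Gr\"onwall bound already controls the sup over $t$ once $\int_0^T R_N$ is controlled; summing the two error contributions, matching the constants, and bounding the higher moments of $\xi_{ij}$ (needed for the second-term fluctuation, which is where the fourth moments and the $3M^4K^4$ enter) yields the stated $C_1(T)$ and $\tilde C_1$. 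The delicate points are (i) getting the uniform-in-time control of $R_N$ without losing the variance cancellation, and (ii) carefully tracking constants through both the Gr\"onwall steps and the probabilistic (Markov/Chebyshev) estimates so that they assemble into exactly $C_1(T) = \sqrt T\sqrt{1+M^2K^2}\,e^{(\frac12 + 4ML)T}$ and $\tilde C_1 = 3M^4K^4 + 6$.
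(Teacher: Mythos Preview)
Your two-stage decomposition via the intermediate averaged system $v^N$ (with weights $\bw(X_i,X_j)$) is a natural strategy, and it differs from the paper's proof, which compares $u^N$ directly with $\mathbf{P}_{\tilde X^N}u$. However, there is a genuine gap in your treatment of the noise term $R_N(t)$.

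You claim that in
\[
R_N(t) = \frac{1}{N^3}\sum_i\sum_{j,k}\eta_{ij}\eta_{ik}\,D(u_j^N-u_i^N)\,D(u_k^N-u_i^N),
\]
after conditioning on $X$ only the diagonal $j=k$ survives ``by $\eta$-independence''. This is incorrect: $u^N$ is the solution of \eqref{eq:ODEs} and therefore depends on \emph{all} the weights $\xi_{kl}$, so $D(u_j^N-u_i^N)$ is \emph{not} $\sigma(X)$-measurable. Hence $\eta_{ij}D(u_j^N-u_i^N)$ has no reason to be conditionally centred, and the off-diagonal terms do not vanish. Your bound $\E[R_N(t)\mid X]\le K^2M^2/N$ is therefore unjustified as written.

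The obvious repair --- replacing $D(u_j^N-u_i^N)$ by $D(v_j^N-v_i^N)$ (which \emph{is} $X$-measurable) and absorbing the discrepancy by Lipschitz --- produces a correction carrying $|\eta_{ij}|$ as coefficient. One is then forced to control random quantities of the type $\frac{1}{N^2}\sum_{i,j}\xi_{ij}^2$ and $\max_i\frac1N\sum_j\xi_{ij}$ inside the Gr\"onwall exponent, i.e.\ exactly the random variables $\alpha_N$ and $\gamma_N$ of Lemma~\ref{Lemma:alphagamma}, which your scheme was designed to avoid by keeping only the bounded weights $\bw$.

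The paper sidesteps this circularity in a single step: it compares $u^N$ directly with $u(X_i,\cdot)$, so that the noise variable $Z_i^N$ of \eqref{eq:Z} involves $D(u(X_j,t)-u(X_i,t))$, which \emph{is} $X$-measurable; the price is that the random $\xi_{ij}$ now sit in the Lipschitz term, bounded by $L(\alpha_N+\gamma_N)\|\zeta^N\|_{2,N}^2$. Lemma~\ref{Lemma:alphagamma} (fourth-moment Chebyshev) gives $\alpha_N,\gamma_N\le 2M$ with probability at least $1-6/N$; Lemma~\ref{Lemma:sigmaY} computes $\V[(Y_i^N)^2]$ and a further Chebyshev step yields $\PP[N\|Z^N\|_{2,N}^2\ge 1+M^2K^2]\le 3M^4K^4/N$. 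Note also that this single $Z^N$ simultaneously encodes the $\xi$-fluctuation \emph{and} the Monte-Carlo quadrature error from the $X_j$'s, so there is no separate ``Medvedev-type'' step to perform. These three probability estimates assemble directly into $\tilde C_1 = 3M^4K^4+6$; your decomposition, even after repair, would not reproduce these exact constants.
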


\begin{rem}
This theorem generalizes and improves previous results concerning the convergence on $W$-random graphs. In \cite{MedvedevR}, dynamical systems on $W$-random graphs were shown to converge in probability to the solution to \eqref{eq:graph-limit_W_intro}. With the $q$-weighted random graph formulation, this result is covered by Theorem~\ref{th:graph_limit}, since as seen in Remark~\ref{rem:examples}, writing $q(x,y;dw)=W(x,y)\delta_1 + (1-W(x,y))\delta_0$ yields:
\[
\int_{\R_+} w q(x,y;dw) = W(x,y).
\]
Moreover, unlike in \cite{MedvedevR}, in which proofs are applications of the Central Limit Theorem, our proof of this result will rely on multiple applications of the Bienaymé-Chebyshev inequality. This allows us to obtain an explicit rate for the convergence in probability, as stated in \eqref{eq:conv1}.
\end{rem}

The proof of Theorem \ref{th:graph_limit} relies on two technical lemmas.
The first one quantifies the asymptotic bounds of two random variables $\alpha_N$ and $\gamma_N$ defined from the family of i.i.d. random variables $(\xi_{ij})_{i,j\in\elts}$ as follows: 
\begin{equation}\label{def:alphagamma}
\alpha_N := \left( \frac{1}{N^2} \sumi \sumj \xi_{ij}^2\right)^\frac{1}{2} \qquad \text{ and } \qquad \gamma_N := \max_{i\in\elts} \frac{1}{N} \sumj \xi_{ij}.
\end{equation}
As can be expected from the Central Limit Theorem, although there is no bound on the individual values of $(\xi_{ij}^2)_{i,j\in\elts}$, their sum can be shown to be bounded with probability tending to $1$ as $N$ goes to infinity. Moreover, the bound on the first four moments of $q$ (Hyp.~\ref{hypo_moment}) allows us to use the Bienaymé-Chebyshev inequality, and we can even quantify the rate of convergence to be $N^{-2}$.
The bound on $\gamma_N$ is slightly less trivial to obtain, due to the fact that it is defined as the maximum of a sum. However, using a generalization of the Bienaymé-Chebyshev inequality, we also show that it is bounded with probability going to $1$ as $N$ tends to infinity, but with a rate only proportional to $N^{-1}$. 

\begin{lemma}\label{Lemma:alphagamma}
Let $X=(X_i)_{i\in\elts}$ be a sequence of i.i.d. random variables satisfying $\mathcal{L}(X_1) = \mathcal{U}(I)$, and $(\xi_{ij})_{i,j\in\elts}$ be the weights of a $q$-weighted random graph generated by the random sequence $X$.
Then the two random variables $\alpha_N$ and $\gamma_N$ defined in \eqref{def:alphagamma} satisfy for $N$ large enough: 
\[
\PP\croch{ \alpha_N \geq 2M} \leq \frac{1}{N^2} \qquad \text{ and } \qquad 
\PP\croch{ \gamma_N \geq 2M} \leq \frac{5}{N},
\]
where $M$ is the uniform bound on the moments of $q$ defined in \eqref{eq:moments}.
\end{lemma}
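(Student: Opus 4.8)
The plan is to estimate $\alpha_N$ and $\gamma_N$ separately, in each case conditioning on the random variables $X = (X_i)_{i \in \elts}$ so that the weights $(\xi_{ij})$ become independent with known conditional moments, then integrating out the conditioning. Throughout I write $\E_X$ and $\V_X$ for conditional expectation and variance given $X$, and I use that, given $X$, the $\xi_{ij}$ are independent with $\E_X[\xi_{ij}^k] = \int_{\R_+} w^k\, q(X_i,X_j; dw) \leq M^k$ for $k \in \{1,\dots,4\}$ by Hyp.~\ref{hypo_moment}.

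For $\alpha_N$, I would work with $\alpha_N^2 = \frac{1}{N^2}\sumi\sumj \xi_{ij}^2$. Conditionally on $X$, this is an average of $N^2$ independent nonnegative random variables each with conditional mean $\leq M^2$, so $\E_X[\alpha_N^2] \leq M^2$; hence unconditionally $\E[\alpha_N^2] \leq M^2$. For the variance, $\V_X(\alpha_N^2) = \frac{1}{N^4}\sumi\sumj \V_X(\xi_{ij}^2) \leq \frac{1}{N^4}\sumi\sumj \E_X[\xi_{ij}^4] \leq \frac{M^4}{N^2}$, and then $\V(\alpha_N^2) \leq \E[\V_X(\alpha_N^2)] + \V(\E_X[\alpha_N^2]) \leq \frac{M^4}{N^2} + 0$ — note $\E_X[\alpha_N^2]$ need not be constant, but it is bounded by $M^2$, so one can instead directly bound $\PP[\alpha_N^2 \geq 4M^2]$. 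The cleanest route: $\PP[\alpha_N \geq 2M] = \PP[\alpha_N^2 - \E_X[\alpha_N^2] \geq 4M^2 - \E_X[\alpha_N^2]] \leq \PP[\alpha_N^2 - \E_X[\alpha_N^2] \geq 3M^2]$, and by the conditional Bienaymé--Chebyshev inequality followed by taking expectations, $\PP[\alpha_N \geq 2M] \leq \frac{\E[\V_X(\alpha_N^2)]}{9M^4} \leq \frac{1}{9 N^2} \leq \frac{1}{N^2}$.

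For $\gamma_N = \max_{i\in\elts} S_i$ with $S_i := \frac{1}{N}\sumj \xi_{ij}$, the union bound gives $\PP[\gamma_N \geq 2M] \leq \sumi \PP[S_i \geq 2M]$, so it suffices to show $\PP[S_i \geq 2M] \leq \frac{5}{N^2}$. A first-moment Chebyshev bound on $S_i$ only gives a rate $N^{-1}$ per term, hence $O(1)$ after the union bound — insufficient. This is the main obstacle. The fix is to use a higher-moment (fourth-moment) version of the Chebyshev inequality on $S_i - \E_X[S_i]$: conditionally on $X$, $S_i - \E_X[S_i]$ is $\frac{1}{N}$ times a sum of $N$ independent centered variables with conditional fourth moments controlled by $M^4$ (via $\E_X[\xi_{ij}^4] \leq M^4$ and Hyp.~\ref{hypo_moment} bounding lower moments too), so the standard expansion of $\E_X[(\sum_j (\xi_{ij}-\E_X\xi_{ij}))^4]$ — in which only the $O(N)$ terms $\E_X[(\xi_{ij}-\E_X\xi_{ij})^4]$ and the $O(N^2)$ cross terms $\E_X[(\xi_{ij}-\E_X\xi_{ij})^2]\E_X[(\xi_{ik}-\E_X\xi_{ik})^2]$ survive by independence and centering — yields $\E_X[(S_i - \E_X[S_i])^4] \leq \frac{C M^4}{N^2}$ for an explicit constant $C$. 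Since $\E_X[S_i] \leq M$, the event $S_i \geq 2M$ forces $|S_i - \E_X[S_i]| \geq M$, so by conditional Markov applied to the fourth power and then taking expectations, $\PP[S_i \geq 2M] \leq \frac{\E[\E_X[(S_i-\E_X[S_i])^4]]}{M^4} \leq \frac{C}{N^2}$, with the constants arranged so that $C \leq 5$; the union bound over $i$ then gives $\PP[\gamma_N \geq 2M] \leq \frac{5}{N}$. I expect the only delicate bookkeeping to be making the constant in the fourth-moment expansion explicit enough to land at $5$, and ensuring "$N$ large enough" absorbs any lower-order slack (e.g. from $9M^4$ versus $M^4$, or from cross terms of different orders).
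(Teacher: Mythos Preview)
Your proposal is correct and follows essentially the same route as the paper: a second-moment Chebyshev bound on $\alpha_N^2$, and a fourth-moment Markov inequality on each centered row sum $S_i - \E_X[S_i]$ to obtain $\PP[S_i \geq 2M] = O(N^{-2})$ before passing to the maximum. The paper handles the maximum via a product formula (treating the $S_i$ as i.i.d.) rather than your union bound, and works unconditionally rather than conditioning on $X$; your choices are in fact slightly cleaner, since they sidestep the dependence among the $\xi_{ij}$ (and among the $S_i$) induced by the shared variables $X_j$.
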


\begin{proof}
We aim to apply the Bienaymé-Chebyshev inequality to the random variable $\alpha_N$.
Recall that $(\xi_{ij})_{i,j\in\elts}$ are i.i.d. random variables whose expectations and variances satisfy for all $(i,j)\in\elts^2$:
\begin{equation*}
E[\xi_{ij}^2]= \iint_{I^2}\int_{\R_+} w^2 q(x,y;dw) dx dy  \leq M^2
\end{equation*}
and 
\begin{equation}\label{eq:boundxicarre}
\V[\xi_{ij}^2]= \iint_{I^2}\int_{\R_+} w^4 q(x,y;dw) dx dy - \left(\iint_{I^2}\int_{\R_+} w^2 q(x,y;dw) dx dy\right)^2 \leq M^4,
\end{equation}
so the Bienaymé-Chebyshev inequality implies:
\[\begin{array}{rcl}
\dsp \PP\left[\frac{1}{N^2}\sum_{i=1}^N \sum_{j=1}^N \xi_{ij} ^2 - \frac{1}{N^2} \sum_{i=1}^N \sum_{j=1}^N \E[\xi_{ij}^2] \geq M^2 \right] & = &  \dsp \PP\left[\alpha_N^2 - \E[\alpha_N^2] \geq M^2 \right] 
  \leq  \dsp \PP\left[\left| \alpha_N^2 - \E[\alpha_N^2] \right| \geq M^2 \right] \\
 & \leq &\dsp  \frac{1}{M^4} \V[\alpha_N^2] = \frac{1}{M^4 N^4} \sum_{i=1}^N \sum_{j=1}^N \V[\xi_{ij}^2] \leq \frac{1}{N^2}.
\end{array}
\]
Thus, 
\[
 \PP\left[\alpha_N^2 \geq M^2 +  M^2 \right] \leq \PP\left[\alpha_N^2 \leq  \frac{1}{N^2} \sum_{i=1}^N \sum_{j=1}^N \E[\xi_{ij}^2] +  M^2 \right]  \leq \frac{1}{N^2},
\]
which implies
\[
 \PP\left[\alpha_N \geq 2  M \right]  \leq \PP\left[\alpha_N^2 \geq 2  M^2 \right] \leq \frac{1}{N^2},
\]
concluding the first part of the statement. 

To study the term $\gamma_N$, we apply a generalization of the Bienaymé-Chebyshev inequality to the random variables $(\frac{1}{N}\sum_{j=1}^N\xi_{ij})_{i\in\elts}$ and obtain
\[\begin{array}{rcl}
\dsp  \PP\croch{{\frac{1}{N}}\sum_{j=1}^N\xi_{ij} - \frac{1}{N}\sum_{j=1}^N\E[\xi_{ij}]   \geq M} & \leq & \dsp   \PP\croch{\left|{\frac{1}{N}}\sum_{j=1}^N\xi_{ij} - \frac{1}{N}\sum_{j=1}^N\E[\xi_{ij}] \right|   \geq M} \\
 & \leq & \dsp \frac{1}{M^4} \E\left[\left|\frac{1}{N} \sum_{j=1}^N \xi_{ij} - \E\left[\sum_{j=1}^N \xi_{ij}\right]\right|^4\right].
\end{array}
\]
Besides, 
\[\begin{array}{rcl}
\dsp \E\left[\left|\frac{1}{N} \sum_{j=1}^N \xi_{ij} - \E\left[\sum_{j=1}^N \xi_{ij}\right]\right|^4\right]& = & \dsp \frac{1}{N^4}  \sum_{j=1}^N  \E\left[\left|\xi_{ij} - \E\left[\xi_{ij}\right]\right|^4\right]  + \frac{6}{N^4} \sum_{1\leq j <  k \leq N} \V[\xi_{ij}]  \V[\xi_{ik}] \\
 & \leq & \dsp \frac{4M^4}{N^3} + \frac{3 N (N-1)}{N^4}M^4 \leq \frac{4M^4}{N^2}
\end{array}
\]
for $N$ large enough.
This implies that 
\[ \dsp  \PP\croch{{\frac{1}{N}}\sum_{j=1}^N\xi_{ij} - \frac{1}{N}\sum_{j=1}^N\E[\xi_{ij}]   \geq M} \leq \frac{4}{N^2}.
\]
Since the random variables $(\frac{1}{N}\sum_{j=1}^N\xi_{ij})_{i\in\elts}$ are i.i.d.,
\[
\begin{array}{rcl}
 \dsp\PP\croch{\max_{i\in\{1,\cdots,N\}}{\frac{1}{N}}\sum_{j=1}^N\xi_{ij} < \frac{1}{N}\sum_{j=1}^N \E[\xi_{ij}] + M} &  =&  \dsp  \PP\croch{\bigcap_{i=1}^N\left\{{\frac{1}{N}}\sum_{j=1}^N\xi_{ij} < \frac{1}{N}\sum_{j=1}^N \E[\xi_{ij}] + M \right\}}\\
 &  = & \dsp \prod_{i=1}^N \PP\croch{{\frac{1}{N}}\sum_{j=1}^N\xi_{ij} < \frac{1}{N}\sum_{j=1}^N \E[\xi_{ij}] + M} \\
  &  = & \dsp \prod_{i=1}^N\left( 1 -  \PP\croch{{\frac{1}{N}}\sum_{j=1}^N\xi_{ij} \geq  \frac{1}{N}\sum_{j=1}^N \E[\xi_{ij}] + M} \right) \\
 & \geq  & \dsp \left(1- \frac{4}{N^2} \right)^N 
= \exp \left(N \ln\left( 1- \frac{4}{N^2}\right) \right) .
\end{array}
\]
Hence, for $N$ large enough,
\[  
\begin{array}{rcl}
\dsp\PP\croch{\max_{i\in\{1,\cdots,N\}}{\frac{1}{N}}\sum_{j=1}^N\xi_{ij} \geq 2 M}  
& \leq & \dsp \PP\croch{\max_{i\in\{1,\cdots,N\}}{\frac{1}{N}}\sum_{j=1}^N\xi_{ij} \geq \frac{1}{N}\sum_{j=1}^N \E[\xi_{ij}] + M} \\
& \leq & \dsp1 - \exp \left(N \ln\left( 1- \frac{4}{N^2}\right) \right) 
 = \dsp \frac{4}{N} + o\left(\frac{4}{N}\right)\leq \frac{5}{N}.
\end{array}
\]

\end{proof}

In a second technical lemma, we study the time-dependent random variable $Y_i^N(t)$ defined from the solution $u$ to \eqref{eq:graph_limit} for all $i\in\elts$ and $t\in [0,T]$ as $Y_i^N(t) := \sqrt{N}Z_i^N(t)$, where
\begin{equation}\label{eq:Z}
 Z_i^N(t) := \frac{1}{N} \sum_{j=1}^N  \xi_{ij} D(u(X_j,t)-u(X_i,t))  - \int_I   \bw(X_i,y) D(u(y,t)-u(X_i,t))dy.
\end{equation}
Notice that the expected value of $Z_i^N(t)$ satisfies for all $t\in [0,T]$:
\[ 
\E\croch{Z_i^N(t)} = \int_{I^2} \int_{\R_+} \frac{1}{N} \sum_{j=1}^N  w  D(u(y,t)-u(x,t)) q(x,y;dw) dy dx - \int_{I^2}  \bw(x,y) D(u(y,t)-u(x,t))dy dx = 0,
\]
recalling that $\bw(x,y)$ denotes the first moment of $q(x,y;\cdot)$. Hence, it also holds $\E\croch{Y_i^N(t)}=0$.
In the following lemma, we compute the (time-dependent) variance of $Y_i^N(t)$ that we denote by $\sigma_Y^2(t):=\V[Y_i(t)]$, and we estimate the expected value and variance of $(Y_i^N(t))^2$, expressing them as functions of $\sigma_Y^2(t)$.

\begin{lemma}\label{Lemma:sigmaY}
Given a solution $u$ to the integro-differential equation \eqref{eq:graph_limit}, we consider the collection of random variables $(Y^N_i)_{i\in\elts}$, defined for all $i\in\elts$ by $Y^N_i(t):=\sqrt{N} Z^N_i(t)$, where $Z^N_i(t)$  is given by \eqref{eq:Z}.
Then for all $i\in\elts$, its variance satisfies:
$
\V\croch{(Y_i^N)} = \E\croch{(Y_i^N)^2} = \sz^2(t),
$
where
\[
\sz^2(t) := \iint_{I^2} \bw(x,y) D(u(y,t)-u(x,t))^2 dx dy 
-  \int_I \left( \int_I  \bw(x,y) D(u(y,t) - u(x,t)) \right)^2 dx.
\]
For the random variables $(Y^N_i)^2$, it holds for all $i\in\elts$,
\[
\E\croch{(Y_i^N)^4} = 3\sz^4(t) + O(\frac{1}{N}) \qquad \text{ and }  \qquad
\V\croch{(Y_i^N)^2} = 2\sz^4(t) + O(\frac{1}{N}).
\]
\end{lemma}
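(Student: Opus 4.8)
The plan is to compute the variance $\sz^2(t)$ directly from the definition of $Y_i^N = \sqrt N Z_i^N$, and then to treat $(Y_i^N)^2$ by the same kind of moment expansion, tracking which terms survive as $N\to\infty$. First I would fix $i$ and $t$ (I will suppress $t$ in the notation) and write $Z_i^N = \frac1N\sum_{j=1}^N A_{ij}$, where $A_{ij} := \xi_{ij}\,D(u(X_j)-u(X_i)) - \bw(X_i,y)\cdots$ — more precisely I would center each summand by subtracting the integral term inside the sum, writing $Z_i^N = \frac1N\sum_{j=1}^N B_{ij}$ with $B_{ij} := \xi_{ij} D(u(X_j)-u(X_i)) - \int_I \bw(X_i,y)D(u(y)-u(X_i))\,dy$. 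The key structural facts to establish are: conditionally on $X_i$, the variables $(B_{ij})_{j\neq i}$ are i.i.d. with mean zero (this is exactly the computation already shown before the lemma, giving $\E[Z_i^N]=0$), and $B_{ii}$ contributes only a $O(1/N)$ correction since it is a single bounded-in-expectation term divided by $N$. So $\V[Z_i^N] = \frac{1}{N}\,\V[B_{i1}] + O(1/N^2)$ and hence $\V[Y_i^N] = N\,\V[Z_i^N] = \V[B_{i1}] + O(1/N)$. Then I would compute $\V[B_{i1}] = \E[B_{i1}^2]$ (since $\E[B_{i1}]=0$): expanding the square, using the tower property (condition on $X_1$, integrate $\xi_{11}^2$ against $q(X_i,X_1;\cdot)$ to get $\bw$... wait, the second moment, but after subtracting the mean the cross terms rearrange) one gets exactly $\sz^2(t)$ as written — the first term $\iint \bw(x,y)D(u(y)-u(x))^2$ comes from $\E[\xi^2 D^2]$ using that $\E[\xi^2\mid X]$... actually from $\E[\xi\mid X]=\bw$ together with the fact that after centering the relevant object is $\E[(\xi - \E[\xi\mid X_i,X_1])^2 D^2 + \ldots]$; here I should be careful and instead note $\E[B_{i1}^2] = \E[\xi_{i1}^2 D^2] - \E[(\int\bw D)^2]$ where the first expectation is $\iint \E[\xi_{i1}^2\mid x,y]D^2 = \iint (\text{2nd moment of }q)\,D^2$ — but the stated $\sz^2$ has $\bw$, not the second moment, in the first term. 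I would need to re-derive this carefully: the cancellation must come from writing $B_{i1} = (\xi_{i1} - \bw(X_i,X_1))D + (\bw(X_i,X_1)D - \int_I \bw(X_i,y)D\,dy)$, whose two pieces are conditionally (on $(X_i,X_1)$, resp. on $X_i$) orthogonal, giving $\E[B_{i1}^2] = \E[(\xi_{i1}-\bw)^2 D^2] + \E[(\bw D - \int \bw D)^2]$, and the first piece's conditional variance of $\xi$ is... still not $\bw$. The resolution is almost certainly that the intended model has $\xi_{ij}\in\{0,1\}$-type structure replaced by: the correct reading of $\sz^2$ uses $\E[\xi^2\mid x,y] = \bw(x,y)$ only when $q$ is Bernoulli; in general the first term should be the second moment. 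I would flag this and either adopt the second-moment form or verify the paper's normalization; assuming the statement as given, the computation of $\E[B_{i1}^2]$ via the orthogonal decomposition above yields it, and this is the first deliverable.

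Next, for the fourth moment: $(Y_i^N)^2 = N (Z_i^N)^2 = \frac1N\big(\sum_j B_{ij}\big)^2$, so $(Y_i^N)^4 = \frac{1}{N^2}\sum_{j,k,l,m} B_{ij}B_{ik}B_{il}B_{im}$. Conditioning on $X_i$, I expand the expectation over the i.i.d.\ family $(B_{ij})_{j\neq i}$ and the single term $B_{ii}$. By the standard partition-of-indices argument for fourth moments of sums of i.i.d.\ centered variables, $\E[(\sum_{j\neq i} B_{ij})^4\mid X_i] = N'\,\mu_4 + 3N'(N'-1)\,\mu_2^2$ where $N' = N-1$, $\mu_2 = \V[B_{i1}\mid X_i]$, $\mu_4 = \E[B_{i1}^4\mid X_i]$; dividing by $N^2$ the $N' \mu_4$ term is $O(1/N)$ (here the uniform fourth-moment bound Hyp.~\ref{hypo_moment} plus boundedness of $D$ is exactly what makes $\mu_4$ bounded uniformly in $X_i$), and the $3N'(N'-1)\mu_2^2/N^2$ term is $3\mu_2^2 + O(1/N)$. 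Taking expectation over $X_i$ and using $\E[\mu_2] = \sz^2$ (and that $\V[\mu_2]$ contributes, after the $N^2$ normalization, only lower order — this needs the observation that $\E[\mu_2^2] = (\E\mu_2)^2 + \V[\mu_2] = \sz^4 + O(1)$ where the $O(1)$ is genuinely $O(1)$, not $o(1)$, so I must instead argue more carefully that $3\,\E[\mu_2^2]$ is the leading term and Jensen/boundedness controls it — actually $\E[\mu_2^2]$ is just a fixed number, and the claim $\E[(Y_i^N)^4] = 3\sz^4 + O(1/N)$ forces $\E[\mu_2^2] = \sz^4$, i.e.\ $\mu_2$ is $X_i$-a.s.\ constant equal to $\sz^2$; I would check whether $\V[B_{i1}\mid X_i]$ does depend on $X_i$, and if it does, then the correct statement has $3\E[\mu_2^2]$ not $3\sz^4$, another point to flag). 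The $B_{ii}$ corrections and mixed terms involving $B_{ii}$ are each suppressed by at least one extra power of $N^{-1}$ after the $N^{-2}$ prefactor, using boundedness of all moments. Finally $\V[(Y_i^N)^2] = \E[(Y_i^N)^4] - (\E[(Y_i^N)^2])^2 = (3\sz^4 + O(1/N)) - (\sz^2 + O(1/N))^2 = 2\sz^4 + O(1/N)$, which gives the last claim.

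The main obstacle I anticipate is precisely the bookkeeping of conditional expectations: one must consistently condition on $X_i$ (and sometimes on the pair $(X_i,X_j)$) before using independence of the $\xi$'s, and keep separate the two sources of randomness (the $X$'s and the $\xi$'s). The fourth-moment partition argument is routine once the independence structure is pinned down, but it is easy to misplace a factor of $N$ versus $N-1$, or to overlook that the conditional variance $\mu_2$ may be a nontrivial function of $X_i$ (in which case $\sz^4$ in the statement should read $\E[\mu_2^2]$, the two agreeing only if $\mu_2$ happens to be constant). The secondary obstacle is verifying that the diagonal term $B_{ii}$ and the finitely many mixed terms it generates are all $O(1/N)$ after normalization — this is where Hyp.~\ref{hypo_moment} (uniform bound on the first four moments of $q$) and Hyp.~\ref{hyp:D} (boundedness of $D$) get used in an essential way, since without them the fourth moment $\mu_4$ need not be finite. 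I would organize the proof as: (i) the conditional i.i.d.\ decomposition of $Z_i^N$; (ii) computation of $\sz^2$ via the orthogonal splitting of $B_{i1}$; (iii) the fourth-moment partition identity conditional on $X_i$; (iv) collecting orders of magnitude using the hypotheses; (v) the elementary subtraction giving $\V[(Y_i^N)^2]$.
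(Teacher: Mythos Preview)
Your approach is exactly the paper's: write $\eta_{ij}=\xi_{ij}D(u(X_j)-u(X_i))$ and $\mu(X_i)=\E[\eta_{ij}\mid X_i]$, condition on $X_i$, use conditional independence of $(\eta_{ij}-\mu(X_i))_j$ for the second moment and the standard $N\mu_4+3N(N-1)\mu_2^2$ partition identity for the fourth. Both discrepancies you flag are real and appear in the paper as well---its own appendix computation yields $\iint\big(\int_{\R_+} w^2\,q(x,y;dw)\big)D^2\,dxdy$ rather than $\iint\bw\, D^2$ in the first term of $\sigma_Y^2$, and it silently identifies $\E[\mu_2(X_i)^2]$ with $\sigma_Y^4$ without arguing that the conditional variance $\mu_2$ is constant in $X_i$; neither affects the downstream argument, which only uses the crude bound $\sigma_Y^2\le M^2K^2$.
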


\begin{proof}
The proof of Lemma \ref{Lemma:sigmaY} can be found in the Appendix \ref{App:1}.
\end{proof}

We are now ready to prove Theorem \ref{th:graph_limit}.

\begin{proof}[Proof of Theorem \ref{th:graph_limit}]
For all $i \in \{1, \dots, N\}$ and $t\in [0,T]$, we denote $\zeta_i^N(t) := u(X_i,t) - u_i^N(t)$ and $\zeta^N(t) = (\zeta^N_1(t), \dots, \zeta^N_N(t))$. 
For conciseness, when context is clear, we omit the explicit time dependence and write $u(x,t)$ as $u(x)$.
We substract \eqref{eq:ODEs} from \eqref{eq:graph_limit} evaluated at $x=X_i$ and obtain 
\begin{equation*}
\begin{split}
 \frac{d}{dt} \zeta_i^N(t)   = & \dsp \int_I   \bw(X_i,y) D(u(y,t)-u(X_i,t))dy - \frac{1}{N} \sum_{j=1}^N  \xi_{ij} D(\ujn(t) - \uin(t)) \\
  = & \dsp \int_I   \bw(X_i,y) D(u(y,t)-u(X_i,t))dy - \frac{1}{N} \sum_{j=1}^N  \xi_{ij} D(u(X_j,t)-u(X_i,t))\\
 & + \frac{1}{N} \sum_{j=1}^N  \xi_{ij} \left[  D(u(X_j,t)-u(X_i,t)) - D(\ujn(t) - \uin(t))\right].
\end{split}
\end{equation*} 
Recognizing the random variable $Z_i^N(t)$ from its definition in \eqref{eq:Z}, 
we multiply by $\dsp \frac{1}{N} \zeta_i^N$ and sum over $i$, which yields:
\begin{equation}\label{eq:diffzeta}
\frac{1}{2} \frac{d}{dt} \|\zeta^N\|_{2,N}^2 = -(Z^N, \zeta^N)_N + \frac{1}{N^2} \sum_{i=1}^N\sum_{j=1}^N \xi_{ij} [D(u(X_j)-u(X_i)) - D(\ujn - \uin)] \zeta^N_i.
\end{equation}
Using the Cauchy-Schwarz inequality for the inner product $(\cdot,\cdot)_N$,
the second term of \eqref{eq:diffzeta} can be bounded above as follows: 
\begin{equation}\label{eq:alphaNgammaN}
\begin{split}
& \left|\frac{1}{N^2} \sum_{i=1}^N\sum_{j=1}^N \xi_{ij} [D(u(X_j)-u(X_i)) - D(\ujn - \uin)] \zeta^N_i \right| \leq \frac{1}{N^2} \sum_{i=1}^N\sum_{j=1}^N \xi_{ij} L \left(|\zeta^N_j|+|\zeta^N_i|\right) |\zeta^N_i| \\
\leq \; & \frac{L}{N^2} \sum_{i=1}^N|\zeta^N_i| \sum_{j=1}^N \xi_{ij} |\zeta^N_j| + \frac{L}{N^2} \sum_{i=1}^N|\zeta^N_i| \sum_{j=1}^N \xi_{ij} |\zeta^N_i| \\
\leq \; & \frac{L}{N} \sum_{i=1}^N|\zeta^N_i| \left( \frac{1}{N}\sum_{j=1}^N \xi_{ij} ^2\right)^{\frac{1}{2}} \left( \frac{1}{N}\sum_{j=1}^N |\zeta^N_j| ^2\right)^{\frac{1}{2}} + \frac{L}{N} \left( \frac{1}{N}\sum_{i=1}^N|\zeta^N_i|^2\right)^{\frac{1}{2}} \left( \frac{1}{N} \sum_{i=1}^N \left( \sum_{j=1}^N \xi_{ij} |\zeta^N_i|\right)^2\right)^{\frac{1}{2}} \\
\leq \; &  L \left( \frac{1}{N}\sum_{i=1}^N |\zeta^N_i| ^2\right) \left( \frac{1}{N}\sum_{i=1}^N \frac{1}{N}\sum_{j=1}^N \xi_{ij} ^2 \right)^{\frac{1}{2}} + \frac{L}{N} \left( \frac{1}{N}\sum_{i=1}^N|\zeta^N_i|^2\right) \left(\max_{i\in\{1,\cdots,N\}} \left(\sum_{j=1}^N \xi_{ij}\right)^2\right)^{\frac{1}{2}} \\
= \; &  L \|\zeta^N\|_{2,N}^2 \left( \frac{1}{N^2}\sum_{i=1}^N \sum_{j=1}^N \xi_{ij} ^2 \right)^{\frac{1}{2}} + \frac{L}{N} \|\zeta^N\|_{2,N}^2 \left(\max_{i\in\{1,\cdots,N\}} \left(\sum_{j=1}^N \xi_{ij}\right)^2\right)^{\frac{1}{2}}\\
= \; &  L \|\zeta^N\|_{2,N}^2 (\alpha_N+\gamma_N), 
\end{split}
\end{equation}
with $ \alpha_N :=  \left( \frac{1}{N^2}\sum_{i=1}^N \sum_{j=1}^N \xi_{ij} ^2 \right)^{\frac{1}{2}}$ and $\gamma_N = \max_{i\in\{1,\cdots,N\}} \frac{1}{N}\sum_{j=1}^N \xi_{ij}.$
Coming back to \eqref{eq:diffzeta}, it holds
\[
\frac{d}{dt} \|\zeta^N\|_{2,N}^2 \leq 2\|Z^N\|_{2,N} \|\zeta^N\|_{2,N}  + 2 L \|\zeta^N\|_{2,N}^2 (\alpha_N+\gamma_N)
\leq  \|Z^N\|_{2,N} ^2 + \|\zeta^N\|_{2,N}^2(1+2 L (\alpha_N+\gamma_N)).
\]
From Gronwall's lemma and using the fact that $\|\zeta^N(0)\|_{2,N}^2=0$, we obtain: 
\begin{equation}\label{eq:zeta}
\|\zeta^N(t)\|_{2,N}^2 \leq \int_0^t \|Z^N(s)\|_{2,N}^2 ds \; e^{(1+2 L (\alpha_N+\gamma_N))t} \leq {T}\sup_{s\in[0,T]} \|Z^N(s)\|_{2,N}^2  \; e^{(1+2 L (\alpha_N+\gamma_N))T} .
\end{equation}
The asymptotic behavior of $\alpha_N$ and $\gamma_N$ was studied in Lemma \ref{Lemma:alphagamma}.
We now focus on the asymptotic behavior of $Z^N$.

Denonting for all $i\in\elts$ $\sigma^2_i(t):=\V[Y_i^N(t)]$, notice from Lemma \ref{Lemma:sigmaY} that
for all $t\in[0,T]$,
\[
\begin{split}
\sigma^2_i(t) & = \sigma_Y^2(t) \\
& = \iint_{I^2}\bw(x,y) D(u(y,t)-u(x,t))^2  dx dy - \int_I \left( \int_{\R_+} \int_I \bw(x,y) D(u(y,t) - u(x,t)) q(x,y;dw)dy \right)^2 dx \\
 & \leq \; c_1 := M^2 K^2, 
\end{split}
\]
where $K$ and $M$ are the constants of Hyp.~\ref{hyp:D} and \ref{hypo_moment}.
Therefore, applying again the Bienaymé-Chebyshev inequality, it holds
$$\begin{array}{rcl}
\mathbb{P}\left(\sqrt{N} \|Z^N(t) \|_{2,N} e^{(\frac{1}{2}+4ML)T} \geq \sqrt{1 + c_1}e^{(\frac{1}{2}+4ML)T}\right) & = &  \dsp \mathbb{P}(N \|Z^N(t) \|_{2,N}^2 \geq 1 +c_1) \\
 & \leq &\dsp \mathbb{P}\left(N \|Z^N(t) \|_{2,N}^2 \geq 1 +\frac{1}{N}\sum_{i=1}^N (\sigma_i(t))^2\right)  \\
  & \leq & \dsp \mathbb{P}\left( \left|N \|Z^N(t) \|_{2,N}^2 - \frac{1}{N}\sum_{i=1}^N (\sigma_i(t))^2 \right| \geq 1 \right) \\
  &=  & \dsp \mathbb{P}\left( \left| \frac{1}{N} \sum_{i=1}^N (Y_i^N(t))^2 - \E\left[ \frac{1}{N}  \sum_{i=1}^N (Y_i^N(t))^2  \right]  \right| \geq 1 \right) \\
  & \leq & \dsp \frac{1}{N^2} \sum_{i=1}^N  \V\left[ (Y_i^N(t))^2\right]  \\
  & \leq & \dsp
 \frac{1}{N^2}  \sum_{i=1}^N (2 \sz^4(t)+O(\frac{1}{N})),
 \end{array}$$
where we used Lemma \ref{Lemma:sigmaY} for the last inequality. \\
Thus, for $N$ large enough,  
 $$\begin{array}{rcl}
\dsp \mathbb{P}\left(\sqrt{N} \|Z^N(t) \|_{2,N} e^{(\frac{1}{2}+4ML)T} \geq \sqrt{1 + c_1}e^{(\frac{1}{2}+4ML)T)}\right) & \leq &\dsp  \frac{3 }{N}M^4 K^4.
 \end{array}$$ 
Hence, for all $t \in [0,T]$,
\[\begin{split}
& \mathbb{P}\croch{\sqrt{N} \|Z^N(t) \|_{2,N} e^{(\frac{1}{2}+L(\gamma_N+\alpha_N))T} \geq \sqrt{1 + c_1}e^{(\frac{1}{2}+4ML)T}} \\
= \;&  \mathbb{P}\croch{\left\{\sqrt{N} \|Z^N(t) \|_{2,N} e^{(\frac{1}{2}+L\gamma_N)T} \geq \sqrt{1 + c_1}e^{(\frac{1}{2}+4ML)T} e^{-L\alpha_N T}\right\} \bigcap \left\{ \alpha_N < 2M \right\}}\\
&  ~~~~~~ +  \mathbb{P}\croch{\left\{\sqrt{N} \|Z^N(t) \|_{2,N} e^{(\frac{1}{2}+L\gamma_N)T} \geq \sqrt{1 + c_1}e^{(\frac{1}{2}+4ML)T)} e^{-L\alpha_N T}\right\} \bigcap \left\{ \alpha_N \geq 2M \right\}} \\
 \leq \; &   \mathbb{P}\croch{\left\{\sqrt{N} \|Z^N(t) \|_{2,N} e^{(\frac{1}{2}+L\gamma_N)T} \geq \sqrt{1 + c_1}e^{(\frac{1}{2}+2ML)T)}\right\} \right] +  \mathbb{P}\left[ \alpha_N \geq 2M } \\
=  \; &\mathbb{P}\croch{\left\{\sqrt{N} \|Z^N(t) \|_{2,N} e^{\frac{1}{2}T} \geq \sqrt{1 + c_1}e^{(\frac{1}{2}+2ML)T)} e^{-L\gamma_N T}\right\} \bigcap \left\{ \gamma_N < 2M \right\}}\\
&  ~~~~~~+   \mathbb{P}\croch{\left\{\sqrt{N} \|Z^N(t) \|_{2,N} e^{\frac{1}{2}T} \geq \sqrt{1 + c_1}e^{(\frac{1}{2}+2ML)T)} e^{-L\gamma_N T}\right\} \bigcap \left\{ \gamma_N \geq 2M \right\} } 
 +  \mathbb{P}\croch{ \alpha_N \geq 2M } \\
   \leq \; &  \mathbb{P}\croch{\sqrt{N} \|Z^N(t) \|_{2,N} e^{\frac{1}{2}T} \geq \sqrt{1 + c_1}e^{\frac{1}{2}T} } +   \mathbb{P} \croch{\gamma_N \geq 2M} +  \mathbb{P}\croch{ \alpha_N \geq 2M } \\
  \leq  \; & \frac{3 M^4 K^4}{N} + \frac{1}{N^2} + \frac{5}{N} \leq \frac{\tilde{C}_1}{N}
 \end{split}
 \]
for $N$ large enough, where the penultimate inequality comes from Lemma \ref{Lemma:alphagamma}, and where $\tilde{C}_1 := 3 M^4 K^4+ 6$.
Recaling from \eqref{eq:zeta} that 
$\|\zeta^N(t)\|_{2,N} \leq \sqrt{T}\sup_{s\in[0,T]} \|Z^N(s)\|_{2,N}  \; e^{(\frac{1}{2}+ L (\alpha_N+\gamma_N))T}$, we obtain:
\[
\mathbb{P}\croch{\|\zeta^N(t)\|_{2,N} \leq \frac{C_1(T)}{\sqrt{N}} } \leq \frac{\tilde{C}_1}{N},
\]
where $C_1(T):= \sqrt{T} \sqrt{1 + M^2 K^2 }e^{(\frac{1}{2}+4ML)T}$ uniformly for $t \in [0,T]$, which concludes the proof by continuity in $t$.
\end{proof}


\section{Networks on weighted random graphs generated by deterministic sequences} \label{Sec:Deter}
{In this section, we are now interested in weighted random graphs generated by deterministic sequences. 
Denoting for all $i\in\elts$, $I_i^N:= [\frac{i-1}{N}, \frac{i}{N})$, we consider the deterministic sequence $X^N= \{ x_1^N, \dots , x_N^N\}$, where for all $i \in \{1, \dots, N \}$, $x_i^N \in I_i^N$. 

\begin{definition}
Let
$$
\begin{array}{llll}
q :&I \times I &  \to & \mathcal{P}(\mathbb{R_+})\\
 & (x,y) &\mapsto & q(x,y;\cdot)
\end{array}$$
be a weighted random graph law.
A \textbf{$q$-weighted random graph} on $N$ nodes \textbf{generated by the deterministic sequence $X^N$}, denoted $\overline{G}_N$, is such that the weight of each edge $(i,j), i \neq j,$ of $\overline{G}_N$ is randomly attributed, and its law is $q(x_i^N,x_j^N;\cdot)$. \\
{The decision of the attribution of the weight of a pair $(i,j) \in \{1, \dots, N\}^2, i \neq j,$ is made independently from the decision for other pairs.} 
\end{definition}
}

Thus, in this part, we consider the system of differential equation 
\begin{equation}     
\tag{$\mathcal{S}_N^\mathrm{r-d}$}
\label{ODE22}
\begin{cases}     
\displaystyle  \frac{d}{dt} \uin(t)= \frac{1}{N} \sum_{j=1}^N \xi_{ij} D(\ujn - \uin),~~~~ i \in \{1, \dots, N\}\\
\uin(0) = g(x_i^N),~~~~   i \in \{1, \dots, N\}
\end{cases}
\end{equation}
where  $\mathcal{L}(\xi_{ij}| {X}^N) = q(x_i^N,x_j^N,\cdot)$. 
As opposed to Section \ref{Sec:Random}, system \eqref{ODE22} is only simply random: the weights $\xi_{ij}$ are \textit{randomly} attributed from the distribution $q(x_i^N,x_j^N,\cdot)$, where $(x_i^N,x_j^N)$ are \textit{deterministic}.
        
As in Section \ref{Sec:Random}, we will prove that the solution to \eqref{ODE22} converges as $N$ goes to infinity to the solution to the Graph Limit equation 
\begin{equation}
\begin{cases}\tag{C}
\partial_t u(x,t)  = \dsp \int_I \left( \int_{\mathbb{R_+}}  w q(x,y;dw)\right) D(u(y,t)-u(x,t))dy\\
u(x,0)   =  g(x),  ~~~x \in I,
\end{cases}
\end{equation}
in a sense to be determined.

From the particle system $(u_i^N(t))_{i\in\elts}$, we construct a piecewise-constant bounded function $u_N\in L^\infty(I\times\R)$ defined by 
\begin{equation} \label{def:un}
\forall t \in \mathbb{R},\; \forall x\in I, \qquad u_N(x,t) = \sum_{i=1}^N u_i^N(t) \one_{I^N_i}(x),
\end{equation}
where $\one_{I^N_i}$ denotes the indicator function of the interval $I^N_i$.
We can then proove the following:
\begin{theorem}
\label{th:deterministic}
Let $D$ satisfy Hyp. \ref{hyp:D}, and let $g \in C^{0,\frac{1}{2}}(I)$. Suppose that the  weighted random graph law satisfies Hypothesis \ref{hypo_moment} and that $(x,y) \mapsto \int_{\R_+} w q(x,y;dw)$ is $\frac{1}{2}-$Hölder on $I^2$.
Then, the function $u_N$ dedfined in \eqref{def:un}  converges as $N$ goes to infinity to the solution $u$ of the continuous model \eqref{eq:graph_limit}. More precisely, 
\begin{equation}\label{eq:conv2}
 \mathbb{P} \croch{ \|u_N - u \|_{\mathcal{C}(0,T;L^2(I))} \geq \frac{C_2}{\sqrt{N}}} \leq \frac{\tilde{C}_2}{N}
\end{equation}
for some explicit constants $C_2, \tilde{C}_2>0$.
\end{theorem}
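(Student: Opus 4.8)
The plan is to mirror the structure of the proof of Theorem \ref{th:graph_limit}, but replacing the random sampling points $X_i$ by the deterministic points $x_i^N$, and then controlling the extra discretization error that arises because $u_N$ is compared to $u$ on all of $I$ (in the $L^2(I)$ norm), not just at the sampling nodes. Concretely, I would introduce the projected profile $\mathbf{P}_{x^N} u = (u(x_1^N,t),\dots,u(x_N^N,t))$ and the piecewise-constant interpolant $\bar u_N(x,t) := \sum_{i=1}^N u(x_i^N,t)\,\one_{I_i^N}(x)$, and split
\[
\|u_N - u\|_{\mathcal{C}(0,T;L^2(I))} \le \|u_N - \bar u_N\|_{\mathcal{C}(0,T;L^2(I))} + \|\bar u_N - u\|_{\mathcal{C}(0,T;L^2(I))}.
\]
The second term is purely deterministic: since $u(\cdot,t)$ inherits $\frac12$-Hölder regularity in space from the hypotheses on $g$ and on $\bw$ (this should be checked by a Grönwall argument on the spatial modulus of continuity of $u$, using that $D$ is Lipschitz), and each $I_i^N$ has length $1/N$, one gets $\|\bar u_N(\cdot,t)-u(\cdot,t)\|_{L^2(I)} \le C N^{-1/2}$ uniformly in $t$. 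The first term equals $\|\zeta^N(t)\|_{2,N}$ with $\zeta_i^N(t) := u(x_i^N,t) - u_i^N(t)$, exactly as in the previous section, so it remains to redo the $\|\zeta^N\|_{2,N}$ estimate in the deterministic-sequence setting.

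For that estimate I would reproduce verbatim the differential inequality \eqref{eq:diffzeta}–\eqref{eq:zeta}: subtract \eqref{ODE22} from \eqref{eq:graph_limit} evaluated at $x = x_i^N$, introduce the analogue of $Z_i^N(t)$ (now with $x_i^N$ in place of $X_i$ and with the integral term $\int_I \bw(x_i^N,y)D(u(y,t)-u(x_i^N,t))\,dy$), apply Cauchy–Schwarz to bound the Lipschitz remainder by $L\|\zeta^N\|_{2,N}^2(\alpha_N+\gamma_N)$, and Grönwall to get
\[
\|\zeta^N(t)\|_{2,N}^2 \le T \sup_{s\in[0,T]} \|Z^N(s)\|_{2,N}^2 \; e^{(1+2L(\alpha_N+\gamma_N))T}.
\]
Two points need the deterministic adaptation. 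First, Lemma \ref{Lemma:alphagamma} must be re-established for $(\xi_{ij})$ with $\mathcal{L}(\xi_{ij}) = q(x_i^N,x_j^N;\cdot)$: the $\xi_{ij}$ are no longer identically distributed, but Hyp.~\ref{hypo_moment} still gives $\E[\xi_{ij}^2]\le M^2$, $\V[\xi_{ij}^2]\le M^4$, $\V[\xi_{ij}]\le M^2$ for every pair, so the Bienaymé–Chebyshev and fourth-moment computations go through unchanged (the independence across pairs is all that was really used, together with the uniform moment bounds), yielding the same $\PP[\alpha_N\ge 2M]\le N^{-2}$ and $\PP[\gamma_N\ge 2M]\le 5/N$. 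Second, the term $Z_i^N(t)$: here $\E[Z_i^N(t)]$ is no longer exactly zero — it equals $\int_I \bw(x_i^N,y)D(u(y,t)-u(x_i^N,t))\,dy$ minus its Riemann-sum approximation $\frac1N\sum_j \bw(x_i^N,x_j^N)D(u(x_j^N,t)-u(x_i^N,t))$, which by the $\frac12$-Hölder regularity of $\bw$ and of $u(\cdot,t)$, together with Hyp.~\ref{hyp:D}, is $O(N^{-1/2})$ in absolute value, uniformly in $i$ and $t$. So I would write $Z_i^N = (Z_i^N - \E Z_i^N) + \E Z_i^N$, bound $\frac1N\sum_i (\E Z_i^N)^2 \le C/N$ deterministically, and apply Bienaymé–Chebyshev to $\frac1N\sum_i N(Z_i^N - \E Z_i^N)^2$ exactly as in the proof of Theorem \ref{th:graph_limit} (the analogue of Lemma \ref{Lemma:sigmaY}, with $x_i^N$ replacing $X_i$, gives $\V[N(Z_i^N-\E Z_i^N)^2] = 2\sigma_Y^4 + O(1/N)$ with $\sigma_Y^2 \le M^2K^2$). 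Combining, $\PP[\sqrt N\|Z^N(t)\|_{2,N} \ge C N^{-1/2} + \text{(const)}] \le \tilde C/N$, and feeding this into the Grönwall bound together with the events $\{\alpha_N<2M\}$, $\{\gamma_N<2M\}$ gives $\PP[\|\zeta^N(t)\|_{2,N} \ge C_2' N^{-1/2}] \le \tilde C_2/N$ uniformly in $t\in[0,T]$, hence by continuity in $t$ the bound on $\|u_N-\bar u_N\|_{\mathcal{C}(0,T;L^2)}$.

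The main obstacle — and the genuinely new ingredient relative to Section \ref{Sec:Random} — is the control of the deterministic discretization errors: establishing spatial $\frac12$-Hölder regularity of the solution $u(\cdot,t)$ to \eqref{eq:graph_limit} (uniformly on $[0,T]$) from the assumptions on $g$ and $\bw$, and then using it together with the $\frac12$-Hölder assumption on $\bw$ to show that both $\|\bar u_N(\cdot,t) - u(\cdot,t)\|_{L^2(I)}$ and the Riemann-sum defect in $\E[Z_i^N(t)]$ are $O(N^{-1/2})$; the probabilistic part is a routine transcription of the earlier argument, since non-identical distribution of the $\xi_{ij}$ costs nothing given the uniform moment bound of Hyp.~\ref{hypo_moment}. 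Tracking the constants through these two sources then produces the explicit $C_2,\tilde C_2$ claimed in \eqref{eq:conv2}.
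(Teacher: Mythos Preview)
Your proposal is correct but the paper takes a different (and somewhat cleaner) route. Instead of comparing $u_i^N$ directly to $u(x_i^N,t)$ as you do, the paper introduces an auxiliary \emph{deterministic} particle system $v_i^N$ solving $\dot v_i^N = \frac1N\sum_j \bw(x_i^N,x_j^N) D(v_j^N - v_i^N)$ with the same initial data, and splits $\|u_N - u\| \le \|u_N - v_N\| + \|v_N - u\|$. The second piece is purely deterministic and is dispatched by citing an existing convergence result (Theorem~\ref{th:PT} from \cite{PT23}), which delivers the rate $N^{-1/2}$ under the $\tfrac12$-H\"older assumptions. For the first piece, the random term becomes $\tilde Z_i^N = \frac1N\sum_j (\xi_{ij} - \bw(x_i^N,x_j^N)) D(v_j^N - v_i^N)$, which has mean \emph{exactly} zero by construction, so no splitting into centered part plus Riemann-sum defect is needed and the Bienaym\'e--Chebyshev argument proceeds verbatim.

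What each approach buys: the paper's auxiliary-system trick decouples the randomness from the discretization cleanly and outsources the H\"older-regularity work on $u$ to a black-box citation; your approach is more self-contained but forces you to (i) prove the spatial $\tfrac12$-H\"older regularity of $u(\cdot,t)$ yourself and (ii) control the Riemann-sum error in $\E[Z_i^N]$ explicitly. Your centered part $Z_i^N - \E Z_i^N$ coincides in form with the paper's $\tilde Z_i^N$ (with $u(x_j^N,t)$ in place of $v_j^N$), so the probabilistic estimates are identical. Both routes give the same constants in the end; the paper's is shorter to write given the citation, yours would stand alone without it.
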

        
        We start by introducing an auxiliary problem, the heat equation on the following deterministic weighted graph $\tilde{G}_N$ : for all $(i,j)\in\elts^2$, the edge $(i,j)$ of $\tilde{G}_N$ is supplied with the weight 
        $$W_{ij}^N = \E\croch{\xi_{ij}} = \bw(x_i^N, x_j^N) = \int_{\mathbb{R}_+} w q(x_i^N, x_j^N;dw), $$
         and we study the following problem
        \begin{equation}
                \begin{cases}     
        \label{ODE2}       
        \displaystyle \frac{d}{dt} \vin(t)= \frac{1}{N} \sum_{j=1}^N \bw(x_i^N, x_j^N) D(\vjn(t) - \vin(t)),~~~~ i \in \{1, \dots, N\}\\                
       \vin(0)  =g(x_i^N),~~~~   i \in \{1, \dots, N\}.
        \end{cases}
        \end{equation}
        
Let us denote $v^N(t) = (v_1^N(t), \dots, v_N^N(t))$ the solution of \eqref{ODE2}. Let $v_N:I\times\R\rightarrow\R$ be the piecewise-constant function defined by
\[
\forall t \in \R,  \; \forall x\in I, \qquad v_N(x,t)= \sum_{i=1}^N v_i^N(t) \one_{I^N_i}(x) .
\]
In the same spirit, we define a piecewise-constant function $W_N$ on $I^2$ such that 
\[ 
\; \forall (x,y)\in I^2,\quad W_N(x,y) = \sum_{i=1}^N \sum_{i=1}^N W_{ij}^N \one_{I^N_i}(x)\one_{I^N_j}(y) . 
\]      
Then, by construction, $v_N(x,t)$ solves the following system
 \begin{equation}
\begin{cases}
\label{graph_limit2}
\partial_t v_N(x,t)  = \dsp \int_I W_N(x,y)   D(v_N(y,t)-v_N(x,t))dy\\
v_N(\cdot,0)   =  \dsp \sum_{i=1}^N g(x_i^N) \one_{I^N_i}.
 \end{cases}
\end{equation}

The convergence of the deterministic system \eqref{ODE2} can then be obtained as a direct consequence of Theorem 4 in \cite{PT23}, with additional regularity assumptions on $g$, and the first moment of the weighted random graph law, $\bw$.

\begin{theorem}{\cite{PT23}}\label{th:PT}
Suppose that $D$ is a Lipschitz continuous function satisfying Hyp. \ref{hyp:D}. Moreover, suppose that there exists $\alpha\in (0,1]$ such that $g\in C^{0,\alpha}(I)$ and that the first moment $(x,y)\mapsto\bw(x,y)$ of the weighted random graph law is also $\alpha-$Hölder with respect to $x$ and $y$.
Then, the solution $v_N$ to \eqref{graph_limit2} converges to the solution $u$ to \eqref{eq:graph_limit} as $N$ goes to $+\infty$, and
\begin{equation}
\|v_N(\cdot,t)-u(\cdot,t)\|_{L^\infty(I)} \leq \frac{2}{N^{\alpha}}(1+H(g)) e^{2tL_G},
\end{equation}
where $H(g)$ denotes the Hölder constant of $g$, $H(\bw)$ the Hölder constant of $\bw$, and $L_G:= \max(H(\bw),L)$.
\end{theorem}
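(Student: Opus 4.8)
The statement is a purely deterministic convergence (consistency) estimate: $v_N$ is the piecewise-constant solution of the expected-weight system \eqref{ODE2}, equivalently the integro-differential system \eqref{graph_limit2}, and it must be compared with the solution $u$ of the continuous graph-limit equation \eqref{eq:graph_limit}. The plan is to estimate the pointwise error $e_N(x,t) := v_N(x,t) - u(x,t)$ directly through the two integral formulations, and to close a Gronwall inequality for $t \mapsto \|e_N(\cdot,t)\|_{L^\infty(I)}$. Since $v_N$ is piecewise constant in $x$ and continuously differentiable in $t$, while $u$ belongs to $\mathcal{C}^1([0,T];L^\infty(I))$ by the well-posedness result recalled above, this supremum is a well-behaved function of $t$ and can be differentiated in the Dini sense; this disposes of the only genuine regularity technicality in handling the $L^\infty$ norm.

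First I would control the initial error. For $x \in I_i^N$ one has $v_N(x,0) = g(x_i^N)$ while $u(x,0) = g(x)$; since $|x - x_i^N| \le 1/N$ and $g \in C^{0,\alpha}(I)$, this gives $\|e_N(\cdot,0)\|_{L^\infty(I)} \le H(g)\, N^{-\alpha}$. This is the source of the term $H(g)$ in the announced prefactor. Next I would subtract \eqref{graph_limit2} from \eqref{eq:graph_limit} and split the right-hand side into a stability term and a consistency term:
\begin{equation*}
\partial_t e_N(x,t) = \underbrace{\int_I W_N(x,y) \left[ D(v_N(y,t)-v_N(x,t)) - D(u(y,t)-u(x,t)) \right] dy}_{A(x,t)} + \underbrace{\int_I \left[ W_N(x,y)-\bw(x,y) \right] D(u(y,t)-u(x,t)) \, dy}_{B(x,t)}.
\end{equation*}

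The stability term $A$ is handled by the Lipschitz continuity of $D$: one has $|A(x,t)| \le L \int_I W_N(x,y)\big(|e_N(y,t)| + |e_N(x,t)|\big)\,dy$, and since Hypothesis \ref{hypo_moment} gives $\int_I W_N(x,y)\,dy \le M$, this is bounded by a constant multiple of $\|e_N(\cdot,t)\|_{L^\infty(I)}$, which will furnish the exponential factor. The consistency term $B$ is where the discretization of the kernel appears: for $x \in I_i^N$ and $y \in I_j^N$ one has $W_N(x,y) = \bw(x_i^N,x_j^N)$, so the $\alpha$-Hölder continuity of $\bw$ yields $|W_N(x,y) - \bw(x,y)| \le H(\bw)\big(|x-x_i^N|^\alpha + |y - x_j^N|^\alpha\big) \le 2\,H(\bw)\, N^{-\alpha}$; combined with $|D| \le K$ this gives $|B(x,t)| \le C\, N^{-\alpha}$ uniformly in $x$ and $t$. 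This uniform consistency bound is the second ingredient of the prefactor $\tfrac{2}{N^\alpha}(1+H(g))$.

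Finally I would assemble these estimates into a differential inequality of the form $\tfrac{d}{dt}\|e_N(\cdot,t)\|_{L^\infty(I)} \le 2L_G\,\|e_N(\cdot,t)\|_{L^\infty(I)} + C\,N^{-\alpha}$ and invoke Gronwall's lemma, collecting the initial and consistency contributions into the announced prefactor and the exponential $e^{2tL_G}$. The point requiring the most care is obtaining the \emph{sharp} rate $L_G = \max(H(\bw),L)$: this exponent is precisely the Lipschitz constant of the graph-limit vector field $u \mapsto \int_I \bw(\cdot,y)\,D(u(y)-u(\cdot))\,dy$ in the relevant Hölder-type norm, reflecting both the $L$-Lipschitz dependence through $D$ and the $H(\bw)$-Hölder dependence of the kernel in the $x$-variable. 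Establishing that both effects can be absorbed into the single stability rate $\max(H(\bw),L)$ — which typically requires tracking, alongside $\|e_N(\cdot,t)\|_{L^\infty(I)}$, the uniform-in-time propagation of the $\alpha$-Hölder regularity of $u(\cdot,t)$ inherited from $g$ and $\bw$ — is the main obstacle; once this is in place, the remaining steps are routine Gronwall bookkeeping.
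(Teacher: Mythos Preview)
The paper does not actually prove this theorem: it is quoted verbatim from \cite{PT23} (``direct consequence of Theorem 4 in \cite{PT23}''), so there is no in-paper argument to compare against. Your proof sketch is the standard consistency--stability--Gronwall scheme and is qualitatively correct: the split into $A$ (stability) and $B$ (kernel consistency), the Lipschitz bound on $A$, the H\"older bound on $B$, and Gronwall together yield an estimate of the form $\|v_N(\cdot,t)-u(\cdot,t)\|_{L^\infty}\le C\,N^{-\alpha}e^{C't}$, which is what Theorem~\ref{th:deterministic} actually needs downstream (only the rate $N^{-1/2}$ is used there).

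The genuine gap is the one you yourself flag: your explicit bound on $A$ is $|A(x,t)|\le 2L\big(\int_I W_N(x,y)\,dy\big)\,\|e_N\|_{L^\infty}\le 2LM\,\|e_N\|_{L^\infty}$, so the Gronwall rate your argument produces is $2LM$, not $2L_G=2\max(H(\bw),L)$; likewise your bound on $B$ carries $K=\|D\|_{L^\infty}$ and $H(\bw)$, not the clean prefactor $2(1+H(g))$. Propagating the $\alpha$-H\"older seminorm of $u(\cdot,t)$, as you suggest, still leaves the factor $M$ in both the stability rate and the consistency remainder when you redo the computation, so that idea alone does not close the gap. In short, your sketch proves a correct theorem with the right $N^{-\alpha}$ rate but with different (larger) constants; recovering the \emph{exact} constants $L_G$ and $2(1+H(g))$ as stated presumably relies on the specific setup or normalizations in \cite{PT23}, and is not something your outline establishes.
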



Using this intermediate result, we can now prove our main Theorem \ref{th:deterministic} by multiple uses of the Bienaymé-Chebyshev inequality (or of its generalization).

\begin{proof}[Proof of Theorem \ref{th:deterministic}]
For all $i\in\elts$, we denote $\eta_i^N(t) = u_i^N(t) - v_i^N(t)$, and $\eta^N(t) = (\eta_1^N(t), \dots, \eta_N^N(t))$. We substract \eqref{ODE22} and \eqref{ODE2} and obtain
\[
\begin{split}
\dsp \frac{d}{dt} \eta_i^N(t)  & = \dsp  \frac{1}{N}  \pare{\sum_{j=1}^N \xi_{ij} D(u_j^N - u_i^N) - \sum_{j=1}^N  W_{ij}^N D(\vjn - \vin)} \\
&=\dsp{\frac{1}{N}   \sum_{j=1}^N \xi_{ij} \pare{D(u_j^N - u_i^N) -  D(\vjn - \vin)}} + \frac{1}{N}   \sum_{j=1}^N (\xi_{ij}- W_{ij}^N) D(v_j^N - v_i^N) .
\end{split}
\]

We multiply the previous equation by $\frac{1}{N} \eta_i^N$ and we sum over $i$ to obtain 
\begin{equation}
\frac{1}{2} \frac{d}{dt} \| \eta^N\|_{2,N}^2 = \frac{1}{N^2} \sum_{i,j=1}^N \xi_{ij} \pare{D(\ujn - \uin) - D(\vjn -\vin)} \eta_i^N + (Z^N, \eta^N)_N, 
\end{equation}
where we denote $\tZ^N= (\tZ_1^N, \dots, \tZ_N^N)$ with $\dsp \tZ_i^N := \frac{1}{N}   \sum_{j=1}^N \pare{ \xi_{ij}  - W_{ij}^N} D(\vjn - \vin).$\\
Let us deal with the first term. As in the proof of Theorem \ref{th:graph_limit}, 
$$\begin{array}{rcl}
\dsp \left| \frac{1}{N^2}  \sum_{i=1}^N  \pare{ \sum_{j=1}^N \xi_{ij} \pare{D(u_j^N - u_i^N) -  D(\vjn - \vin)} } \eta_i^N\right|& \leq & \dsp \frac{ML}{N^2} \sum_{i,j=1}^N  |\eta_j^N(t) - \eta_i^N(t)| |\eta_i^N(t)| \\
& \leq &\dsp  L(\talpha+\tgamma ) \| \eta^N(t)\|_{2,N}^2,
\end{array}$$
with $ \talpha :=  \left( \frac{1}{N^2}\sum_{i=1}^N \sum_{j=1}^N \xi_{ij} ^2 \right)^{\frac{1}{2}}$ and $\tgamma = \max_{i\in\{1,\cdots,N\}} \frac{1}{N}\sum_{j=1}^N \xi_{ij}.$

From the previous inequalities, we deduce that 
\[
 \dsp \frac{d}{dt} \| \eta^N\|_{2,N}^2 \leq  \| \tZ^N\|_{2,N} ^2 + (1 + 2L(\talpha+\tgamma)) \| \eta^N\|_{2,N}^2, 
\]
and Gronwall's lemma yields
\begin{equation}
\label{ineg_gronwall2}
\| \eta^N(t)\|_{2,N}^2 \leq T \sup_{t\in [0,T]} \| {\tZ}^N(t)\|_{2,N}^2 e^{ (1 + 2L(\talpha+\tgamma))T}.
\end{equation}
As in the proof of Theorem \eqref{th:graph_limit}, we aim to use the Bienaymé-Chebyshev inequality to bound  $\tilde{\alpha}_N$ and $\tilde{\gamma}_N$.  Notice however the key difference introduced by the deterministic choice of the points $(x_i^N)_{i\in \elts}$. The random variables $({\xi_{ij}^2})_{i,j\in\elts}$ no longer have the same law, and for all $i,j\in\elts$,
\[
{\E\croch{\xi_{ij}^2} = \int_I w^2 q(x_i^N,x_j^N,dw), \qquad  \V\croch{\xi_{ij}^2} = \int_I w^4 q(x_i^N,x_j^N,dw) - \left(\int_I w^2 q(x_i^N,x_j^N,dw)\right)^2.}
\]
However, using the uniform bounds on the moments of $q$ (Hyp. \ref{hypo_moment}), we can follow the same arguments as in Lemma \ref{Lemma:alphagamma}, and prove
\[
\dsp\PP\croch{\tilde{\alpha}_N \geq 2 M}  \leq \frac{1}{N^2}, \qquad \text{ and }\qquad
 \PP\croch{\tilde{\gamma}_N \geq 2 M}  \leq \frac{5}{N}.
\]
This allows us to obtain for $N$ large enough, as in the proof of Theorem \ref{th:graph_limit}:
\[
   \mathbb{P}\left(\sqrt{N} \|\tZ^N(t) \|_{2,N} e^{(1+2L(\tilde{\gamma}_N+\tilde{\alpha}_N))T} \geq \sqrt{1 + c_1}e^{(\frac{1}{2}+4ML)T)}\right)  \leq \frac{\tilde{C}_1}{N},
\]
where $c_1=M^2K^2$ and $\tilde{C}_1 = 3M^4K^4+6$
(see Appendix \ref{App:2} for some details of the computations).
Moreover, it holds
$$\begin{array}{rcl}
\dsp \| u_N - u \|_{\mathcal{C}(0,T;L^2(I))} & = & \dsp \sup_{t \in [0,T]} \pare{\int_I |u_N(t,x) - u(t,x)|^2 dx} ^{1/2}\\
 & \leq &  \dsp \sup_{t \in [0,T]} \| u_N - v_N \|_{L^2(I)} +  \sup_{t \in [0,T]} \| v_N - u \|_{L^2(I)}\\
  & = & \dsp \sup_{t \in [0,T]} \pare{\sum_{i=1}^N \int_{I_i} | u_i^N(t) - v_i^N(t)|^2 dx}^{1/2} + \| v_N - u \|_{\mathcal{C}(0,T;L^2(I))}\\
   & =& \dsp \sup_{t \in [0,T]}  \| \eta^N \|_{2,N} + \|v_N - u \|_{\mathcal{C}(0,T;L^2(I))}.
\end{array}$$
The second term is deterministic, and from Theorem \ref{th:PT}, converges to zero with a rate of $N^{-\frac{1}{2}}$:
More precisely, denoting $c_4 :=2(1+H(g)) e^{2TL_G}$, it holds
\[ \PP \croch{ \|v_N-u\|_{\mathcal{C}(0,T;L^2(I))} \geq \frac{c_4}{\sqrt{N}}} = 0.\]
Finally, denoting $\dsp C_2 := \max \left(2c_4, 2 \sqrt{T}\sqrt{1+ c_1} e^{(\frac{1}{2}+4ML)T} \right)$, we obtain 
\[\begin{array}{l}
\dsp  \PP \croch{ \|u_N-u\|_{\mathcal{C}(0,T;L^2(I))} \geq \frac{C_2}{\sqrt{N}}} \\
 \leq  \dsp \PP \croch{\left\{ \sup_{t \in [0,T]} \|\eta\|_{2,N} \geq \frac{C_2}{2\sqrt{N}}\right\}\bigcup \left\{ \|v_N-u\|_{\mathcal{C}(0,T;L^2(I))} \geq \frac{C_2}{2\sqrt{N}}\right\}} \\
  \leq  \dsp \PP \croch{ \sup_{t \in [0,T]} \|\eta\|_{2,N} \geq \frac{C_2}{2\sqrt{N}}} + \PP \croch{  \|v_N-u\|_{\mathcal{C}(0,T;L^2(I))} \geq \frac{C_2}{2\sqrt{N}}} \\
   \leq \dsp \PP \croch{ \sup_{t \in [0,T]} \|\tZ^N\|_{2,N} \geq \sqrt{T}\frac{\sqrt{1+ c_1}}{\sqrt{N}} e^{(\frac{1}{2}+4ML)T}} + \PP \croch{  \|v_N-u\|_{\mathcal{C}(0,T;L^2(I))} \geq \frac{C_2}{2\sqrt{N}}} \\
   \leq \dsp \frac{\tilde{C}_1}{N},
\end{array}
\]
 which concludes the proof.
\end{proof}


\section{Blinking systems on weighted random graphs}\label{Sec:blinking}

In what we have done previously, the weighted random graph is fixed at $t=0$ and stays constant with time, even if the edges' weights are randomly chosen. In this section, we will now be interested in time-dependent random graphs. 

We will focus on blinking systems in the context of a weighted random graph generated by a random sequence. As in Section \ref{Sec:Random}, let $(X_i)_{i\in\N}$ be a sequence of i.i.d. random variables uniformly distributed on $I$. \\
We now consider a time-dependent piecewise-constant random variable $\xij(t)$ defined as follows : for all $k \in \N$, for all $t \in [k, k+1)$, $\xij(t)=\xij^k$ with $\mathcal{L}(\xi_{ij}^k| \tilde{X}) = q(X_i,X_j,\cdot)$. We then study the following blinking system : given $T>0$, $n \in \N^*$, and $ \varepsilon = \frac{T}{n}$,
 \begin{equation}\label{eq:ODEbl}
 \tag{$S_{N,\varepsilon}$}
\begin{cases}
\displaystyle \frac{d}{dt} \uine(t) = \frac{1}{N} \sum_{j=1}^N  \xij\left(\frac{t}{\varepsilon}\right)  D(\ujne(t) - \uine(t)),~~~~ i \in \{1, \dots, N\}\\ 
\uine(0)  =g(X_i^N),~~~~   i \in \{1, \dots, N\}
\end{cases}
\end{equation}
Hence, given $\varepsilon >0$, the graph associated with \eqref{eq:ODEbl} is redefined on each interval $[k\varepsilon, (k+1) \varepsilon)$, $k \in \{0, \dots, n-1\}$.
Applying results from Averaging theory (see \cite[Section 3.2]{Skorokhod2002}), we can show that as $\varepsilon \to 0$, the time-dependent system \eqref{eq:ODEbl} converges to the averaged problem 
 \begin{equation}\label{eq:ODEav}
\tag{$S_{N,\text{Av}}$}
\begin{cases}
\displaystyle \frac{d}{dt} \uina(t) = \frac{1}{N} \sum_{j=1}^N  \left(\int_{\R_+} w q(X_i,X_j;dw)\right) D(\ujna(t) - \uina(t)),~~~~ i \in \{1, \dots, N\}\\ 
\uina(0)  =g(X_i^N),~~~~   i \in \{1, \dots, N\},
\end{cases}
\end{equation}
where $\int_{\R_+} w q(X_i,X_j;dw) = \E\croch{\xi_{ij}^k| X_i,X_j}$ for all $k\in\{1,\cdots, n-1\}$.
The exact statement writes 
\begin{prop}\label{Prop:limepsilon}
Let $g\in L^\infty(I)$, $D$ satisfying Hyp.\ref{hyp:D}. Let $N\in\N$, $(X_i)_{i\in\elts}$ be a sequence of i.i.d. random variables, and $u^{N,\varepsilon}$ be the solution to \eqref{eq:ODEbl}. Then as $\varepsilon$ goes to zero, it satisfies
\[\mathbb{P} \left\{\lim_{\varepsilon \to 0} \sup_{s \leq T} \left| u^{N,\varepsilon}(s) - u^{N,\text{Av}}(s)\right| = 0\right\} = 1,\]
where  $u^{N,\text{Av}}$ is the solution to \eqref{eq:ODEav}.
\end{prop}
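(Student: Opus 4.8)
The plan is to invoke the averaging principle for ordinary differential equations with rapidly oscillating right-hand sides, as formulated in \cite[Section 3.2]{Skorokhod2002}, conditionally on the random variables $\tilde X = (X_i)_{i\in\elts}$. First I would fix a realization of $(X_i)_{i\in\elts}$, so that the coefficients $q(X_i,X_j;\cdot)$ become deterministic probability measures with first four moments bounded by $M$ (by Hyp.~\ref{hypo_moment}). Conditionally on $\tilde X$, the family $(\xi_{ij}^k)_{k\in\N}$ is an i.i.d.\ sequence in $k$ with common mean $\bar w(X_i,X_j) = \int_{\R_+} w\, q(X_i,X_j;dw)$. Writing the blinking system \eqref{eq:ODEbl} in the rescaled time $\tau = t/\varepsilon$, the vector field is $F\bigl(u,\tau\bigr)$ with $F_i(u,\tau) = \frac1N\sum_j \xi_{ij}(\lfloor\tau\rfloor)\, D(u_j-u_i)$, which is piecewise-constant in $\tau$ on unit intervals, uniformly bounded (by $MK$, using Cauchy--Schwarz and the first-moment bound together with Hyp.~\ref{hyp:D}), and globally Lipschitz in $u$ with constant independent of $\tau$ (since $|\xi_{ij}| $ summed against $\frac1N$ is controlled by the empirical averages $\frac1N\sum_j\xi_{ij}$, which are a.s.\ finite for fixed $N$).

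The core step is to verify the averaging hypothesis: for each fixed $u$ in a bounded set,
\[
\frac{1}{S}\int_0^{S} F_i(u,\tau)\, d\tau \;\xrightarrow[S\to\infty]{}\; \bar F_i(u) := \frac1N\sum_{j=1}^N \bar w(X_i,X_j)\, D(u_j-u_i),
\]
almost surely. Since $F_i(u,\cdot)$ is constant equal to $\frac1N\sum_j \xi_{ij}^k D(u_j-u_i)$ on $[k,k+1)$, this time average is exactly the Cesàro average $\frac1K\sum_{k=0}^{K-1}\frac1N\sum_j \xi_{ij}^k D(u_j-u_i)$ (up to the negligible fractional part), and the strong law of large numbers applied to the i.i.d.\ (in $k$) sequence $\bigl(\frac1N\sum_j \xi_{ij}^k D(u_j-u_i)\bigr)_k$ — which has finite mean — gives convergence to $\bar F_i(u)$ for each fixed $u$, on an event of probability one. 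A standard argument (continuity/Lipschitz control of both $F(\cdot,\tau)$ and $\bar F$, plus a countable dense set of $u$'s) upgrades this to locally uniform convergence in $u$ on the common probability-one event. Then the classical averaging theorem (Bogolyubov-type, as in \cite{Skorokhod2002}) asserts that on the fixed finite horizon $[0,T]$, $\sup_{s\le T}|u^{N,\varepsilon}(s) - \bar u(s)| \to 0$ as $\varepsilon\to0$, where $\bar u$ solves $\dot{\bar u} = \bar F(\bar u)$ with $\bar u(0) = (g(X_i^N))_i$ — and this is precisely \eqref{eq:ODEav}. Taking the intersection over $i\in\elts$ of these probability-one events (a finite intersection, since $N$ is fixed) yields the claim.

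I expect the main obstacle to be the careful bookkeeping needed to apply the averaging theorem in a form that permits a piecewise-constant, merely $L^\infty$-in-$\tau$ vector field (rather than a continuous or almost-periodic one), and to handle the fact that the ``ergodic'' behavior in $\tau$ here comes from the law of large numbers for the i.i.d.\ blinking sequence rather than from a deterministic time average — so one must be explicit about the probability-one event on which the time-average convergence holds and check it can be chosen uniformly in the (bounded region of) state variable $u$. The a priori bound keeping the trajectories of \eqref{eq:ODEbl} in a fixed bounded set uniformly in $\varepsilon$ is easy: $\frac{d}{dt}\sum_i (u_i^{N,\varepsilon})^2 \le$ (a constant depending on $N$, $M$ via $\gamma_N$, and $K$), or even more simply each $|u_i^{N,\varepsilon}(t)|$ stays bounded by $\|g\|_\infty + t\,M K$ by a direct Gronwall/comparison estimate, so the Lipschitz and boundedness constants only ever need to be controlled on a fixed compact set. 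Everything else is a direct citation of \cite[Section 3.2]{Skorokhod2002}.
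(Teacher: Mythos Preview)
Your approach is the same as the paper's: the proof there is a single sentence stating that the result ``is a direct consequence of the averaging results presented in \cite[Section 3.2]{Skorokhod2002},'' with no further detail, so your more explicit verification of the averaging hypothesis via the SLLN on the i.i.d.\ blinking sequence is exactly the intended argument. One small correction to your sketch: the vector field $F(u,\tau)$ is \emph{not} uniformly bounded by $MK$ nor Lipschitz in $u$ with a $\tau$-independent constant, since the $\xi_{ij}^k$ themselves are unbounded (only their moments are controlled by Hyp.~\ref{hypo_moment}); this is precisely why one needs the stochastic averaging framework of \cite{Skorokhod2002}, which works with time-average/moment bounds rather than pointwise ones, and is the ``bookkeeping'' obstacle you correctly anticipate.
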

\begin{proof}
The proof of Prop. \ref{Prop:limepsilon} is a direct consequence of the averaging results presented in \cite{Skorokhod2002}.
\end{proof}

\begin{rem}
When directly applying the results of averaging theory from \cite[Section 3.2]{Skorokhod2002} , it does not provide a rate for the convergence of \eqref{eq:ODEbl} to \eqref{eq:ODEav}. A rate can be achieved in the case of blinking systems taking a finite number of values (see \cite{belykh_rate}).
\end{rem}

Notice that the limit system \eqref{eq:ODEav} is no longer defined on a time-varying network. 
Moreover, it would be deterministic if the sequence $(X_i)_{i\in\elts}$ were deterministic.
Thus, so far we have considered four models : 
\begin{itemize}
\item the system on a fixed weighted random graph \eqref{eq:ODEs} 
\item its limit as $N$ goes to infinity, i.e. the graph limit equation \eqref{eq:graph_limit}
\item the blinking system \eqref{eq:ODEbl} 
\item its limit as $\varepsilon$ goes to zero, i.e. the associated averaged system \eqref{eq:ODEav}.
\end{itemize}
Naturally, we are interested in the relation between them. 
Interestingly, we can prove that \eqref{eq:ODEav} converges as $N \to + \infty$ to the same limit \eqref{eq:graph_limit} as \eqref{eq:ODEs} by seeing  \eqref{eq:ODEav}  as a specific case of \eqref{eq:ODEs}  where $q(x,y;dw) = \delta (w=\bw(x,y))$. 
The known relations between all these various systems are summarized in Figure \ref{fig:Ergodicity?}. The question of linking \eqref{eq:ODEbl} to \eqref{eq:graph_limit} by taking first the limit in $N$ remains open, and the goal of this section is to answer it.

\begin{figure}[h!]
\centering
\includegraphics[scale=1]{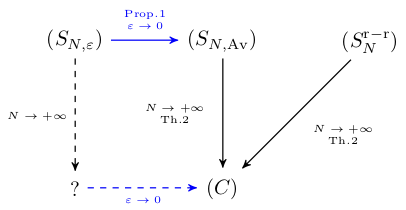}
\caption{Existing and missing links between systems \eqref{eq:ODEbl}, \eqref{eq:ODEav}, \eqref{eq:ODEs} and \eqref{eq:graph_limit}}
\label{fig:Ergodicity?}
\end{figure}

To study the convergence of \eqref{eq:ODEbl} as $N$ goes to infinity, a natural approach would be to apply the convergence method from \eqref{eq:ODEs} to \eqref{eq:graph_limit} on each time interval $[k\varepsilon, (k+1) \varepsilon)$.
On the first interval $[0,\varepsilon)$, the convergence \eqref{eq:ODEbl} to \eqref{eq:graph_limit} as $N$ goes to infinity is indeed a direct application of Theorem \ref{th:graph_limit}. However, in subsequent intervals, the initial condition $\uine(k\varepsilon)$ does not match $u(k\varepsilon,X_i^N)$, as required to apply Theorem \ref{th:graph_limit} again. 
Thus, our approach consists in resetting the initial conditions at the beginning of each time interval, hoping that the error that we commit by doing so is small enough. 
In that purpose, we introduce the following intermediate system : $\forall t \in (k\varepsilon, (k+1)\varepsilon]$,
 \begin{equation}\label{eq:ODENek}
\tag{$\tilde{{S}}_{N,\varepsilon,k}$}
\begin{cases}
\displaystyle \frac{d}{dt} \uinei(t) = \frac{1}{N} \sum_{j=1}^N  \xij\left(\frac{t}{\varepsilon}\right)  D(\ujnei(t) - \uinei(t)),~~~~ i \in \{1, \dots, N\}\\ 
\uinei(k\varepsilon)  =u(k\varepsilon,X_i^N),~~~~   i \in \{1, \dots, N\}.
\end{cases}
\end{equation}
We consider that on each time interval $[k\varepsilon, (k+1)\varepsilon)$, the systems  \eqref{eq:ODEbl} and  \eqref{eq:ODENek} are constructed using the same random variables. 
We can then show  that the solutions to the two systems are indeed close, in the following sense: 

\begin{lemma}\label{gronwall}
For all $k \in \{0, \dots, n-1\}$, for all $t \in [k\varepsilon, (k+1)\varepsilon)$, the solutions $u^{N,\varepsilon}$ and $\tilde{u}^{N,\varepsilon,k}$ to \eqref{eq:ODEbl} and \eqref{eq:ODENek} satisfy:
\[
\PP\croch{  \|u^{N,\varepsilon}(t)-\tilde{u}^{N,\varepsilon,k}(t)\|_{2,N} \leq e^{ 4L M \varepsilon} \| u^{N,\varepsilon}(k\varepsilon)-\tilde{u}^{N,\varepsilon,k}(k\varepsilon) \|_{2,N} } \geq 1 - \frac{6}{N},
\]
where $L$ is the  Lipschitz constant of $D$ and $M$ is the uniform bound on the moments of $q$, as defined in \eqref{eq:moments}. 
\end{lemma}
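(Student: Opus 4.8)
The plan is to run the same energy estimate as in the proof of Theorem~\ref{th:graph_limit}, but pathwise on the single interval $[k\varepsilon,(k+1)\varepsilon)$, and then invoke Lemma~\ref{Lemma:alphagamma} to control the random coefficients that appear. Fix $k\in\{0,\dots,n-1\}$ and set, for $t\in[k\varepsilon,(k+1)\varepsilon)$, $\delta_i^N(t):=u_i^{N,\varepsilon}(t)-\tilde u_i^{N,\varepsilon,k}(t)$ and $\delta^N=(\delta_1^N,\dots,\delta_N^N)$. The key point is that on this interval both \eqref{eq:ODEbl} and \eqref{eq:ODENek} are driven by the \emph{same} random weights $\xij(t/\varepsilon)=\xi_{ij}^k$, which are moreover constant in $t$ on $[k\varepsilon,(k+1)\varepsilon)$ since $t/\varepsilon\in[k,k+1)$. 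Subtracting the two systems therefore makes the weights appear only in front of a difference of $D$-terms:
\[
\frac{d}{dt}\delta_i^N(t) = \frac{1}{N}\sum_{j=1}^N \xi_{ij}^k\left[D(u_j^{N,\varepsilon}-u_i^{N,\varepsilon}) - D(\tilde u_j^{N,\varepsilon,k}-\tilde u_i^{N,\varepsilon,k})\right].
\]

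Next I would multiply by $\frac{1}{N}\delta_i^N$, sum over $i$, and bound the right-hand side using the Lipschitz estimate $|D(u_j^{N,\varepsilon}-u_i^{N,\varepsilon})-D(\tilde u_j^{N,\varepsilon,k}-\tilde u_i^{N,\varepsilon,k})|\le L(|\delta_j^N|+|\delta_i^N|)$ followed by the very same chain of Cauchy--Schwarz inequalities as in \eqref{eq:alphaNgammaN} (with $\zeta^N$ replaced by $\delta^N$ and $\xi_{ij}$ by $\xi_{ij}^k$). This yields
\[
\frac{1}{2}\frac{d}{dt}\|\delta^N(t)\|_{2,N}^2 \le L\,(\alpha_N^k+\gamma_N^k)\,\|\delta^N(t)\|_{2,N}^2,
\]
where $\alpha_N^k:=\big(\frac{1}{N^2}\sum_{i,j}(\xi_{ij}^k)^2\big)^{1/2}$ and $\gamma_N^k:=\max_i\frac{1}{N}\sum_j\xi_{ij}^k$, both constant in $t$ on the interval.

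Then I would invoke Lemma~\ref{Lemma:alphagamma}: since $(\xi_{ij}^k)_{i,j\in\elts}$ are the weights of a $q$-weighted random graph generated by the random sequence $X$, for $N$ large enough $\PP[\alpha_N^k\ge 2M]\le \frac{1}{N^2}$ and $\PP[\gamma_N^k\ge 2M]\le \frac{5}{N}$. A union bound gives $\PP[\mathcal E_k]\ge 1-\frac{1}{N^2}-\frac{5}{N}\ge 1-\frac{6}{N}$ for the event $\mathcal E_k:=\{\alpha_N^k<2M\}\cap\{\gamma_N^k<2M\}$. On $\mathcal E_k$ the differential inequality becomes $\frac{d}{dt}\|\delta^N(t)\|_{2,N}^2\le 8LM\|\delta^N(t)\|_{2,N}^2$, so Gronwall's lemma over $[k\varepsilon,(k+1)\varepsilon)$ gives $\|\delta^N(t)\|_{2,N}^2\le e^{8LM\varepsilon}\|\delta^N(k\varepsilon)\|_{2,N}^2$, hence $\|\delta^N(t)\|_{2,N}\le e^{4LM\varepsilon}\|\delta^N(k\varepsilon)\|_{2,N}$, which is the claim since $\delta^N(k\varepsilon)=u^{N,\varepsilon}(k\varepsilon)-\tilde u^{N,\varepsilon,k}(k\varepsilon)$.

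There is no genuine obstacle here: the argument is a direct transcription of the energy/Gronwall estimate already used for Theorem~\ref{th:graph_limit}, the only new ingredients being that the blinking weights are frozen on $[k\varepsilon,(k+1)\varepsilon)$ (so Lemma~\ref{Lemma:alphagamma} applies to the single i.i.d. family $(\xi_{ij}^k)$), and that the ``good event'' $\mathcal E_k$ only needs to hold for this one value of $k$, which is what produces the probability $1-\tfrac{6}{N}$ independent of $n$. The only mildly delicate bookkeeping is checking that the Cauchy--Schwarz chain from \eqref{eq:alphaNgammaN} transfers verbatim.
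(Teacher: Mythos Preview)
Your proposal is correct and essentially identical to the paper's proof: both subtract the two systems (which share the same frozen weights $\xi_{ij}^k$ on the interval), run the energy estimate and Cauchy--Schwarz chain from \eqref{eq:alphaNgammaN} to obtain $\frac{d}{dt}\|\delta^N\|_{2,N}^2\le 2L(\alpha_N^k+\gamma_N^k)\|\delta^N\|_{2,N}^2$, apply Gronwall, and then invoke Lemma~\ref{Lemma:alphagamma} with a union bound to get the probability $1-\frac{6}{N}$. The only cosmetic difference is notation ($\delta^N$ versus the paper's $U_N$).
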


\begin{proof}
Let $k \in \{0, \dots, n-1\}$ and $t \in [k\varepsilon, (k+1)\varepsilon)$. Let $U_N(t):=u^{N,\varepsilon}-\tilde{u}^{N,\varepsilon,k}$.
Since $u^{N,\varepsilon}$ and $\tilde{u}^{N,\varepsilon,k}$ satisfy the same differential equation, following the computation \eqref{eq:alphaNgammaN} in the proof of Theorem \ref{th:graph_limit}, it holds with probability 1:
\[
\frac{d}{dt} \| u^{N,\varepsilon}(t)-\tilde{u}^{N,\varepsilon,k}(t) \|_{2,N}^2 
\leq 2L (\alpha^k_N+\gamma^k_N)  \| u^{N,\varepsilon}(t)-\tilde{u}^{N,\varepsilon,k}(t) \|_{2,N}^2 ,
\]
where $ \alpha^k_N := \displaystyle \left( \frac{1}{N^2}\sum_{i=1}^N \sum_{j=1}^N (\xi^k_{ij}) ^2 \right)^{\frac{1}{2}}$ and $\displaystyle \gamma^k_N = \max_{i\in\{1,\cdots,N\}} \frac{1}{N}\sum_{j=1}^N \xi^k_{ij}.$
From Gronwall's lemma, for all $t \in [k\varepsilon, (k+1)\varepsilon)$,
\[
\| u^{N,\varepsilon}(t)-\tilde{u}^{N,\varepsilon,k}(t) \|_{2,N}^2 
\leq e^{2 L (\alpha^k_N+\gamma^k_N) \varepsilon} \| u^{N,\varepsilon}(k\varepsilon)-\tilde{u}^{N,\varepsilon,k}(k\varepsilon) \|_{2,N}^2.
\]
Since $\alpha^k_N+\gamma^k_N\leq 4M$ implies 
\[
\| u^{N,\varepsilon}(t)-\tilde{u}^{N,\varepsilon,k}(t) \|_{2,N}^2 
\leq e^{ 8LM \varepsilon} \| u^{N,\varepsilon}(k\varepsilon)-\tilde{u}^{N,\varepsilon,k}(k\varepsilon) \|_{2,N}^2,
\]
it holds
\[
\PP\croch{  \|u^{N,\varepsilon}(t)-\tilde{u}^{N,\varepsilon,k}(t)\|_{2,N}^2 \leq e^{ 8L M \varepsilon} \| u^{N,\varepsilon}(k\varepsilon)-\tilde{u}^{N,\varepsilon,k}(k\varepsilon) \|_{2,N}^2 }
\geq 
\PP\croch{\alpha^k_N+\gamma^k_N\leq 4M}.
\]
Moreover, 
\[
\PP\croch{\alpha^k_N+\gamma^k_N > 4M} \leq \PP\croch{\{\alpha^k_N >2M\}\cup \{ \gamma^k_N > 2M\} } \leq \PP\croch{\alpha^k_N >2M} +  \PP\croch{ \gamma^k_N > 2M} \leq \frac{6}{N}
\]
from Lemma \ref{Lemma:alphagamma}.
Thus, 
\[
\PP\croch{  \|u^{N,\varepsilon}(t)-\tilde{u}^{N,\varepsilon,k}(t)\|_{2,N}^2 \leq e^{ 8L M \varepsilon} \| u^{N,\varepsilon}(k\varepsilon)-\tilde{u}^{N,\varepsilon,k}(k\varepsilon) \|_{2,N}^2 }
\geq 
\PP\croch{\alpha^k_N+\gamma^k_N\leq 4M}
\geq 
1-\frac{6}{N}.
\]
\end{proof}

Using this intermediate system, we are able to prove the main result of this Section - as $N\varepsilon$ converges to infinity, the solution to \eqref{eq:ODEbl} converges to the solution to \eqref{eq:graph_limit}, in the following sense:

\begin{theorem}\label{th:conv_blinking}
Let $T>0$, $\varepsilon>0$ be given. Let $X =(X_i)_{i\in\N}$ be a sequence of i.i.d. random variables and for all $N \in \N$, let $X^N =(X_i)_{1 \leq i \leq N}$. Let $\xij(t) = \xij^k$ for all $t \in [k\varepsilon, (k+1)\varepsilon)$, $k \in \{0, \dots, n-1\}$ where $\mathcal{L}(\xi_{ij}^k| {X^N}) = q(X_i,X_j,\cdot)$.
Let $u^{N,\varepsilon}$ be the solution to \eqref{eq:ODEbl} and let $u$ be the solution to \eqref{eq:graph_limit}.
Then, 
\[
\mathbb{P}\croch{ \sup_{t \in [0,T]} \| u^{N,\varepsilon}(t) - P_{X^N}u(t,\cdot) \|_{2,N} \leq \frac{C_3(T)}{\sqrt{ N \varepsilon}} }\geq 1- \frac{\tilde{C}_3(T)}{N\varepsilon}.
\]
where 
$C_3(T):= \sqrt{1+M^2 K^2}e^{(\frac{1}{2}+4ML)T}\frac{e^{4MLT}-1}{4ML}$
and
$\tilde{C}_3(T) := (12+3M^4 K^4)T$ .
\end{theorem}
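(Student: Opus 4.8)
The plan is to telescope the error over the $n = T/\varepsilon$ blinking intervals, using the intermediate system \eqref{eq:ODENek} as a bridge on each interval. On the interval $[k\varepsilon, (k+1)\varepsilon)$, the solution $\tilde u^{N,\varepsilon,k}$ starts from the correct macroscopic data $u(k\varepsilon, X_i^N)$, so applying Theorem \ref{th:graph_limit} over the short horizon $\varepsilon$ (with the initial time shifted to $k\varepsilon$) controls $\|\tilde u^{N,\varepsilon,k}(t) - P_{X^N}u(t,\cdot)\|_{2,N}$ with high probability, at rate $1/\sqrt{N}$ and with an exponential prefactor in $\varepsilon$ rather than in $T$. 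Meanwhile, Lemma \ref{gronwall} says that $\|u^{N,\varepsilon}(t) - \tilde u^{N,\varepsilon,k}(t)\|_{2,N} \le e^{4LM\varepsilon}\|u^{N,\varepsilon}(k\varepsilon) - \tilde u^{N,\varepsilon,k}(k\varepsilon)\|_{2,N}$ on a good event of probability $\ge 1 - 6/N$, and by construction $\tilde u^{N,\varepsilon,k}(k\varepsilon) = P_{X^N}u(k\varepsilon,\cdot)$, so the right-hand side is exactly $e^{4LM\varepsilon}\|\zeta^N_k(k\varepsilon)\|_{2,N}$ where I write $\zeta^N_k(t) := u^{N,\varepsilon}(t) - P_{X^N}u(t,\cdot)$ for the global error.

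The key recursion is then obtained by the triangle inequality at $t = (k+1)\varepsilon$:
\[
\|\zeta^N_{k+1}((k+1)\varepsilon)\|_{2,N} \le \|u^{N,\varepsilon} - \tilde u^{N,\varepsilon,k}\|_{2,N} + \|\tilde u^{N,\varepsilon,k} - P_{X^N}u\|_{2,N} \le e^{4LM\varepsilon}\|\zeta^N_k(k\varepsilon)\|_{2,N} + \frac{a_\varepsilon}{\sqrt N},
\]
on the intersection of the two good events, where $a_\varepsilon := \sqrt{\varepsilon}\sqrt{1+M^2K^2}\,e^{(\frac12 + 4ML)\varepsilon}$ is the single-interval constant coming from $C_1$ in Theorem \ref{th:graph_limit} with $T$ replaced by $\varepsilon$. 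Iterating this affine recursion from $\|\zeta^N_0(0)\|_{2,N} = 0$ gives
\[
\|\zeta^N_n(n\varepsilon)\|_{2,N} \le \frac{a_\varepsilon}{\sqrt N}\sum_{j=0}^{n-1} e^{4LM\varepsilon j} = \frac{a_\varepsilon}{\sqrt N}\cdot\frac{e^{4LM\varepsilon n}-1}{e^{4LM\varepsilon}-1} \le \frac{a_\varepsilon}{\sqrt N}\cdot\frac{e^{4MLT}-1}{4ML\varepsilon},
\]
using $e^{4LM\varepsilon}-1 \ge 4LM\varepsilon$ and $n\varepsilon = T$; to pass to the supremum over all $t\in[0,T]$ one uses Lemma \ref{gronwall} and the single-interval estimate on the running interval as well, which only changes the prefactor. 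Simplifying $a_\varepsilon/\varepsilon = \sqrt{1+M^2K^2}\,e^{(\frac12+4ML)\varepsilon}/\sqrt{\varepsilon} \le \sqrt{1+M^2K^2}\,e^{(\frac12+4ML)T}/\sqrt{\varepsilon}$ yields the claimed bound with $C_3(T) = \sqrt{1+M^2K^2}\,e^{(\frac12+4ML)T}\,\frac{e^{4MLT}-1}{4ML}$ after regrouping the $1/\sqrt\varepsilon$ with the $1/\sqrt N$ into $1/\sqrt{N\varepsilon}$.

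For the probability bound, one must control the union of all the bad events. On each of the $n$ intervals there is a bad event from Lemma \ref{gronwall} of probability $\le 6/N$ and a bad event from (the interval version of) Theorem \ref{th:graph_limit} of probability $\le \tilde C_1/N = (3M^4K^4+6)/N$; summing over $k = 0,\dots,n-1$ gives a total failure probability $\le n(12 + 3M^4K^4)/N = (12+3M^4K^4)T/(N\varepsilon)$, which is exactly $\tilde C_3(T)/(N\varepsilon)$. The main obstacle I anticipate is bookkeeping rather than conceptual: Theorem \ref{th:graph_limit} as stated starts the clock at $t=0$ with matched data, so I need to check carefully that its proof (the Grönwall estimate \eqref{eq:zeta} plus the Bienaymé–Chebyshev control of $Z^N$, $\alpha_N$, $\gamma_N$) applies verbatim on a shifted interval $[k\varepsilon,(k+1)\varepsilon)$ with the weights $\xi_{ij}^k$ of that interval and with initial data $u(k\varepsilon,\cdot)$ — this is true because the estimate of $\|Z^N\|_{2,N}$ only uses the solution $u$ to \eqref{eq:graph_limit} and the moment bounds on $q$, neither of which depends on the interval — and that the various "$N$ large enough" thresholds can be taken uniform in $k$. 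A secondary point is ensuring the events are genuinely combined correctly (the $\{\alpha_N^k + \gamma_N^k \le 4M\}$ event is shared between Lemma \ref{gronwall} and the single-interval application of Theorem \ref{th:graph_limit}), so the constant $\tilde C_3$ is not double-counted beyond what is claimed.
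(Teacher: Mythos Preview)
Your proposal is correct and takes essentially the same approach as the paper: both use the intermediate system $\tilde u^{N,\varepsilon,k}$ as a bridge on each blinking interval, apply Theorem~\ref{th:graph_limit} over horizon~$\varepsilon$ to control $\|\tilde u^{N,\varepsilon,k}-P_{X^N}u\|_{2,N}$, use Lemma~\ref{gronwall} to propagate the mismatch $\|u^{N,\varepsilon}-\tilde u^{N,\varepsilon,k}\|_{2,N}$, and then iterate over the $n=T/\varepsilon$ intervals while summing the failure probabilities. The only cosmetic difference is that the paper packages this as a formal induction on the probability estimate $\mathbb{P}\bigl[\sup_{t\in[k\varepsilon,(k+1)\varepsilon]}\|\zeta^N\|_{2,N}>\frac{C_1(\varepsilon)}{\sqrt N}\sum_{\ell=0}^k e^{4LM\varepsilon\ell}\bigr]\le \frac{(k+1)(6+\tilde C_1)}{N}$, splitting each step via $\{U_N(t)\le e^{4LM\varepsilon}U_N((k+1)\varepsilon)\}$ and its complement, whereas you phrase it as a pathwise affine recursion on the intersection of good events followed by a single union bound; the constants and the final simplification of the geometric sum are identical.
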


\begin{proof}[Proof of Theorem \ref{th:conv_blinking}]
Let $u^{N,\varepsilon}$ be the solution to \eqref{eq:ODEbl} and $\tilde{u}^{N,\varepsilon,k}$ the solution to \eqref{eq:ODEav} on each interval $[k\varepsilon,(k+1)\varepsilon)$.
We start by proving by induction that 
\begin{equation}\label{prop_rec}
\mathbb{P} \croch{ \sup_{t \in [k\varepsilon, (k+1)\varepsilon]} \| u^{N,\varepsilon}(t) - P_{X^N}u(t,\cdot) \|_{2,N} > \frac{C_1(\varepsilon)}{\sqrt{N}} \sum_{\ell=0}^k e^{4LM\varepsilon \ell}} \leq \frac{(k+1)}{N}(6+\tilde{C}_1),
\end{equation}
where $C_1$ and $\tilde{C}_1$ are the constants that appear in Theorem \ref{th:graph_limit}.
For $k=0$, 
Theorem \ref{th:graph_limit} implies that 
$$\mathbb{P} \left( \sup_{t \in [0,\varepsilon]} \| u^{N,\varepsilon}(t) - P_{X^N}u(t,\cdot) \|_{2,N} \geq \frac{C_1(\varepsilon) }{\sqrt{N}} \right) \leq \frac{\tilde{C}_1}{N}\leq \frac{\tilde{C}_1+6}{N}.$$
Suppose that \eqref{prop_rec} holds for some $k \in \{0, \dots, n-2\} $. 
We begin by taking $t$ in the half-open interval $[(k+1)\varepsilon, (k+2) \varepsilon)$. Denoting $U_N(t):=\left\|u^{N,\varepsilon}(t) - \tilde{u}^{N,\varepsilon,k+1}(t) \right\|_{2,N}$, it holds
\begin{equation}\label{eq:blink1}
\begin{split}  
& \mathbb{P}\croch{\| u^{N,\varepsilon}(t) - P_{X^N}u(t,\cdot) \|_{2,N} \geq \frac{C_1(\varepsilon) }{\sqrt{N}} \sum_{\ell=0}^{k+1} e^{4LM\varepsilon \ell}} \\
\leq & \;
\mathbb{P}\croch{U_N(t) +\|  \tilde{u}^{N,\varepsilon,k+1}(t)- P_{X^N}u(t,\cdot) \|_{2,N} \geq \frac{C_1(\varepsilon) }{\sqrt{N}} \sum_{\ell=0}^{k+1} e^{4LM\varepsilon \ell}}  \\
\leq & \;
\mathbb{P}\croch{\{ U_N(t)\geq \frac{C_1(\varepsilon) }{\sqrt{N}}\sum_{\ell=1}^{k+1} e^{4LM\varepsilon \ell} \} \cup  \{\|  \tilde{u}^{N,\varepsilon,k+1}(t)- P_{X^N}u(t,\cdot) \|_{2,N} \geq \frac{C_1(\varepsilon) }{\sqrt{N}} \}} \\
\leq & \;
\mathbb{P}\croch{\{ U_N(t)\geq \frac{C_1(\varepsilon) }{\sqrt{N}}\sum_{\ell=1}^{k+1} e^{4LM\varepsilon \ell} \} } + \PP\croch{  \{\|  \tilde{u}^{N,\varepsilon,k+1}(t)- P_{X^N}u(t,\cdot) \|_{2,N} \geq \frac{C_1(\varepsilon)}{\sqrt{N}} \}}.
\end{split}
\end{equation}
We begin by examining the first term:
\[
\begin{split}  
& \PP\croch{ U_N(t) \geq\frac{C_1(\varepsilon)}{\sqrt{N}}\sum_{\ell=1}^{k+1} e^{4LM\varepsilon \ell}} \\
= & \PP\croch{ \{U_N(t)\geq \frac{C_1(\varepsilon)}{\sqrt{N}}\sum_{\ell=1}^{k+1} e^{4LM\varepsilon \ell}\} \cap \{ U_N(t)\leq U_N((k+1)\varepsilon)e^{4LM\varepsilon}\}} \\
& + \PP\croch{ \{U_N(t) \geq\frac{C_1(\varepsilon)}{\sqrt{N}}\sum_{\ell=1}^{k+1} e^{4LM\varepsilon \ell}\} \cap \{ U_N(t)> U_N((k+1)\varepsilon)e^{4LM\varepsilon}\}}.
\end{split}
\] 
From Lemma \ref{gronwall}, since for all $t\in[(k+1)\varepsilon, (k+2) \varepsilon)$,  $\PP\croch{ U_N(t)\leq U_N((k+1)\varepsilon)e^{4LM\varepsilon}} \geq 1-\frac{6}{N}$,
\[
\begin{split}  
 &\PP\croch{ \{U_N(t) \geq\frac{C_1(\varepsilon)}{\sqrt{N}}\sum_{\ell=1}^{k+1} e^{4LM\varepsilon \ell}\} \cap \{ U_N(t)\leq U_N((k+1)\varepsilon)e^{4LM\varepsilon}\}} \\
& \leq  \PP\croch{ \{U_N((k+1)\varepsilon) \geq \frac{C_1(\varepsilon)}{\sqrt{N}}\sum_{\ell=1}^{k+1} e^{4LM\varepsilon (\ell-1)}\} \cap \{ U_N(t)\leq U_N((k+1)\varepsilon)e^{4LM\varepsilon}\}}\\
& \leq \PP\croch{ U_N((k+1)\varepsilon) \geq \frac{C_1(\varepsilon)}{\sqrt{N}}\sum_{\ell=0}^{k} e^{4LM\varepsilon \ell}\} } \leq \frac{(k+1)}{N}(6+\tilde{C}_1),
\end{split}
\]
where the last equality comes from the induction hypothesis, noticing that by definition of $\tilde{u}^{N,\varepsilon,k+1}$,
\[ U_N((k+1)\varepsilon) = \left\|u^{N,\varepsilon}((k+1)\varepsilon) - \tilde{u}^{N,\varepsilon,k+1}((k+1)\varepsilon) \right\|_{2,N} = \left\|u^{N,\varepsilon}((k+1)\varepsilon) - P_{X^N}u(\cdot,(k+1)\varepsilon) \right\|_{2,N}.\]
Moreover, still from Lemma \ref{gronwall},
\[
\begin{split}  
& \PP\croch{ \{U_N(t) \geq \frac{C_1(\varepsilon)}{\sqrt{N}}\sum_{\ell=1}^{k+1} e^{4LM\varepsilon \ell}\} \cap \{ U_N(t)> U_N((k+1)\varepsilon)e^{4LM\varepsilon}\}} \\
\leq \;& \PP\croch{ U_N(t)> U_N((k+1)\varepsilon)e^{4LM\varepsilon}} \leq \frac{6}{N}.
\end{split}
\]
Thus, for all $t\in [(k+1)\varepsilon, (k+2) \varepsilon)$,
\[
 \PP\croch{ U_N(t) \geq\frac{C_1(\varepsilon)}{\sqrt{N}}\sum_{\ell=1}^{k+1} e^{4LM\varepsilon \ell} } \leq \frac{(k+1)}{N}(6+\tilde{C}_1) + \frac{6}{N}.
\]
Moreover, coming back to the second term of \eqref{eq:blink1}, from Theorem \ref{th:graph_limit}, for all $t\in [(k+1)\varepsilon, (k+2) \varepsilon)$,
\[
\PP\croch{\| \tilde  u^{N,\varepsilon,k}(t) - P_{X^N}u(t,\cdot) \|_{2,N}\geq  \frac{C_1(\varepsilon)}{\sqrt{N}}} \leq \frac{\tilde{C}_1}{N} 
\]
Putting these last two  convergence results together, and coming  back to \eqref{eq:blink1}, for  all $t\in [(k+1)\varepsilon, (k+2) \varepsilon)$,
\[
\mathbb{P} \croch{ \| u^{N,\varepsilon}(t) - P_{X^N}u(t,\cdot) \|_{2,N} \geq \frac{C_1(\varepsilon)}{\sqrt{N}} \sum_{\ell=0}^k e^{4LM\varepsilon \ell}} \leq \frac{(k+1)}{N}(6+\tilde{C}_1) + \frac{6}{N}+ \frac{\tilde{C}_1}{N} = \frac{(k+2)}{N}(6+\tilde{C}_1) .
\]
By continuity of $u^{N,\varepsilon}$ and $P_{X^N}u$, we can extend this result to the full interval $[(k+1)\varepsilon, (k+2)\varepsilon]$, and finally
obtain the desired result:
\[
\mathbb{P} \croch{ \sup_{t\in[(k+1)\varepsilon, (k+2)\varepsilon]}\| u^{N,\varepsilon}(t) - P_{X^N}u(t,\cdot) \|_{2,N} \geq \frac{C_1(\varepsilon)}{\sqrt{N}} \sum_{\ell=0}^{k+1} e^{4LM \varepsilon \ell} } \leq \frac{(k+2)}{N}(6+\tilde{C}_1)
\]
By induction, \eqref{prop_rec} holds for all $k\in \{0,\cdots,n-1\}$.

Moreover, recalling that $\varepsilon=\frac{T}{n}$, and that $C_1(\varepsilon) = \sqrt{\varepsilon} \sqrt{1 + M^2\| D\|_{L^\infty(\R^d)}^2 }e^{(\frac{1}{2}+4ML)\varepsilon}$,
 it holds
 $$\sum_{\ell=0}^{k} e^{4LM \varepsilon \ell} = \frac{ e^{4LM\varepsilon(k+1)}-1}{e^{4LM\varepsilon}-1} \leq \frac{e^{4LMT}-1}{4LM\varepsilon}$$
 and 
 \[
 \frac{C_1(\varepsilon)}{\sqrt{N}} \sum_{\ell=0}^k e^{4LM\varepsilon \ell} \leq \sqrt{1 + M^2\| D\|_{L^\infty(\R^d)}^2 }e^{(\frac{1}{2}+4ML)T} \frac{e^{4LMT}-1}{4LM\sqrt{N\varepsilon}} = \frac{C_3(T)}{\sqrt{N\varepsilon}}.
 \]
 
Thus, we get for all $k\in \{0,\cdots,n-1\}$ and all $t \in [k\varepsilon, (k+1)\varepsilon)$:
\[\PP\croch{  \sup_{t\in[k\varepsilon, (k+1)\varepsilon]}\left\|{u}^{N,\varepsilon}(t) - P_{X^N}u(t,\cdot) \right\|_{2,N} \geq \frac{C_3(T)}{\sqrt{N\varepsilon}}  }\leq \frac{\tilde{C}_3(T)}{N\varepsilon},\]
where $\tilde{C}_3(T) = T(\tilde{C}_1+6)$.
Since the constants are independent of $k$,  we obtain 
\[  \PP\croch{ \sup_{t \in [0,T]}  \left\|{u}^{N,\varepsilon}(t) - P_{X^N}u(t,\cdot) \right\|_{2,N} \geq \frac{C_3(T)}{\sqrt{N\varepsilon}} } \leq \frac{\tilde{C}_3(T)}{N\varepsilon}
\] which concludes the proof.
\end{proof}

{\begin{rem}
As in section \ref{Sec:Deter},  we can construct a constant-by-part bounded function $\tilde{u}_N^{\varepsilon,k}\in L^\infty(I\times\R)$ defined by 
        \[\text{for } k \in \mathbb{N},\forall  t \in [k\varepsilon,(k+1)\varepsilon], \; \forall x\in I_i^N, \qquad \tilde{u}_N^{\varepsilon,k}(x,t) = \tilde{u}_i^{N,\varepsilon,k}(t).\]
     Noticing that $\displaystyle \| u_N \|_{\mathcal{C}(0,T;L^2(I))}  =   \sup_{t \in [0,T]}  \left\|{u}^{N}(t)  \right\|_{2,N} $ and  $\displaystyle \|\tilde{u}_N^{\varepsilon,k} \|_{\mathcal{C}(k\varepsilon,(k+1)\varepsilon;L^2(I))}  =   \sup_{t \in [k\varepsilon,(k+1)\varepsilon]}  \left\|{u}^{N,\varepsilon,k}(t)  \right\|_{2,N}$, we can obtain an equivalent result for a blinking system on weighted random graphs generated by a deterministic sequence with a straightforward adaptation of the proof using this time Theorem \ref{th:deterministic}  instead of Theorem \ref{th:graph_limit}. 
\end{rem}}

We can then complete the schematic linking the various systems of interest as shown in Fig. \ref{fig:Ergodicity}. 
Thus, the graph limit equation \eqref{eq:graph_limit} is the limit of the three systems \eqref{eq:ODEs}, \eqref{eq:ODEav} and \eqref{eq:ODEbl} as $N$ goes to infinity.
 
\begin{figure}[h!]
\centering
\includegraphics[scale=1]{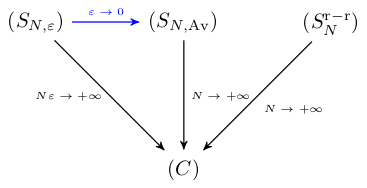}
\caption{Ergodicity}
\label{fig:Ergodicity}
\end{figure}

\section{Numerical Simulations}\label{Sec:Sim}

\subsection{Application to the Weighted Random Graph model of Garlaschelli}

The Erd\"os-R\'enyi random graph is an unweighted graph constructed by randomly linking any two nodes with a given probability $p$.
In \cite{Garlaschelli_2009}, Garlaschelli introduced a weighted version of the Erd\"os-R\'enyi random graph as follows. 
Let $p\in (0,1)$. We generate between every pair of edges $(i,j)$ an edge with an integer weight $w\in \N$, with probability $ p^w (1-p)$.
Notice that this enters our framework, defining a weighted random graph law $q$ with support in $\N$ by
\begin{equation}
q(x,y;\cdot)= (1-p) \sum_{i=0}^{+\infty} p^i \delta_i, \qquad \text{ for all } x,y\in\R.
\label{eq:example_Garaschelli}
\end{equation}
We can easily check that for all $x,y\in\R$, $q(x,y;\cdot)$ is a probability distribution, since 
\[
\int_{\R^+}q(x,y;dw) = (1-p)\sum_{i=1}^{+\infty}  p^i = (1-p)\frac{1}{1-p} = 1.
\]
Furthermore, its first moment (by definition constant in $x,y$) is given by
\[
\bw(x,y)=\int_{\R^+}w q(x,y;dw) = (1-p)\sum_{i=1}^{+\infty} i p^i 
= \frac{p}{1-p},
\]
and all moments of higher order are bounded as well. In particular, 
\[
\int_{\R^+}(w-\bw(x,y))^2 q(x,y;dw) = \int_{\R^+}w^2 q(x,y;dw)- \left(\int_{\R^+}w q(x,y;dw) \right)^2 
= \frac{p}{(1-p)^2}.
\]
This model implies that all edges are statistically equivalent, as $q(x,y;\cdot)$ does not depend on $x,y$. The probability that no edge is drawn between a given pair of vertices  given by $(1-p) \sum_{i=0}^{+\infty} p^i \delta_i(0) = (1-p)$, and consequently, the probability that an edge has a non-zero weight is given by $p$, as in the Erd\"os-R\'enyi random graph. 

In a refined version of the model, the parameter $p$ is now allowed to depend on the nodes' indices $x$ and $y$. For instance, setting $p(x,y)=\frac{xy}{2}$, the weighted graph law becomes for all $(x,y)\in I^2$
\begin{equation}
q(x,y;dw)= (1-\frac{xy}{2}) \sum_{i=0}^{+\infty} (\frac{xy}{2})^w \delta_i(w),
\label{eq:example_Garaschelli_xy}
\end{equation}
and its expected value and variance become 
\[
\bar{w}(x,y) =  \frac{xy}{2-xy}; \qquad \int_{\R^+} (w-\bw(x,y))^2 q(x,y;dw) = \frac{2xy}{(2-xy)^2}.
\]
Fig.~\ref{Fig:Garlaschelli_xy_matrices_Rand} and \ref{Fig:Garlaschelli_xy_matrices_Det} show the random matrices $(\xi_{ij})_{i,j\in\elts}$ generated respectively by a random or a deterministic sequence, for various values of $N$.

\begin{figure}[h!]
\centering
\includegraphics[width = 0.32\textwidth, trim = 0cm 0cm 0cm 0cm, clip=true]{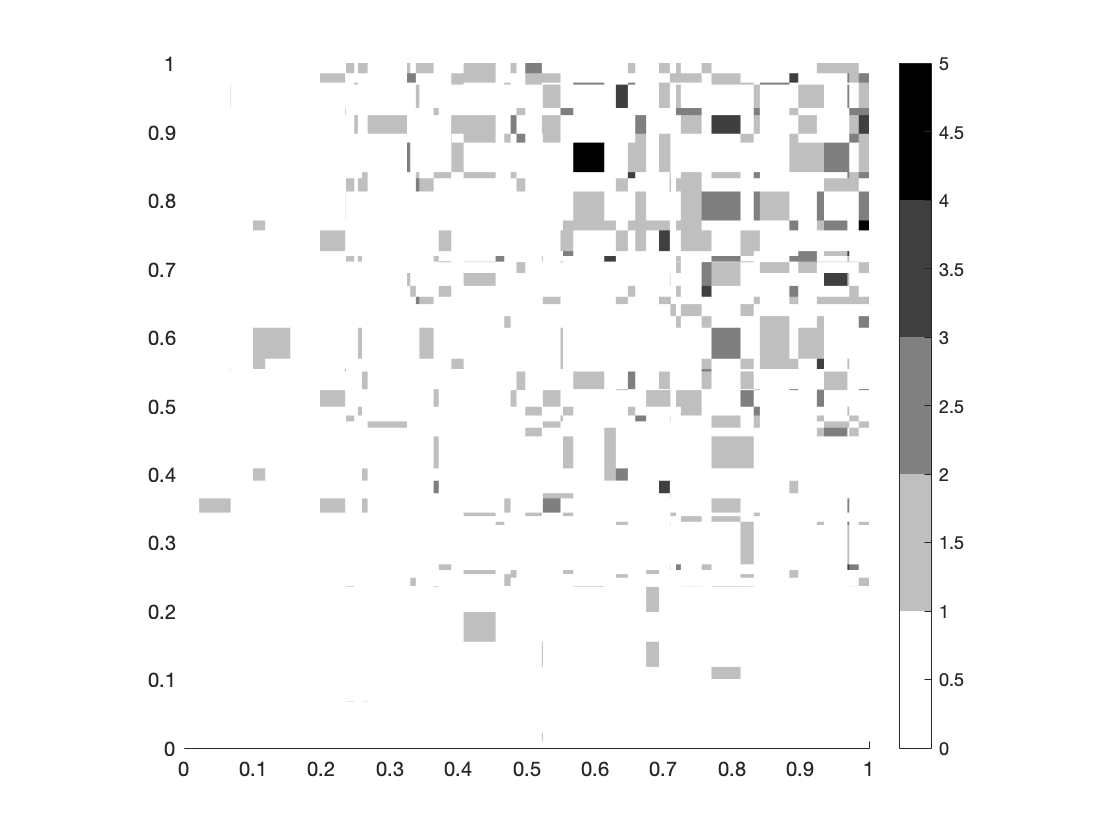}
\includegraphics[width = 0.32\textwidth, trim = 0cm 0cm 0cm 0cm, clip=true]{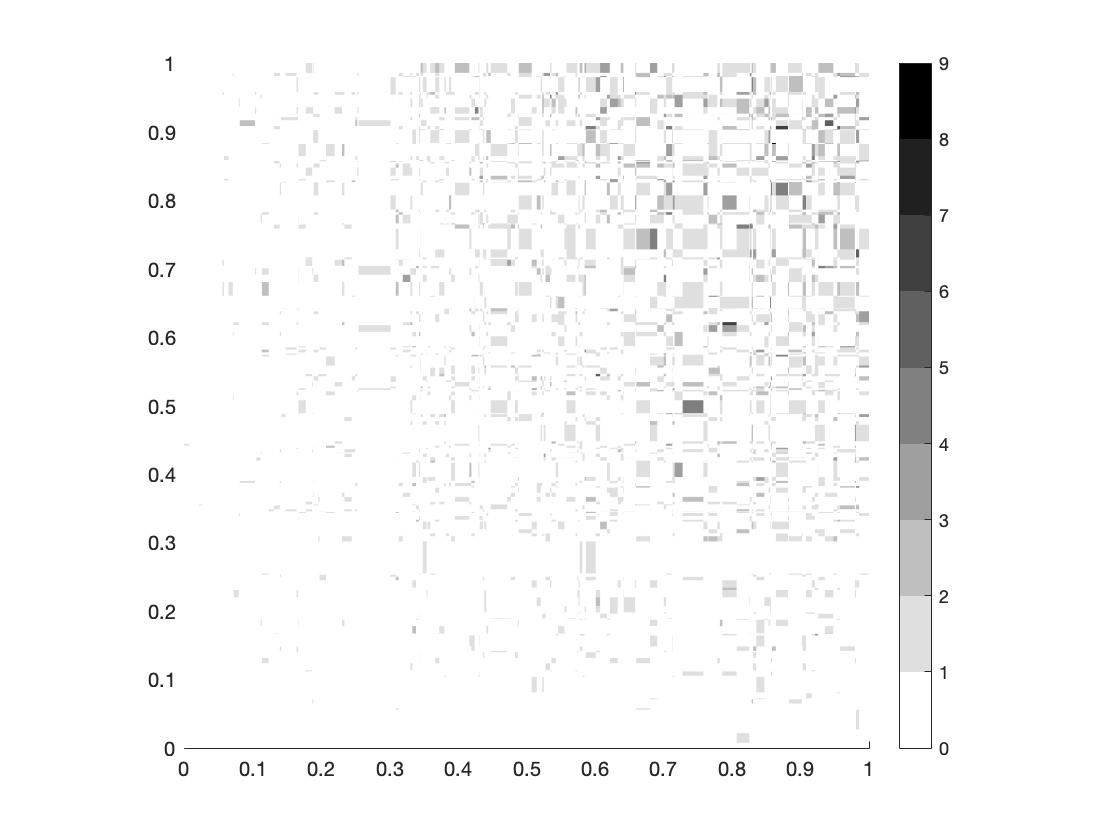}
\includegraphics[width = 0.32\textwidth, trim = 0cm 0cm 0cm 0cm, clip=true]{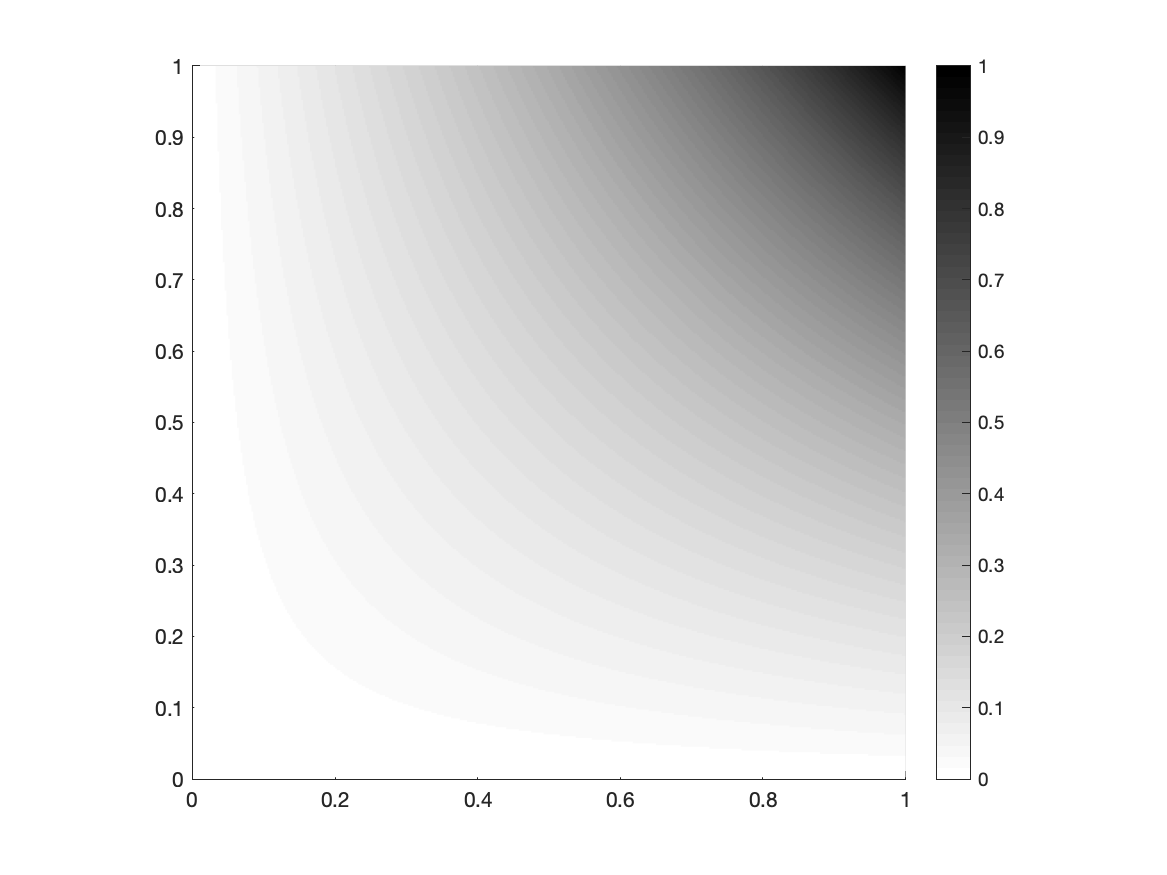}
\caption{Left and Center: Random interaction matrices generated by random sequences for $N=60$ and $N=150$, for the weighted random graph law \eqref{eq:example_Garaschelli_xy}. Right: Corresponding graphon $(x,y)\mapsto \bw(x,y)$. }
\label{Fig:Garlaschelli_xy_matrices_Rand}
\end{figure}

\begin{figure}[h!]
\centering
\includegraphics[width = 0.32\textwidth, trim = 0cm 0cm 0cm 0cm, clip=true]{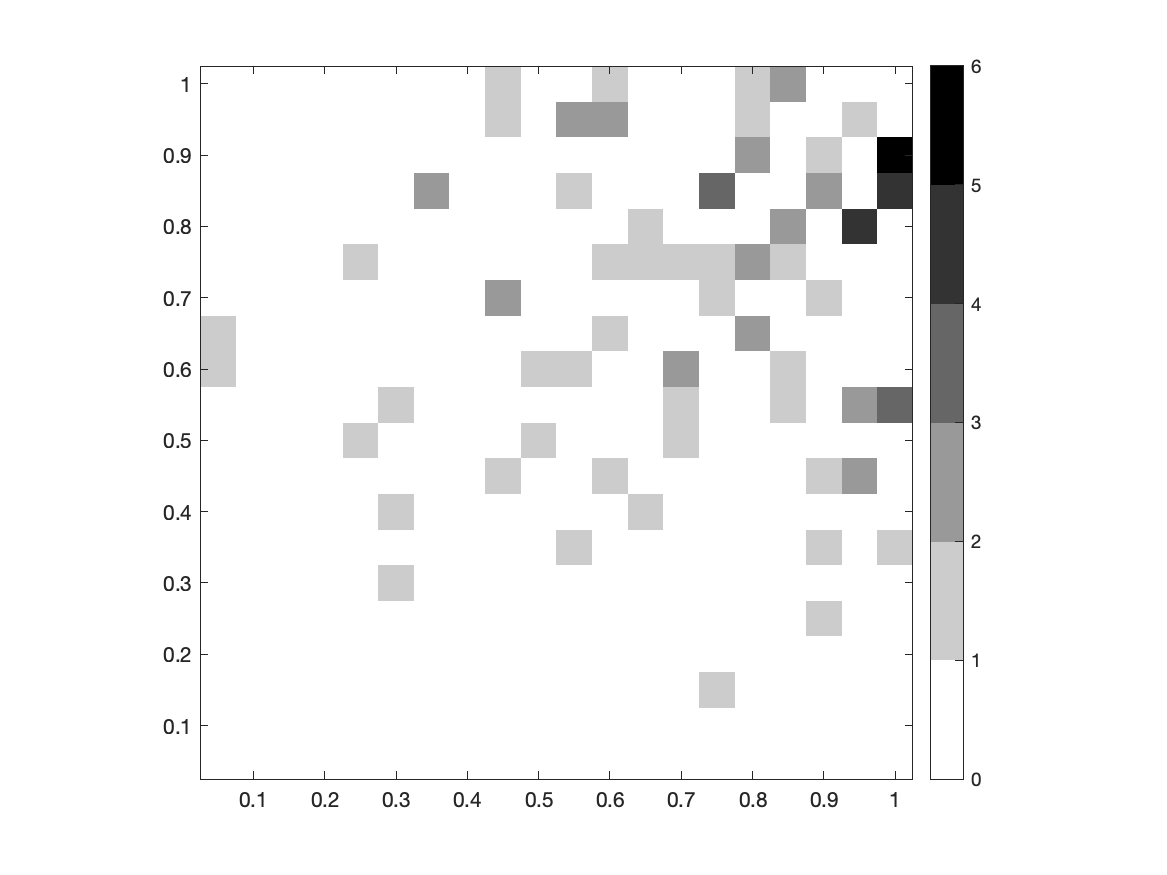}
\includegraphics[width = 0.32\textwidth, trim = 0cm 0cm 0cm 0cm, clip=true]{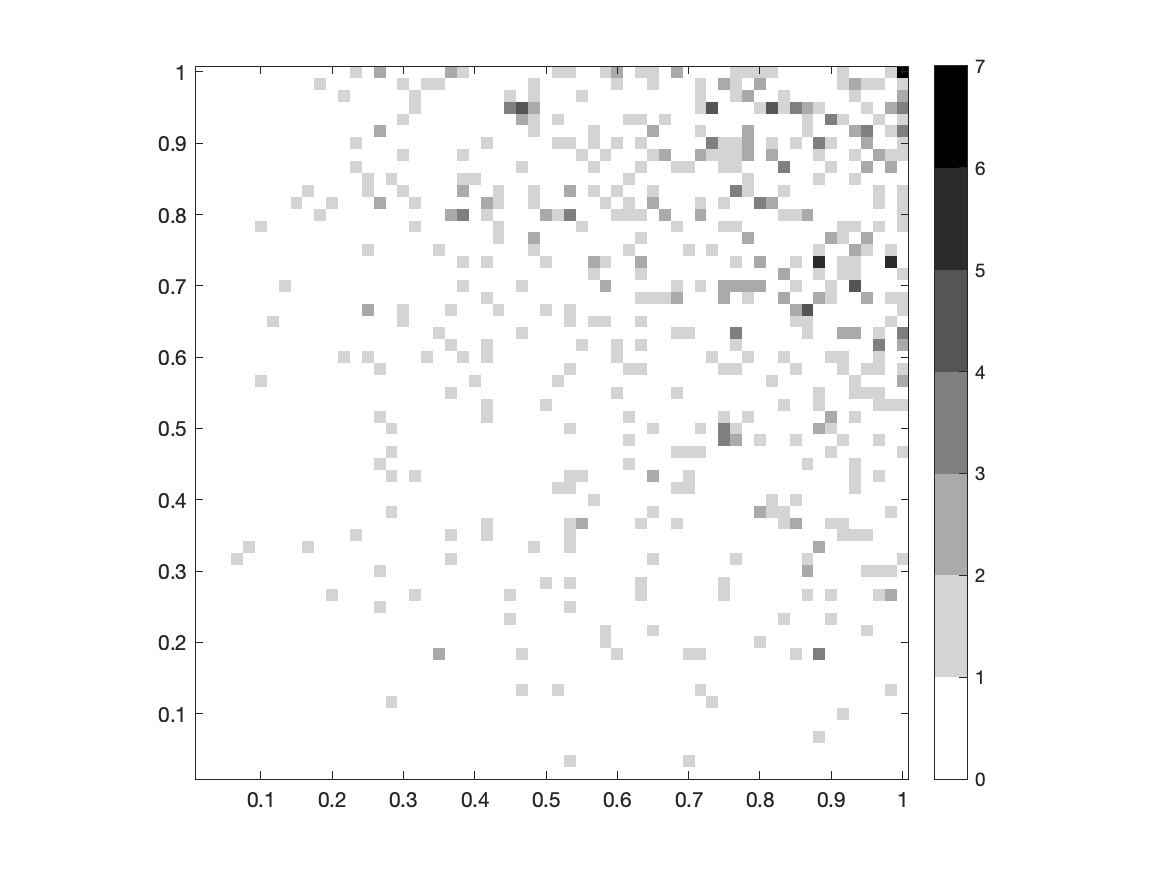}
\includegraphics[width = 0.32\textwidth, trim = 0cm 0cm 0cm 0cm, clip=true]{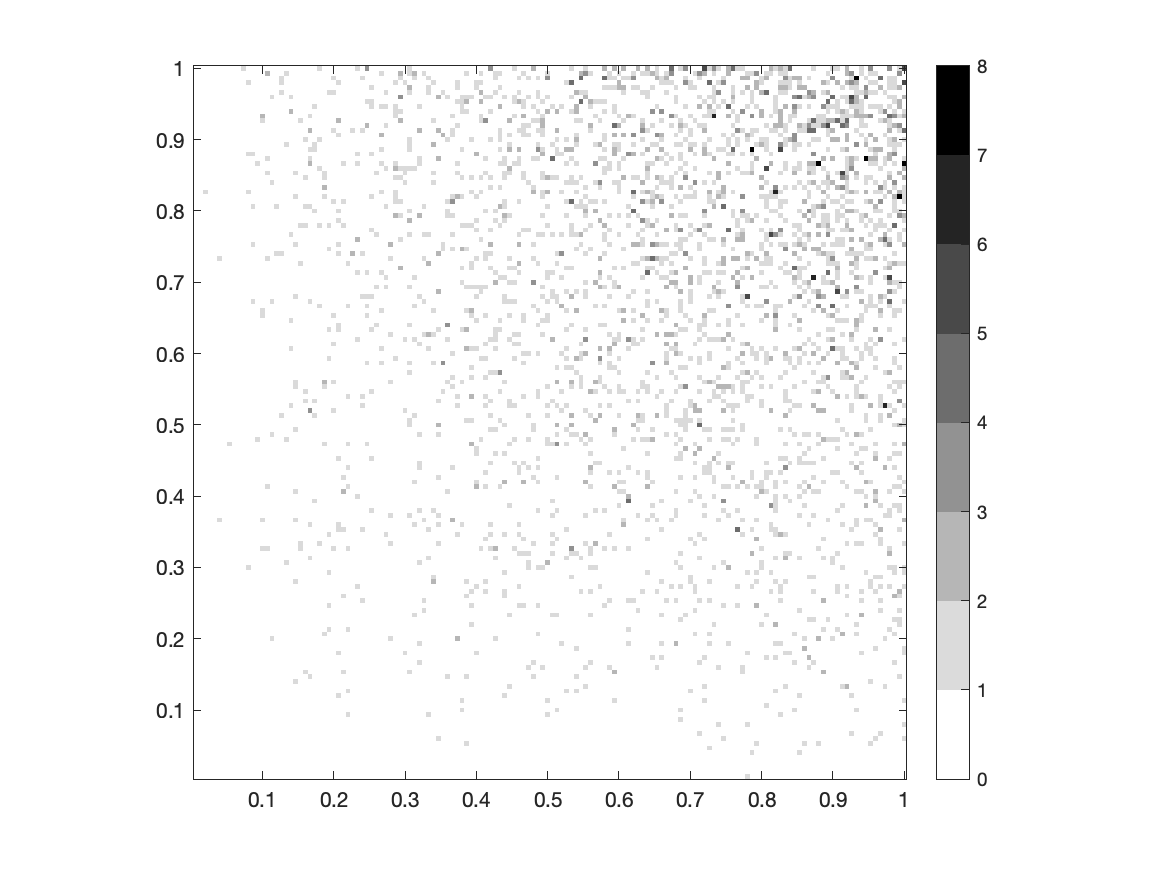}
\caption{Random interaction matrices generated by deterministic sequences for $N=20$, $N=60$ and $N=150$, for the weighted random graph law given by \eqref{eq:example_Garaschelli_xy}.}
\label{Fig:Garlaschelli_xy_matrices_Det}
\end{figure}

Applying Theorem \ref{th:graph_limit}, we expect that the microscopic system \eqref{eq:ODEs} with such random weights will converge to the  graphon $u(t,x)$ solution to the following integro-differential equation:
\begin{equation}
\begin{cases}
\partial_t u(x,t)  = \dsp \int_I \frac{xy}{2-xy} D(u(y,t)-u(x,t))dy\\
u(x,0)   =  g(x),  ~~~x \in I.
\end{cases}
\end{equation}

We illustrate this result numerically with initial data and interaction function respectively given by
\begin{equation}\label{eq:D}
D(z) = \frac{z}{1+\|z\|^2} \text{ and } g:x\mapsto \sin(4x)^2.
\end{equation}


Notice that the random matrices corresponding to the microscopic systems have a large proportion of zero-weight edges (with probability $1-\frac{xy}{2}$, as shown above), whereas the continuous graphon satisfies $\bw(x,y)>0$ as soon as $xy\neq0$.
Moreover, the edge weights $(\xi_{ij})_{i,j\in\elts}$ can theoretically take any integer value, with no upper bound, and
the higher the number of agents, the more likely it becomes to randomly generate some edge weights of high value, as seen in Fig.~\ref{Fig:Garlaschelli_xy_matrices_Rand} and \ref{Fig:Garlaschelli_xy_matrices_Det}.  
Conversely, for small values of $N$, an agent can have very few outgoing or ingoing edges, which considerably reduces its interaction with the group, and hence its convergence towards consensus.

As a result, the microscopic system and the projected solution of the graph limit equation \eqref{eq:graph_limit} have noticeably different time evolutions. Fig.~\ref{Fig:Garlaschelli_xy_N60}  shows the evolution of the agents' positions $(u_i(t))_{i\in\elts}$, for the microscopic system \eqref{eq:ODEs} generated by a random sequence $(X_i)_{i\in\elts}$  (left plots), compared to the projected graph limit solution to \eqref{eq:graph_limit} $(u(X_i,t))_{i\in\elts}$ (right plots).

Interestingly, notice that in the microscopic system, some agents' positions stay constant in time: their corresponding nodes have no ingoing edges (which is increasingly likely as $i$ is small), and they do not feel any influence from the other agents. This behavior is not observed in the projected graph limit's evolution, as in the continuous graphon $(x,y)\mapsto \bw(x,y)=\frac{xy}{2-xy}$, the only zero-weight edges are found when $X_i X_j$ is exactly equal to zero, which happens with probability zero.

\begin{figure}[h!]
\centering
\includegraphics[width = 0.45\textwidth]{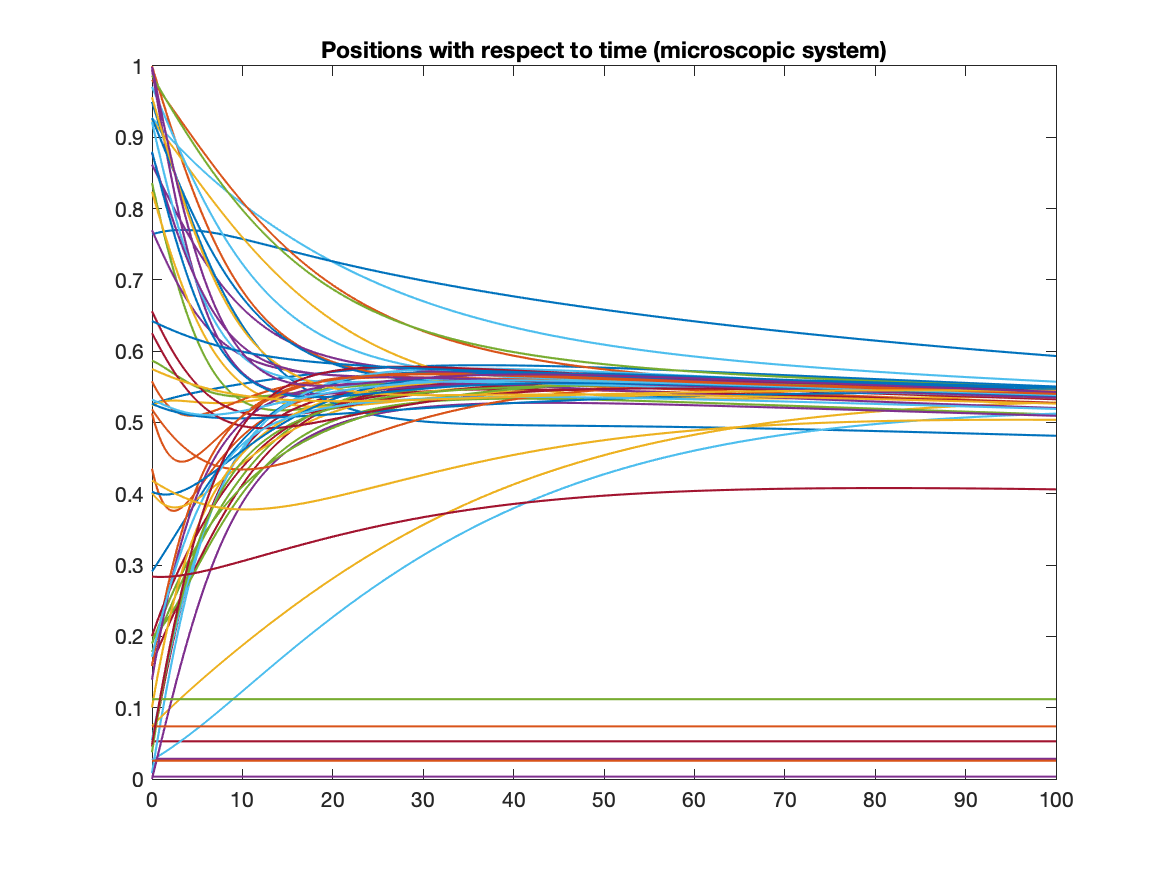}
\includegraphics[width = 0.45\textwidth]{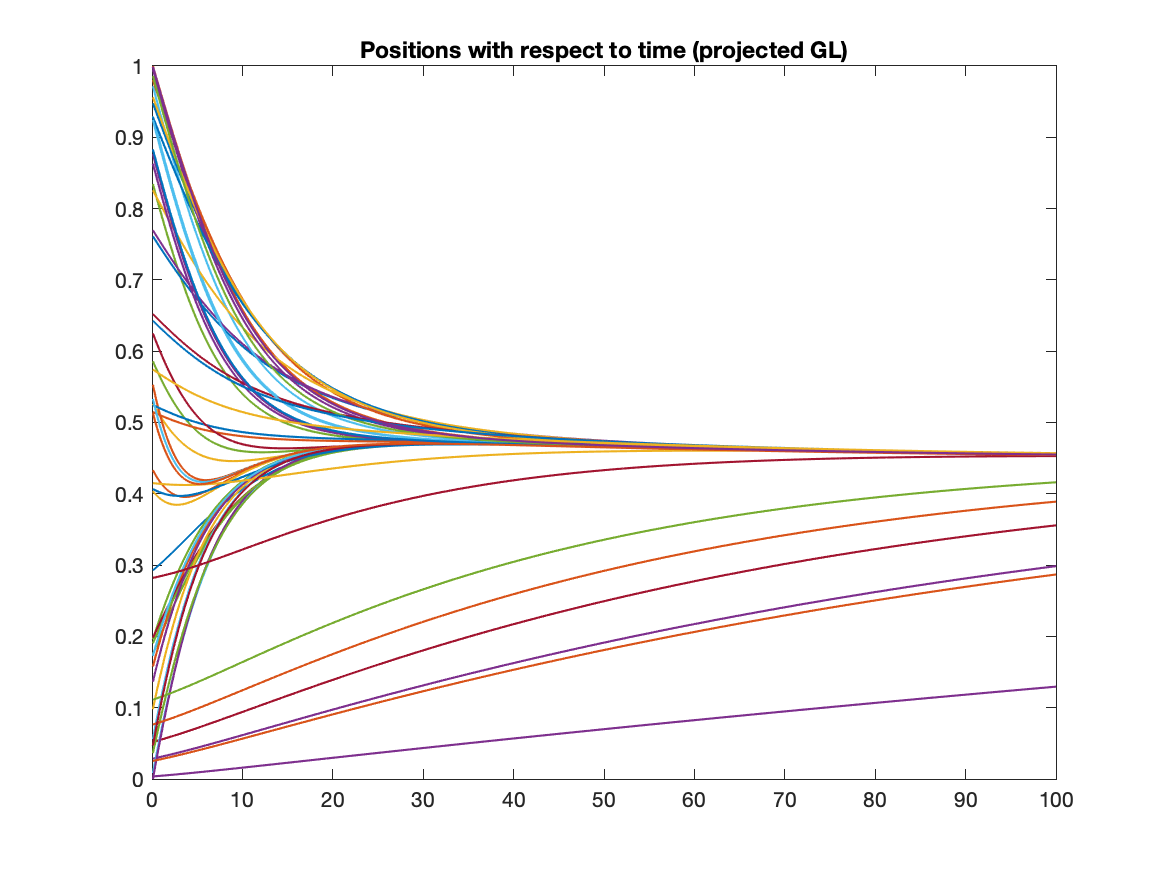}
\caption{Time evolution of the microscopic system \eqref{eq:ODEs} for $N=60$ (left), and of the corresponding projection of the graph limit \eqref{eq:graph_limit} (right), for the weighted random graph law \eqref{eq:example_Garaschelli_xy}.}
\label{Fig:Garlaschelli_xy_N60}
\end{figure}


Fig.~\ref{Fig:Garlaschelli_Det_xy_N60} 
shows the evolution of the graph limit $u(\cdot,t)$ (in red) for $t=0$, $t=6$ and $t=40$  and of  the projection $u_N(\cdot,t)$ of the microscopic system generated by a deterministic sequence \eqref{ODE22}, for $N=60$ (in black).
Observe that while $u(\cdot,t)$ converges to consensus in $L^2$, it does not in $L^\infty$, as the graphon is not strongly connected (as proven in \cite{BonnetPouradierDuteilSigalotti21}).

\begin{figure}[h!]
\centering
\includegraphics[width = 0.32\textwidth]{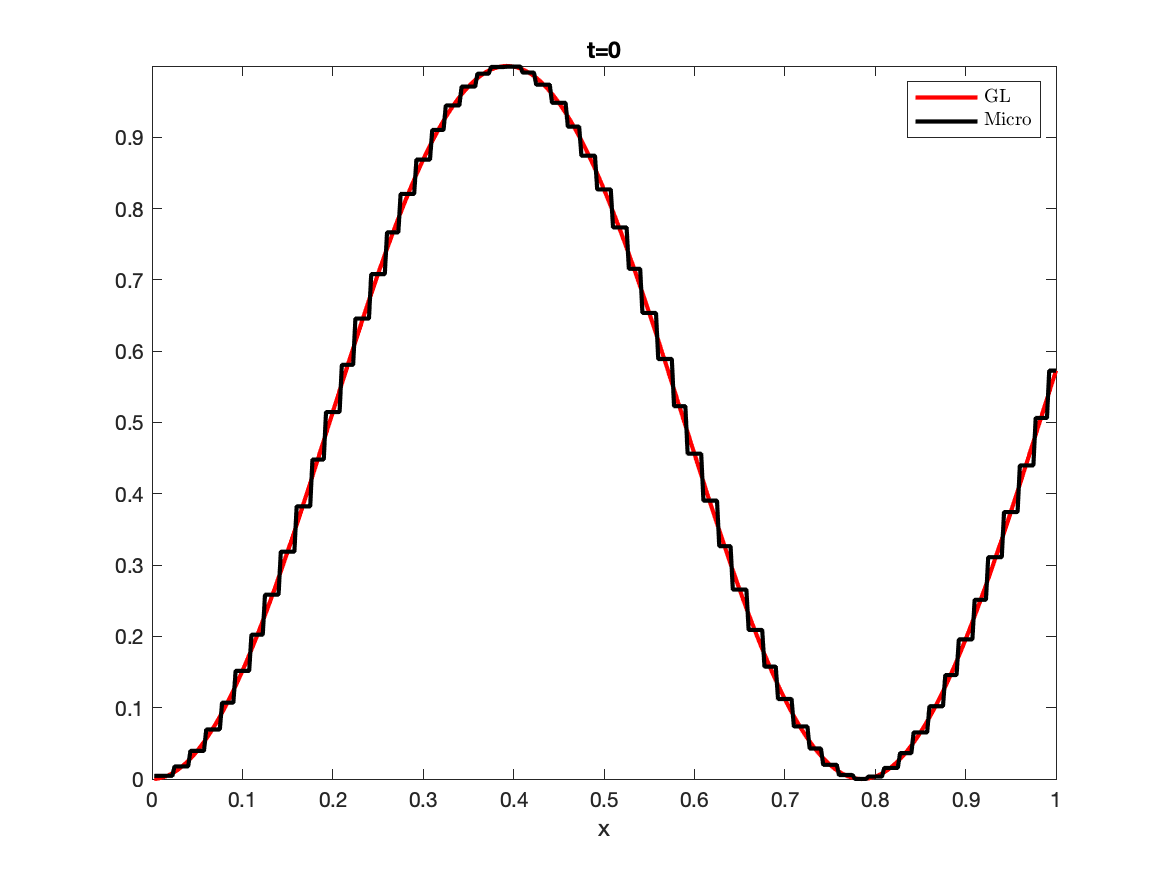}
\includegraphics[width = 0.32\textwidth]{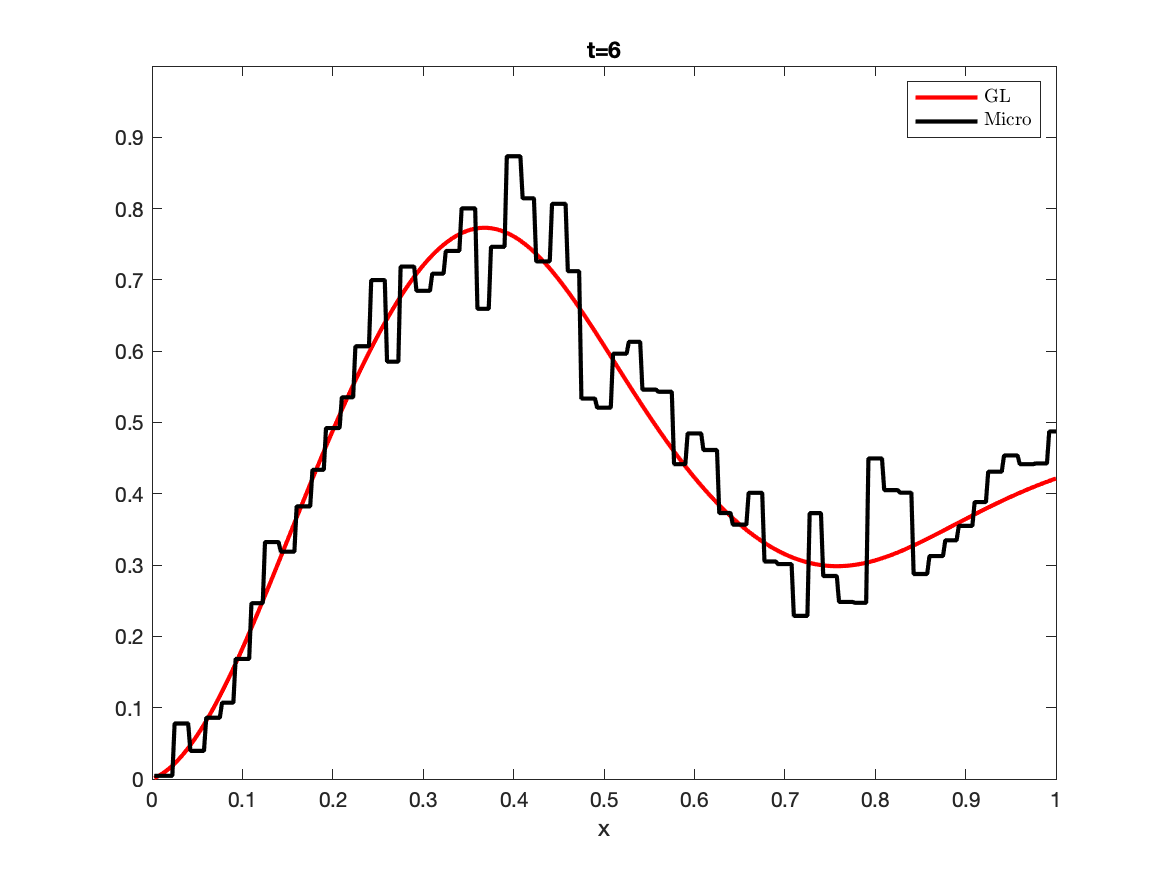}
\includegraphics[width = 0.32\textwidth]{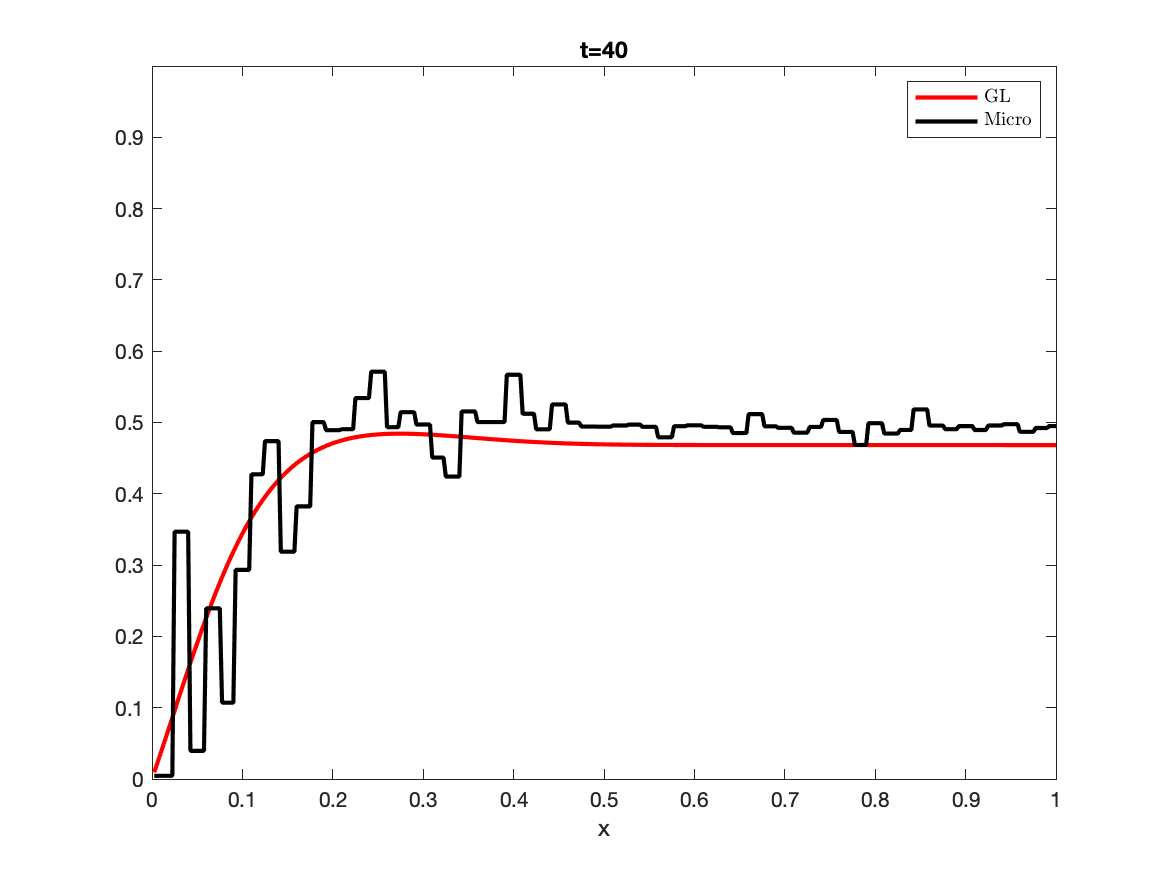}
\caption{Evolution of the graph limit $u(\cdot,t)$ solution to \eqref{eq:graph_limit} (red) and of $u_N(\cdot,t)$ constructed from the solution to \eqref{ODE22} (black) for $N=60$, at $t=0$, $t=6$ and $t=40$, for the weighted random graph law \eqref{eq:example_Garaschelli_xy}.}
\label{Fig:Garlaschelli_Det_xy_N60}
\end{figure}


The convergence of both microscopic systems to the graph limit solution is illustrated in Fig.~\ref{Fig:Garlaschelli_xy_L2norm}.

\begin{figure}[h!]
\centering
\includegraphics[width = 0.45\textwidth]{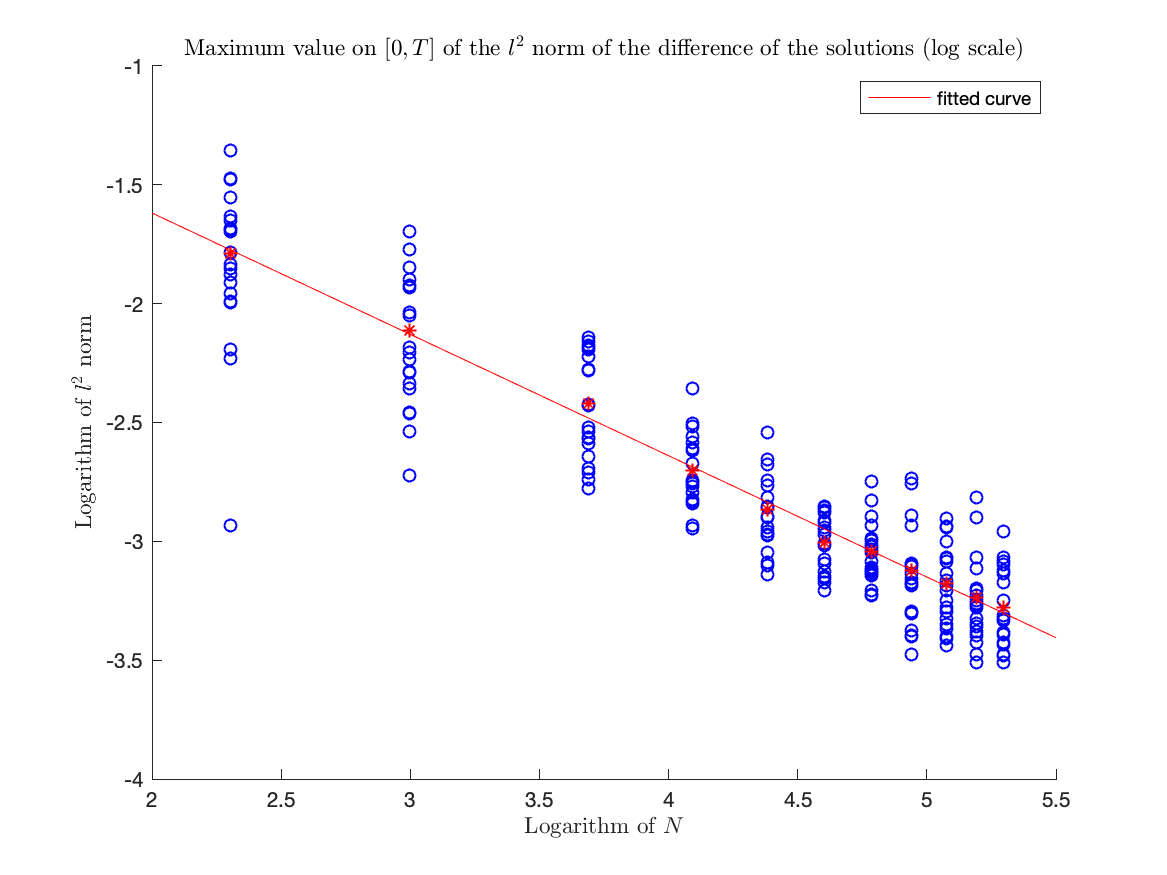}
\includegraphics[width = 0.45\textwidth]{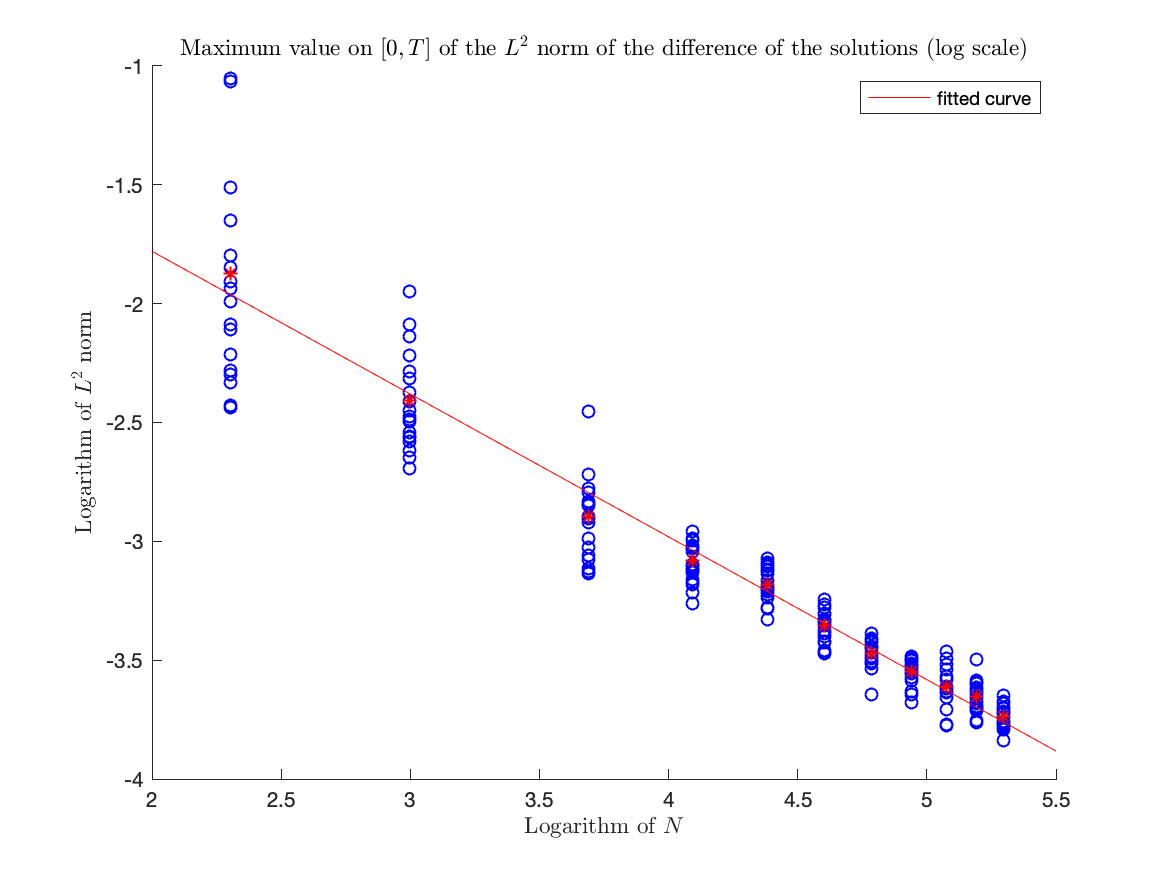}
\caption{Convergence of \eqref{eq:ODEs} quantified by $\sup_{t \in [0,T]}  \|u^N(t) - \mathbf{P}_{\tilde{X}_N} u(\cdot,t) \|_{2,N}$ (left) and of \eqref{ODE22} quantified by $\sup_{t \in [0,T]}  \|u_N(\cdot,t) - u(\cdot,t) \|_{L^2}$ (right) for different values of $N$, with 20 runs for each value of $N$ (logarithmic scale), for the weighted random graph law \eqref{eq:example_Garaschelli_xy}.}
\label{Fig:Garlaschelli_xy_L2norm}
\end{figure}

\subsection{Weighted ``small world'' network}

In \cite{WS98}, Watts and Strogatz introduced a model for a ``small-world'' network, to interpolate between regular and random networks.
The construction procedure for a finite set of $N$ nodes is as follows. Connect each node with its $k$ closest neighbors to form a ring lattice (this is the deterministic underlying structure of the network). Then, rewire each edge at random with probability $p$.
The constructed network reflects the well-known ``small-world'' property according to which each individual has a small probability to be connected with another individual supposedly outside its circle. 

We can refine this model by considering weighted edges. Given two nodes of the graph, we connect them with an edge of weight $1$ if they are among each other's closest $k$ neighbors, i.e. if $|X_i-X_j|\leq r$, where $r:=\frac{k}{2N}$. Then, with probability $p = \frac{|X_i-X_j|}{r}$, rewire each of these edges at random, giving the new edge a weight drawn uniformly in the interval $[0,1]$. The weighted random graph law 
 giving the edge weight distribution for each $(i,j)$ is given by 

\begin{equation}
q(x,y;dw) = 
\begin{cases}
\frac{\rho(x,y)}{r} d\lambda_{[0,1]} + (1-\frac{\rho(x,y)}{r}) \delta_1 \qquad \text{ if } \rho(x-y)\leq r\\
d\lambda_{[0,1]} \qquad \text{ otherwise}
\end{cases}
\label{eq:Example_SmallWorld}
\end{equation}
where $d\lambda_{[0,1]}$ represents the Lebesgue measure restricted to the interval $[0,1]$
and $\rho(x,y) = \min\{|x-y|,|x-y-1|,|y-x-1|\}$.

Note that for all $x,y\in [0,1]$, the probability measure $q(x,y;\cdot)$ is supported in $[0,1]$, hence all its moments are finite. 
Its first moment satisfies
\[
\bw(x,y)=\int_{\R^+} w q(x,y;dw) = 
\begin{cases}
 (1-\frac{\rho(x,y)}{2r}) \qquad \text{ if } \rho(x,y)\leq r\\
\frac{1}{2} \qquad \text{ otherwise.}
\end{cases}
\]

Fig.~\ref{Fig:SmallWorld_matrices} depicts examples of random interaction matrices $(\xi_{ij})_{i,j\in\elts}$ generated by a random sequence (left) or by a deterministic sequence (center), for $r=0.3$. 
Unlike in the previous example, here, the limiting graphon $(x,y)\mapsto \bw(x,y)$ is bounded away from zero.

\begin{figure}[h!]
\centering
\includegraphics[width = 0.32\textwidth, trim = 0cm 0cm 0cm 0cm, clip=true]{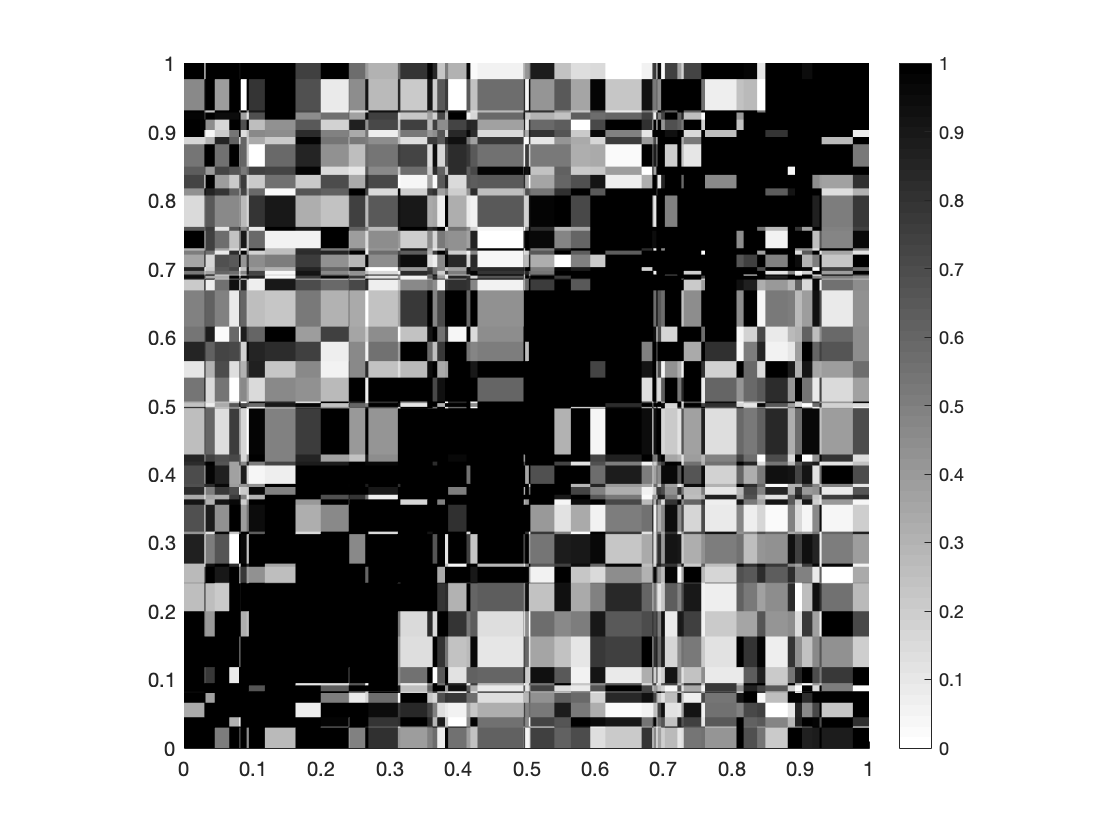}
\includegraphics[width = 0.32\textwidth, trim = 0cm 0cm 0cm 0cm, clip=true]{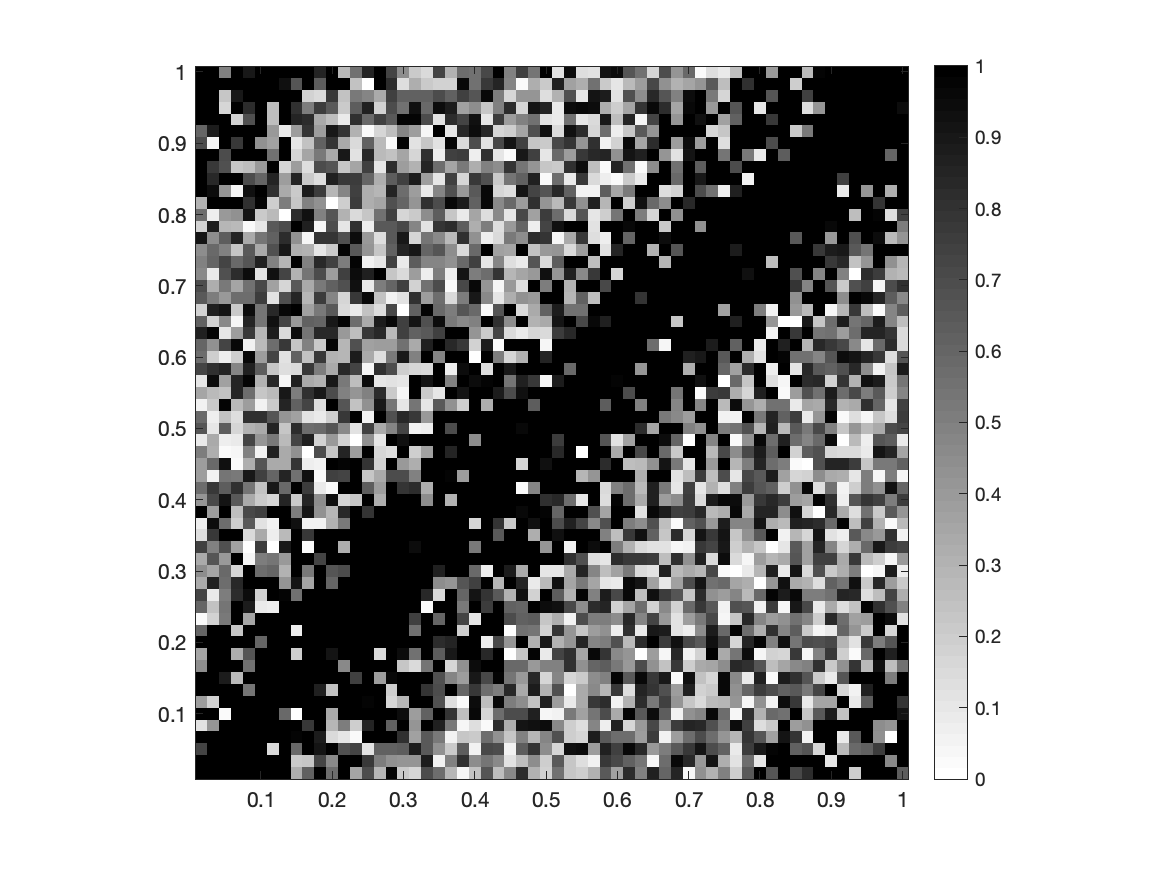}
\includegraphics[width = 0.32\textwidth, trim = 0cm 0cm 0cm 0cm, clip=true]{Graphon_xy_4.png}
\caption{Values of the random interaction matrices generated from a random sequence (left) and a deterministic sequence (right) according to the weighted random graph law \eqref{eq:Example_SmallWorld} for $N=60$. Right: Corresponding continuous graphon $(x,y)\mapsto \bw(x,y)$. }
\label{Fig:SmallWorld_matrices}
\end{figure}

Also unlike the previous example, the probability that an edge's weight is exactly $0$ is zero, which means that the graph is fully connected, even for finite values of $N$, as seen in Fig.~\ref{Fig:SmallWorld_matrices}.  
Fig.~\ref{Fig:SmallWorld_N60} shows the evolution of the microscopic system \eqref{eq:ODEs} generated by a random sequence (left) and that of the corresponding projected graph limit solution $(u(X_i,\cdot))_{i\in\elts}$, for $N=60$. The interaction function and the initial data are given by \eqref{eq:D}. Convergence to consensus seems to happen at similar rates in both cases. 

\begin{figure}[h!]
\centering
\includegraphics[width = 0.45\textwidth]{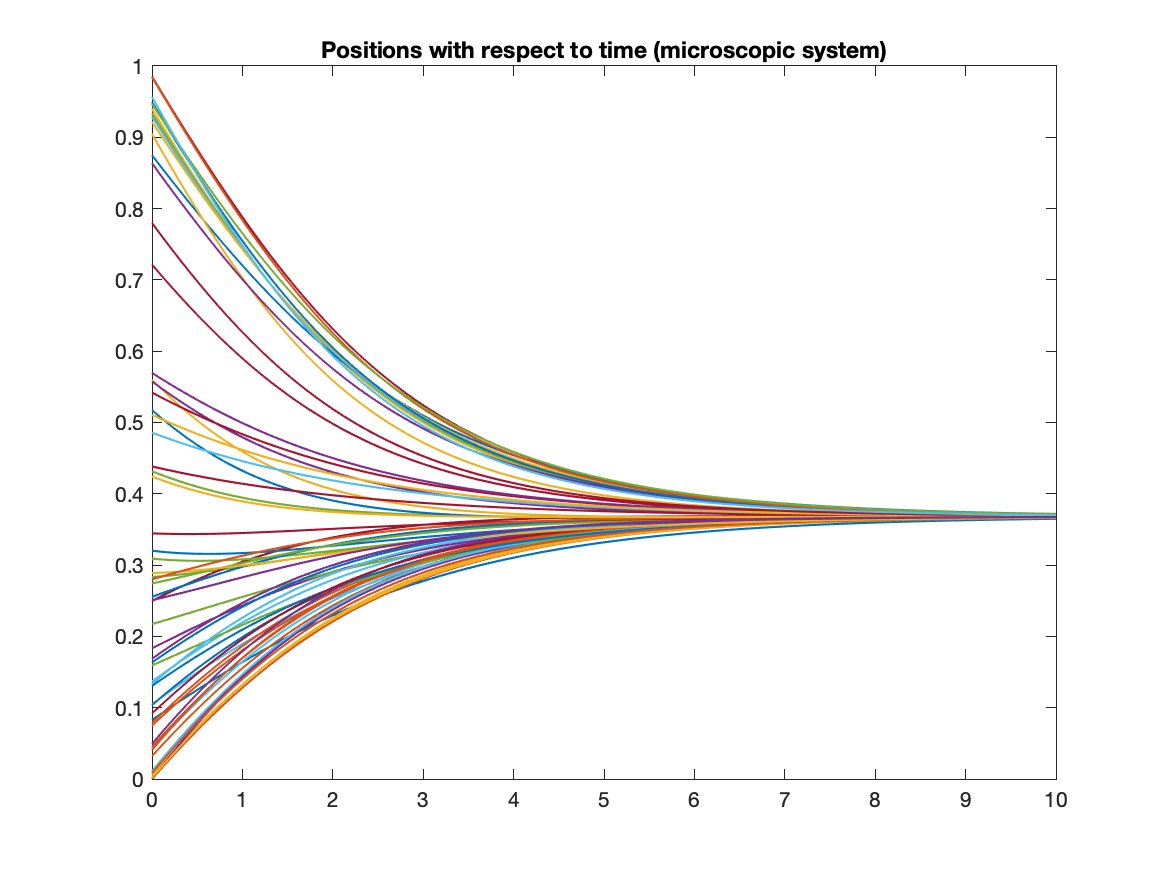}
\includegraphics[width = 0.45\textwidth]{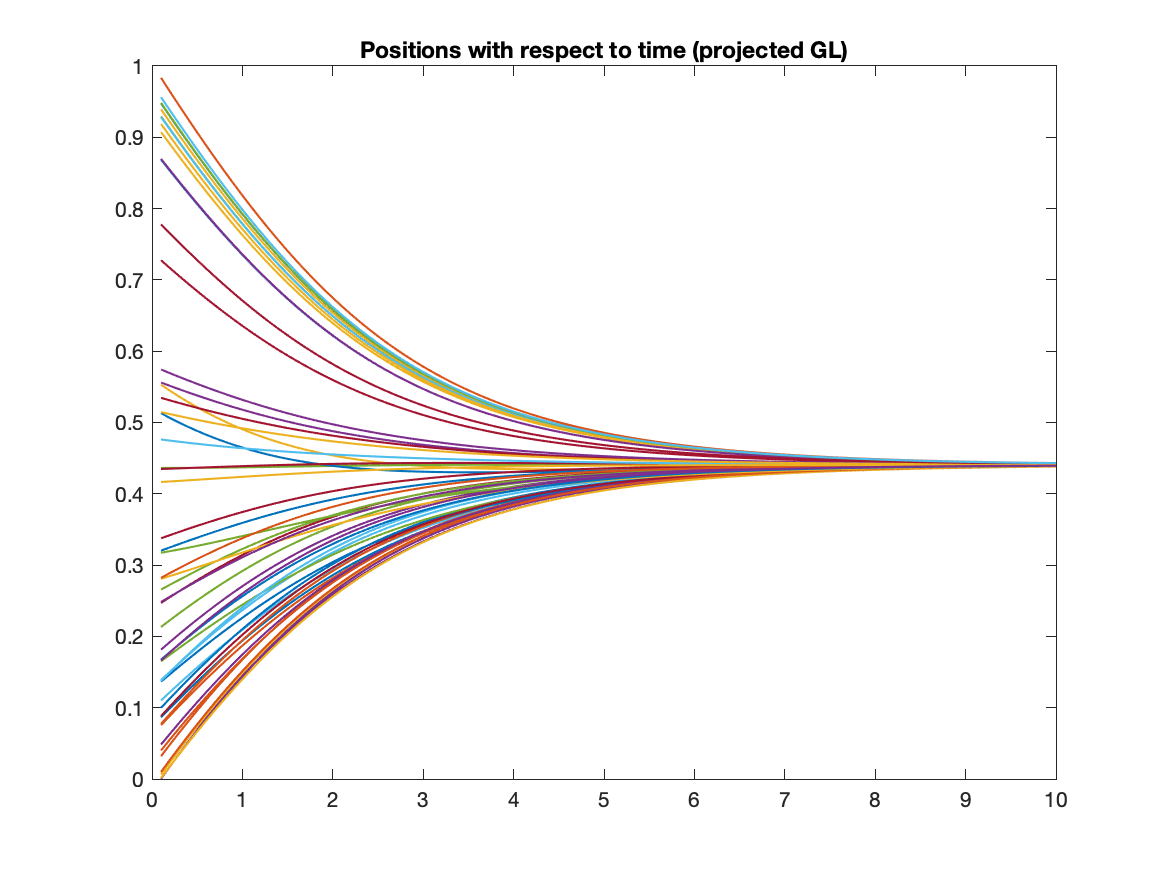}
\caption{Time evolution of the microscopic system \eqref{eq:ODEs} for $N=60$ (left), and of the corresponding projection of the graph limit \eqref{eq:graph_limit} (right), for the random weighed graph law \eqref{eq:Example_SmallWorld}.}
\label{Fig:SmallWorld_N60}
\end{figure}

Fig.~\ref{Fig:SmallWorld_Det_N60} shows the evolution of the graph limit $t\mapsto u(\cdot,t)$ solution to \eqref{eq:graph_limit} (black) and that of the solution to the microscopic system \eqref{ODE22} generated by a deterministic sequence $t\mapsto u_N(\cdot,t)$. Since the graphon $\bw$ is strongly connected, convergence to consensus can be observed both in $L^2$ and $L^\infty$ norms. 

\begin{figure}[h!]
\centering
\includegraphics[width = 0.32\textwidth]{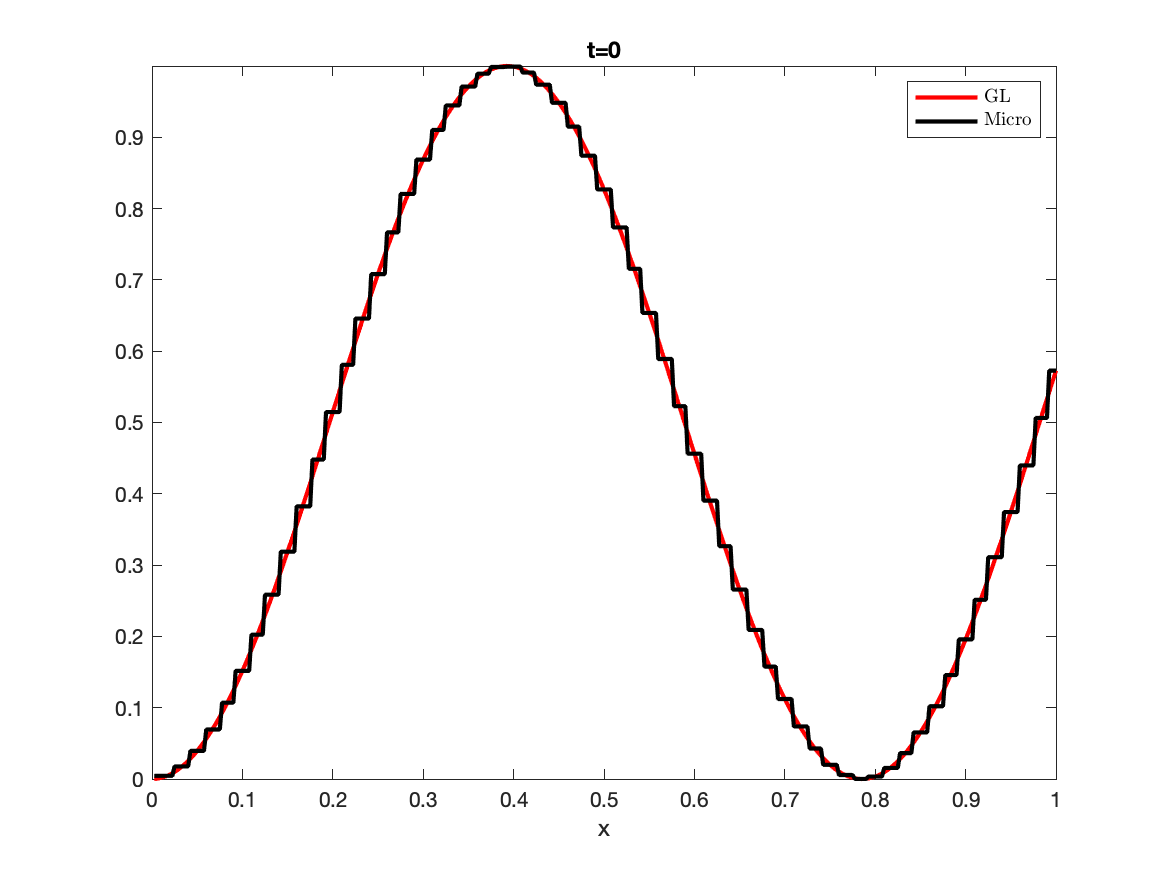}
\includegraphics[width = 0.32\textwidth]{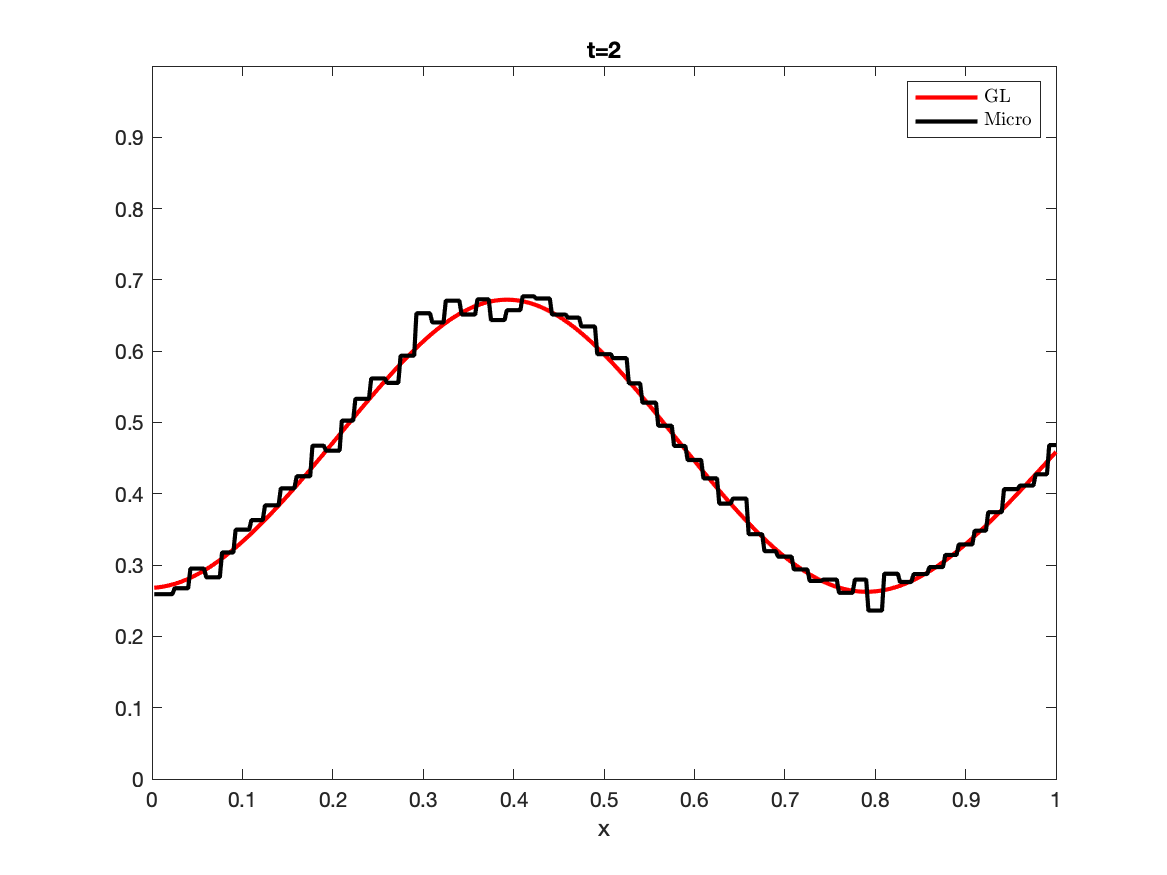}
\includegraphics[width = 0.32\textwidth]{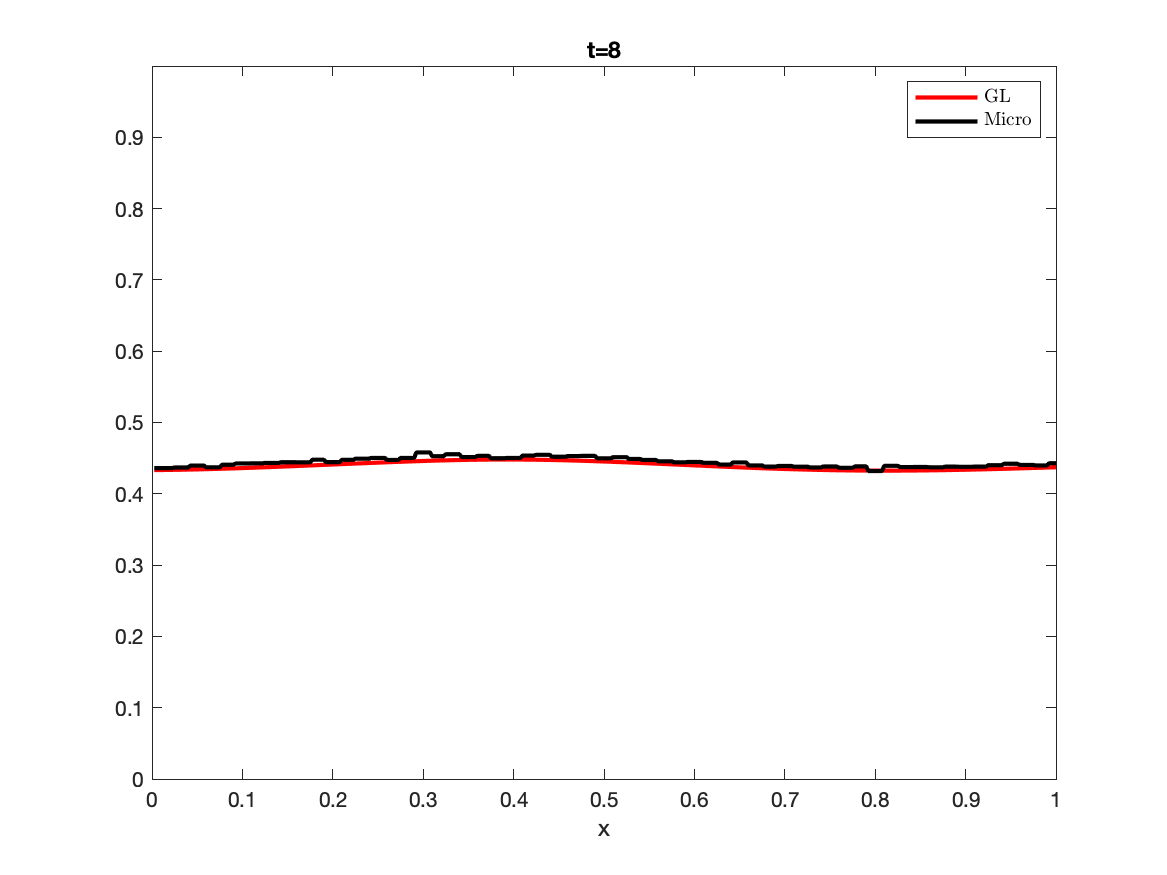}
\caption{Evolution of the graph limit $u(\cdot,t)$ solution to \eqref{eq:graph_limit} (red) and of $u_N(\cdot,t)$ constructed from the solution to \eqref{ODE22} (black) for $N=150$, at $t=0$, $t=6$ and $t=40$, for the weighted random graph law \eqref{eq:Example_SmallWorld}.}
\label{Fig:SmallWorld_Det_N60}
\end{figure}

Convergence towards the graph limit in both cases is quantified in Fig.~\ref{Fig:SmallWorld_L2norm}.

\begin{figure}[h!]
\centering
\includegraphics[width = 0.45\textwidth]{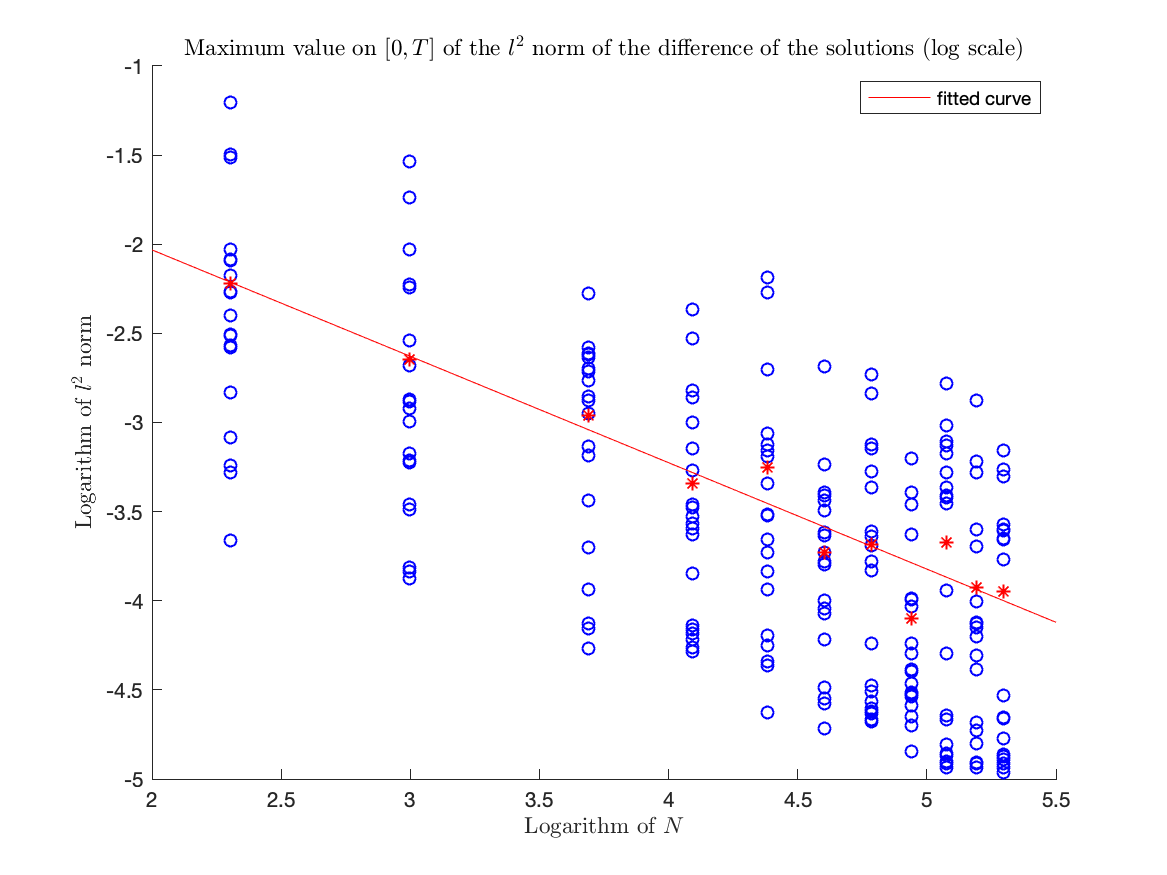}
\includegraphics[width = 0.45\textwidth]{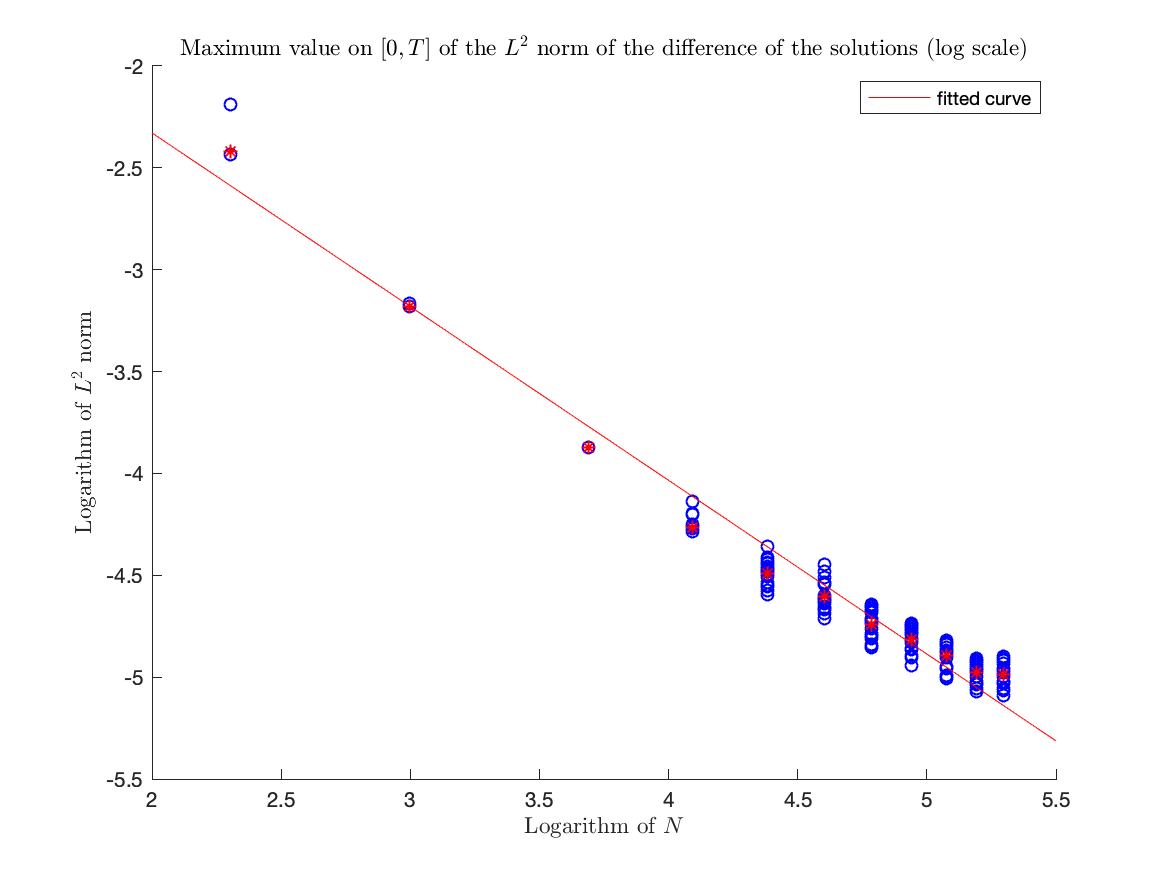}
\caption{Quantification of the convergence of the microscopic systems \eqref{eq:ODEs} and \eqref{ODE22} respectively given by $\sup_{t \in [0,T]}  \|u^N(t) - \mathbf{P}_{\tilde{X}_N} u(\cdot,t) \|_{2,N}$ (left) and $\sup_{t \in [0,T]}  \|u_N(\cdot,t) - u(\cdot,t) \|_{L^2}$ (right) for different values of $N$, with 20 runs for each value of $N$ (logarithmic scale), for the weighted random graph law \eqref{eq:Example_SmallWorld}. }
\label{Fig:SmallWorld_L2norm}
\end{figure}


\subsection{Blinking systems}

We now illustrate the results presented in Section \ref{Sec:blinking}.
Figure \eqref{Fig:Blinking_Garlaschelli_xy} shows the time evolution of the solution to the blinking system \eqref{eq:ODEbl}, with blinking period respectively $\varepsilon=1$ (left) and  $\varepsilon=0.1$ (right), and weighted random graph law $q$ given by \eqref{eq:example_Garaschelli_xy}. As previously, the interaction function and the initial data were chosen as given in equations \eqref{eq:D}.
In comparison, in Fig.~\ref{Fig:Blinking_Garlaschelli_xy_Convergence}, we show the solution to the averaged system \eqref{eq:ODEav}, as well as the projected solution to the graph limit equation \eqref{eq:graph_limit}.
Notice that the solution to the averaged system \eqref{eq:ODEav} is almost undistinguishable from that of the projected graph limit \eqref{eq:graph_limit}.
As expected, for a fixed $N$, the smaller $\varepsilon$ is, the closer the solution to the blinking system is to both the solution to the averaged system \eqref{eq:ODEav} and to the graph limit \eqref{eq:graph_limit}.
Convergence of the blinking system \eqref{eq:ODEbl} towards the graph limit \eqref{eq:graph_limit} for a fixed $\varepsilon = 0.1$ as $N$ goes to infinity is shown in Fig. \ref{Fig:Blinking_Garlaschelli_L2norm}.

\begin{figure}[h!]
\centering
\includegraphics[width = 0.45\textwidth]{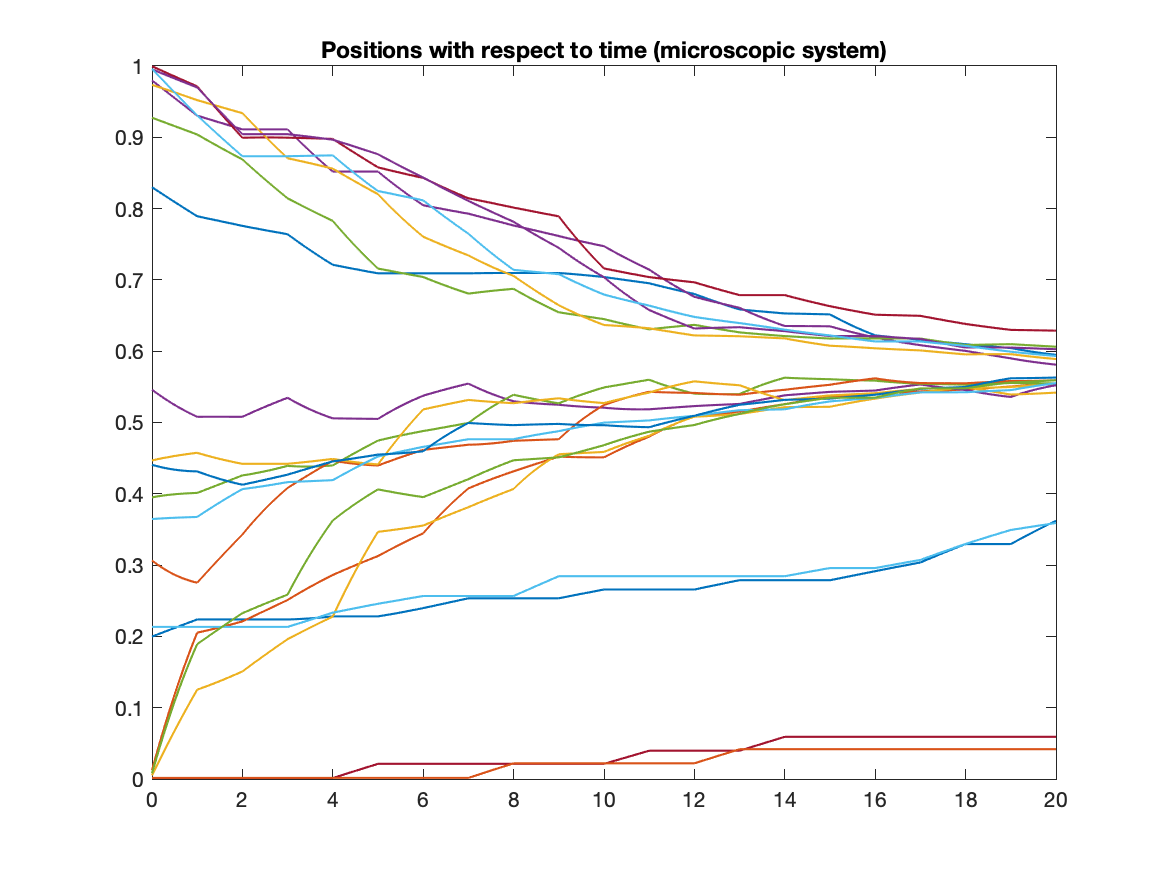}
\includegraphics[width = 0.45\textwidth]{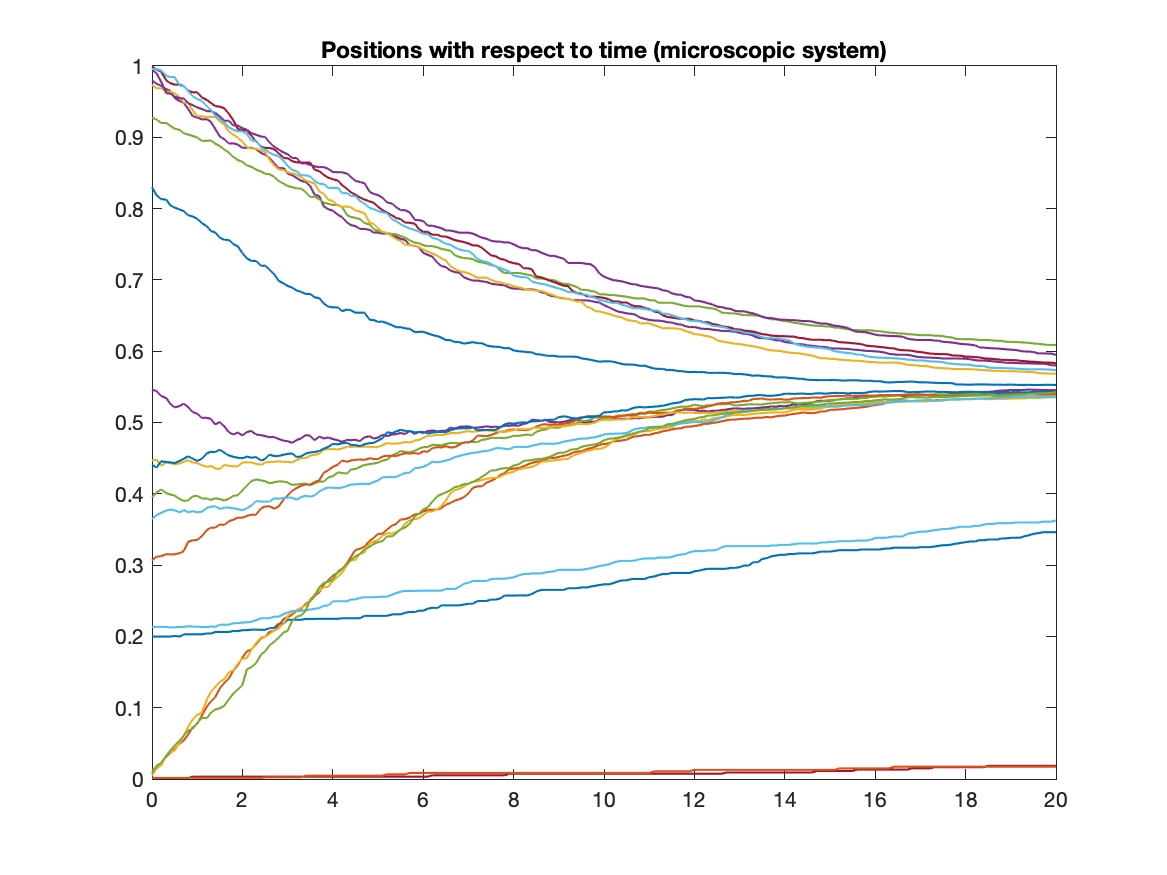}
\caption{Time evolution of blinking system \eqref{eq:ODEbl} for $N=20$, and $\varepsilon=1$ (left) and $\varepsilon=0.1$ (right) for the random weighed graph given by \eqref{eq:example_Garaschelli_xy}.}
\label{Fig:Blinking_Garlaschelli_xy}
\end{figure}

\begin{figure}[h!]
\centering
\includegraphics[width = 0.45\textwidth]{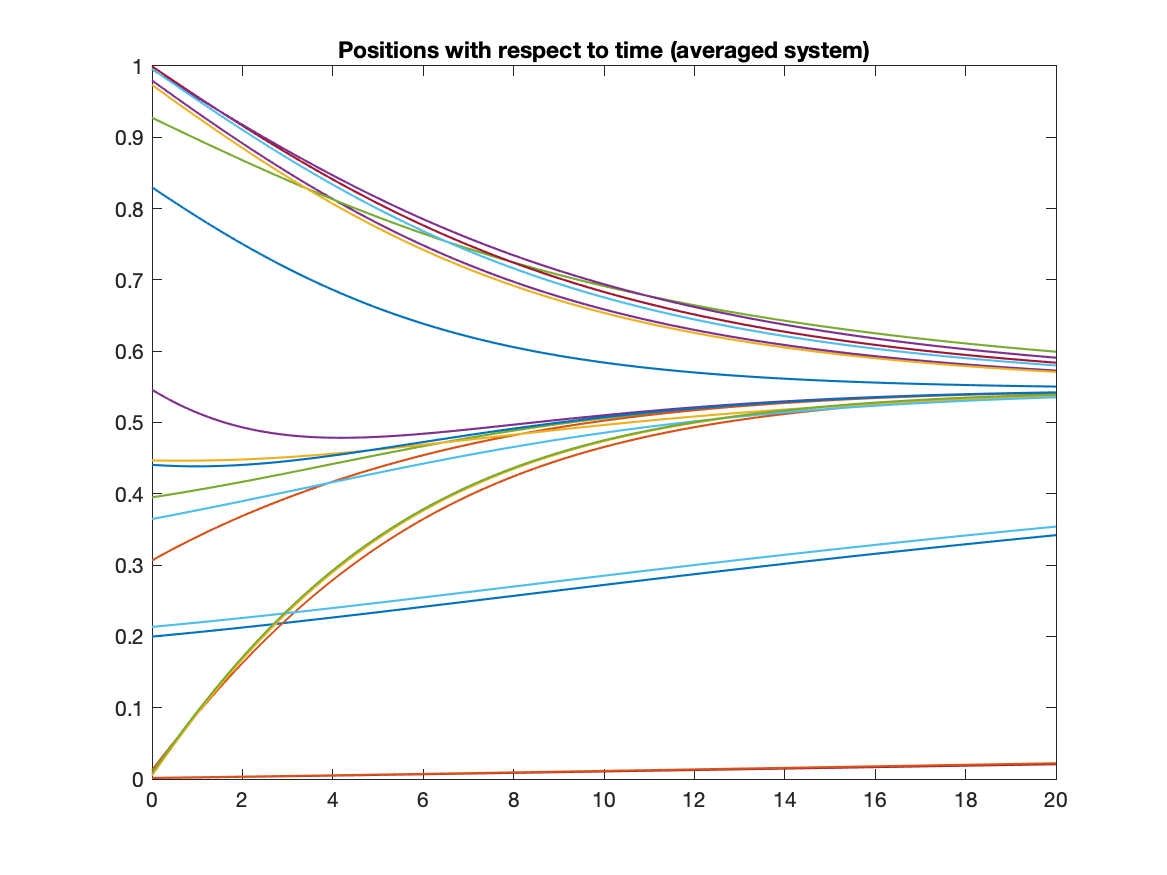}
\includegraphics[width = 0.45\textwidth]{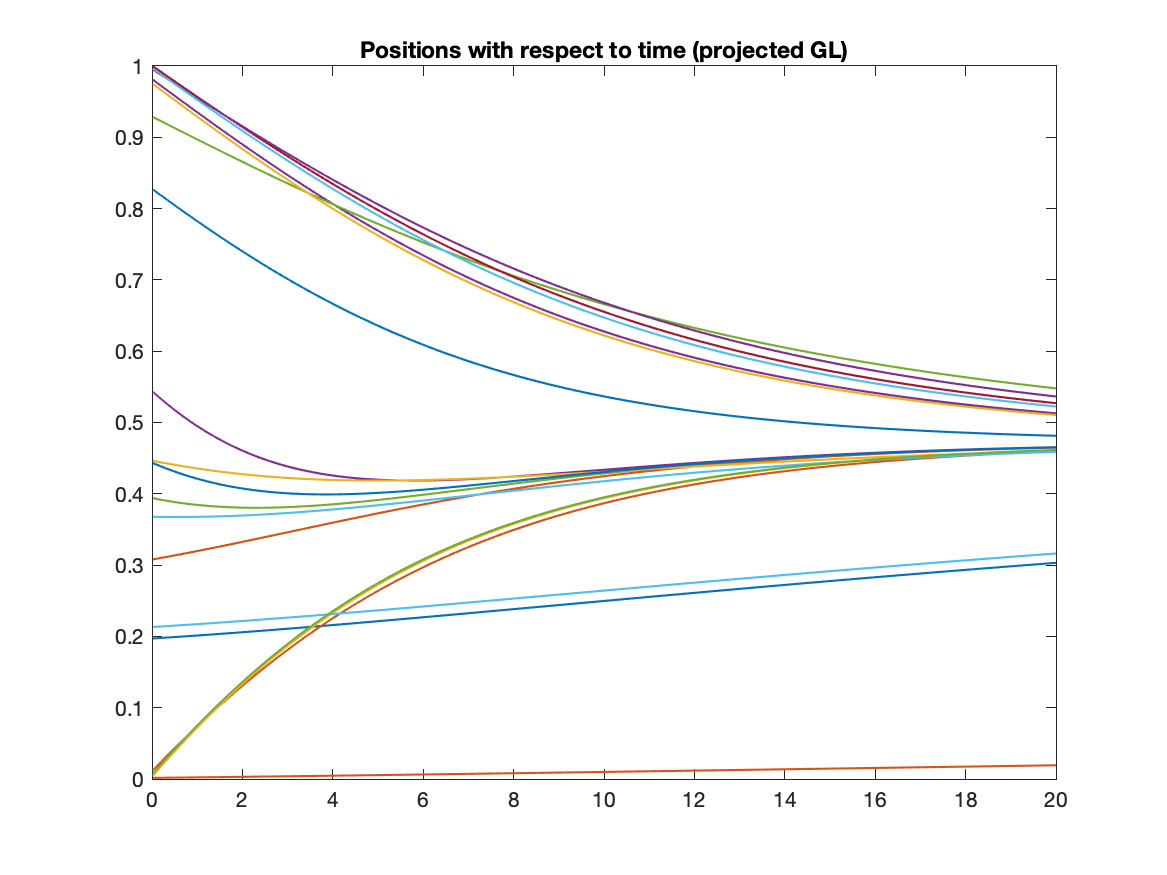}
\caption{Time evolution of averaged system \eqref{eq:ODEarv} and of the corresponding graph limit solution $(u(X_i,\cdot))_{i\in\elts}$ to \eqref{eq:graph_limit} for $N=20$, for the random weighed graph law given by \eqref{eq:example_Garaschelli_xy}.}
\label{Fig:Blinking_Garlaschelli_xy_Convergence}
\end{figure}

\begin{figure}[h!]
\centering
\includegraphics[width = 0.45\textwidth]{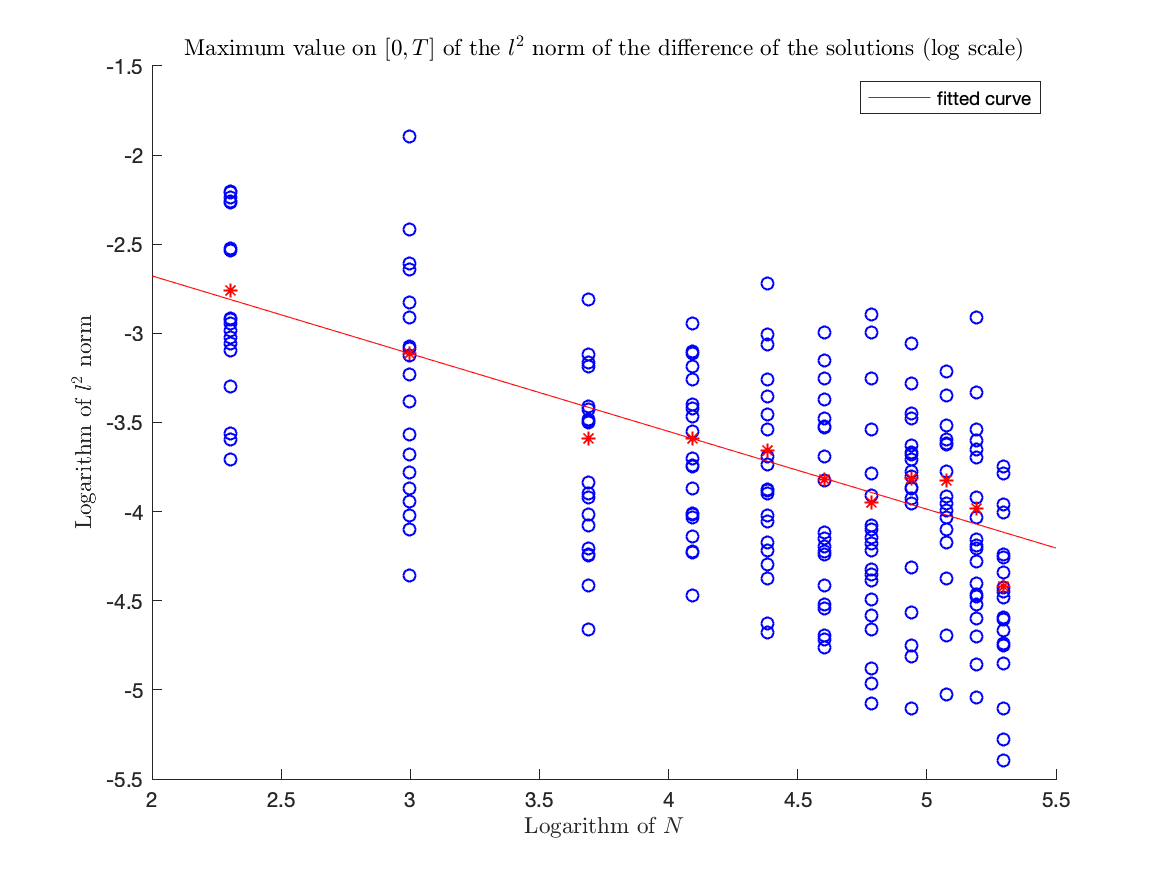}
\caption{Quantification of the convergence of the microscopic systems \eqref{eq:ODEbl} given by $\sup_{t \in [0,T]}  \|u^{N,\varepsilon}(t) - \mathbf{P}_{\tilde{X}_N} u(\cdot,t) \|_{2,N}$ for a fixed $\varepsilon=0.1$, with 20 runs for each value of $N$, for the weighted random graph law \eqref{eq:example_Garaschelli_xy} (logarithmic scale). }
\label{Fig:Blinking_Garlaschelli_L2norm}
\end{figure}

\newpage

\section{Appendix}

\subsection{Computations and bounds of variances in the random-random case } \label{App:1}

We provide the proof of Lemma \ref{Lemma:sigmaY}, which we recall for completeness.
\begin{lemma_appendix}{2}
Given a solution $u$ to the integro-differential equation \eqref{eq:graph_limit}, we consider the collection of random variables $(Y^N_i)_{i\in\elts}$, defined for all $i\in\elts$ by $Y^N_i(t):=\sqrt{N} Z^N_i(t)$, where $Z^N_i(t)$  is given by \eqref{eq:Z}.
Then for all $i\in\elts$, its variance satisfies:
$
\V\croch{(Y_i^N)} = \E\croch{(Y_i^N)^2} = \sz^2(t),
$
where
\[
\sz^2(t) := \iint_{I^2} \bw(x,y) D(u(y,t)-u(x,t))^2 dx dy 
-  \int_I \left( \int_I  \bw(x,y) D(u(y,t) - u(x,t)) \right)^2 dx.
\]
For the random variables $(Y^N_i)^2$, it holds for all $i\in\elts$,
\[
\E\croch{(Y_i^N)^4} = 3\sz^4(t) + O(\frac{1}{N}) \qquad \text{ and }  \qquad
\V\croch{(Y_i^N)^2} = 2\sz^4(t) + O(\frac{1}{N}).
\]
\end{lemma_appendix}

\begin{proof}
Let us start with the computations of $\V\croch{Y_i^N}$. Denoting $\eta_{ij} := \xi_{ij}D(u(t,X_j)-u(t,X_i))$ and
$
\mu(X_i) := \mathbb{E}\left[ \eta_{ij} | X_i\right] = \int_I \bw(X_i,y) D(u(t,y)-u(t,X_i)) dy,
$
the random variable $Y_i^2(t)$ can be rewritten as
\begin{equation*}
Y_i^2(t) =\frac{1}{N} \left(\sum_{j=1}^N (\eta_{ij}-\mu(X_i))\right)^2.
\end{equation*}
Then the term  $\E \croch{(Y_i^N)^2}$ can then be computed as follows:
\[
\begin{array}{rcl}
\E\croch{(Y_i^N)^2}&  =&  \E \croch{\E\croch{(Y_i^N)^2\,|\,X_i} } \\
& =& \dsp \frac{1}{N} \E \croch{\E\croch{\sum_{1\leq j,k\leq N} (\eta_{ij} - \mu(X_i)) (\eta_{ik}  - \mu(X_i)) \,|\,X_i}}\\
& =& \dsp\E \croch{\E\croch{ \sum_{j=1}^N( \eta_{ij} - \mu(X_i))^2\,|\,X_i}}  + \frac{2}{N}   \E \croch{\E\croch{ \sum_{1 \leq j <k \leq N }    (\eta_{i{j}} - \mu(X_i))(\eta_{i{k}} - \mu(X_i)) \,|\,X_i }}.
\end{array}
\]
By independence, we deduce that 
\[
\begin{array}{rcl}
\E\croch{(Y_i^N)^2}&  =&  \dsp\E \croch{\E\croch{ \sum_{j=1}^N( \eta_{ij} - \mu(X_i))^2\,|\,X_i}}.
\end{array}
\]
 Since, we have 
 \[
\begin{split}
\E \croch{\pare{\eta_{ij}-\mu(X_i)}^2 | X_i} =
\E \croch{\eta_{ij}^2 |X_i} - 2 \mu(X_i) \E \croch{\eta_{ij} |X_i} +\mu(X_i)^2 =\E \croch{\eta_{ij}^2 |X_i} - \mu(X_i)^2,
\end{split}
\]
we deduce that $\E\croch{(Y_i^N)^2}=  \sigma_Y^2$, where
$$\sigma_Y^2(t) :=   \iint_{I^2} \int_{\R_+} w^2  D(u(y,t)-u(x,t))^2 q(x,y;dw) dx dy  -  \int_I \left(\int_{\R_+} \int_I \bw(x,y) D(u(y,t) - u(x,t))  \right)^2. $$

We continue with the computations of $\V\croch{(Y_i^N)^2}$. We have
\begin{equation}\label{eq:sigmaY}
\V\croch{(Y_i^N)^2} = \E\croch{((Y_i^N)^2-\sigma_Y^2)^2} = \E \croch{(Y_i^N)^4} - \sigma_Y^4.
\end{equation}
The term  $\E \croch{(Y_i^N)^4}$ can then be computed as follows:
\[
\begin{split}
&\E\croch{(Y_i^N)^4\,|\,X_i}  = \E\croch{N^2 \big(\frac{1}{N} \sum_{j=1}^N \eta_{ij} - \E(\eta_{ij}\,|\,X_i) \big)^4\,|\,X_i} 
=\E\croch{\frac{1}{N^2}\big( \sum_{j=1}^N( \eta_{ij} - \mu(X_i)) \big)^4\,|\,X_i}  \\
 = & \E\croch{\frac{1}{N^2} \sum_{j_1=1}^N  \sum_{j_2=1}^N  \sum_{j_3=1}^N  \sum_{j_4=1}^N(\eta_{i{j_1}} - \mu(X_i))(\eta_{i{j_2}} - \mu(X_i))(\eta_{i{j_3}} - \mu(X_i))(\eta_{i{j_4}} - \mu(X_i)) \,|\,X_i } \\
 = & \E\croch{\frac{1}{N^2} \sum_{j=1}^N (\eta_{i{j}} - \mu(X_i))^4 \,|\,X_i } + 3 \E\croch{\frac{1}{N^2} \sum_{j_1=1}^N  \sum_{j_2\neq j_1}  (\eta_{i{j_1}} - \mu(X_i))^2(\eta_{i{j_2}} - \mu(X_i))^2 \,|\,X_i }.
\end{split}
\]
For the first term, it holds
\[
\begin{split}
\E \croch{\pare{\eta_{ij}-\mu(X_i)}^4 | X_i} =
\E \croch{\eta_{ij}^4 |X_i} - 4 \mu(X_i) \E \croch{\eta_{ij}^3 |X_i} + 6 \mu(X_i)^2 \E \croch{\eta_{ij}^2 |X_i} -3 \mu(X_i)^4 .
\end{split}
\]
Using Hypothesis \ref{hypo_moment}, each of these four quantities is bounded by $(M \|D\|_{L^\infty})^4$, so 
\[
{A_N:=\E}\croch{\E\croch{\frac{1}{N^2} \sum_{j=1}^N (\eta_{i{j}} - \mu(X_i))^4 \,|\,X_i }} = {\frac{1}{N} \E\croch{\E\croch{ (\eta_{i{j}} - \mu(X_i))^4 \,|\,X_i }} := \frac{\mu_4}{N}} = O\left(\frac{1}{N}\right).
\]
Secondly, by independence of the random variables that we consider, it holds
\[
\begin{split}
  {B_N:=} &\; \E\E\croch{\frac{1}{N^2} \sum_{j_1=1}^N  \sum_{j_2\neq j_1}  (\eta_{i{j_1}} - \mu(X_i))^2(\eta_{i{j_2}} - \mu(X_i))^2 \,|\,X_i }  
 = \frac{N(N-1)}{N^2} \E\E\croch{(\eta_{i{j}} - \mu(X_i))^2\,|\,X_i }^2 \\
  = &\;  \frac{N(N-1)}{N^2} \E\croch{ (\E\croch{\eta_{i{j}}^2\,|\,X_i } - \mu(X_i)^2)}^2 =  {(1-\frac{1}{N})\sigma_Y^4}.
\end{split}
\]
We then have 
\begin{equation}
\E\croch{(Y_i^N)^4} = \E\E\croch{(Y_i^N)^4\,|\,X_i} = {A_N+3B_N = 3 (1-\frac{1}{N})\sigma_Y^4 + \frac{\mu_4}{N} }.
\end{equation}
\end{proof}

\subsection{Computations and bounds of variances in the random-deterministic case } \label{App:2}

Denoting $W_{ij}^N:= \bw(x_i^N,x_j^N)$, we study the convergence of the random variable
\[
\dsp \tZ_i^N := \frac{1}{N}   \sum_{j=1}^N \pare{ \xi_{ij}  - W_{ij}^N} D(\vjn - \vin).
\]
We denote $\tY_i^N := \sqrt{N} \tZ_i^N$, which allows us to write  $\|\tZ^N\|^2 = \frac{1}{N}\sumi (\tY_i^N)^2$.
Then applying the Bienaymé-Chebyshev inequality, it holds
\[
\PP\croch{ \left| \frac{1}{N}\sumi (\tY^N_i)^2 - \E\croch{\frac{1}{N}\sumi (\tY^N_i)^2} \right| \geq 1}\leq \V\croch{\frac{1}{N}\sumi (\tY^N_i)^2}
\]
We compute the expectation and variance of $(\tY^N_i)^2$.
Denoting $\gamma_{ij}^N := \xi_{ij} f_{ij}^N$ where $f_{ij}^N := D(v_j^N -v_i^N)$, it holds
 $$\begin{array}{rcl}
  \E \croch{(\tY_i^N)^2 } & =& \dsp \frac{1}{N} \E \croch{\sum_{1\leq j,k \leq N} \pare{(\xi_{ij} - \wijn)\fijn}  \pare{(\xi_{ik} - \wijk)\fijk}}  \\
&  =& \dsp \frac{1}{N} \E \croch{\sum_{1\leq j \leq N} \pare{(\xi_{ij} - \wijn)\fijn}^2} + \frac{2}{N} \E \croch{\sum_{1\leq j<k \leq N} \pare{(\xi_{ij} - \wijn)\fijn}  \pare{(\xi_{ik} - \wijk)\fijk}}\\
 & =& (\sigma_i^N)^2
 \end{array}$$
 where $$\begin{array}{rcl} 
(\sigma_i^N)^2 & := & \dsp \frac{1}{N} \E \croch{\sum_{1\leq j \leq N} \pare{(\xi_{ij} - \wijn)\fijn}^2} \\
 & = & \dsp \frac{1}{N} \sum_{j=1}^N (\fijn)^2 \croch{\int_{\R_+} w^2 q(x_i^N; x_j^N;dw) - \pare{\int_{\R_+} w q(x_i^N; x_j^N;dw) }^2} 
 \leq K^2 M^2.
  \end{array}$$
  
Similarly, 
   $$\begin{array}{rcl}
 \E \croch{(\tY_i^N)^4}& =& \dsp  \frac{1}{N^2} \E \croch{\sum_{1\leq j_1,j_2,j_3,j_4 \leq N} ((\xi_{ij_1} - W_{ij_1}^N)f_{ij_1}^N) \dots ((\xi_{ij_4}^N - W_{ij_4}^N)f_{ij_4}^N ) }\\
  & = &\dsp \frac{6}{N^2}\sum_{1\leq j < k \leq N} \E \croch{((\xi{ij}^N - W_{ij}^N)f_{ij}^N)^2} \E \croch{((\xi_{ik}^N - W_{ik}^N)f_{ik}^N)^2} \\
  & & +\dsp \frac{1}{N^2} \sum_{1\leq j \leq N}  \E \croch{((\xi_{ij}^N -  W_{ij}^N)f_{ij}^N)^4} \\
  & = & \dsp \frac{6 N (N-1)}{2 N^2} (\sigma_i^N)^4 + O(N^{-1}) =  3(\sigma_i^N)^4 + O(N^{-1}) .
 \end{array}$$
 Thus, $\V \croch{(\tY_i^N)^2}= 2(\sigma_i^N)^4 + O(N^{-1}) \leq 3 K^4M^4$ 
 for $N$ large enough.\\


\textit{Acknowledgements.} The authors are grateful to Pierre Monmarché and Thierry Paul for helpful remarks and suggestions  on how to improve our paper. 


\bibliographystyle{abbrv}
\bibliography{biblio}

\end{document}